\newcommand{\stkout}[1]{\ifmmode\text{\sout{\ensuremath{#1}}}\else\sout{#1}\fi}
\newcommand{\FLO}{{\rm FLO}}
\newcommand{\p}{\partial}
\newcommand{\D}{\mathbf D}
\newcommand \B{\mathcal B}
\newcommand\R{\rr}
\newcommand{\rr}{\mathbb{R}}
\newcommand{\nn}{\mathbb{N}}
\newcommand{\singsupp}{{\rm singsupp}}
\newcommand{\oo}{{\mathcal O}}
\newcommand{\T}{{\bf T}}
\newtheorem{theorem}{Theorem}
\newtheorem{lemma}[theorem]{Lemma}
\newtheorem{proposition}[theorem]{Proposition}
\newtheorem{corollary}[theorem]{Corollary}
\theoremstyle{definition}
\newtheorem{definition}[theorem]{Definition}
\numberwithin{equation}{section}
\numberwithin{theorem}{section}
\DeclareMathOperator \spn {span}
\DeclareMathOperator \WF {WF}
\DeclareMathOperator \supp {supp}
\DeclarePairedDelimiter\pair{\langle}{\rangle}
\title[Single Observer Determination of Lorentzian Manifold]{Determining Lorentzian manifold from non-linear wave observation at a single point}
\author[]{Medet Nursultanov}
\address {Department of Mathematics and Statistics, University of Helsinki, Helsinki, Finland}
\email{medet.nursultanov@gmail.com}
\author[]{Lauri Oksanen}
\address {Department of Mathematics and Statistics, University of Helsinki, Helsinki, Finland}
\email{lauri.oksanen@helsinki.fi}
\author[]{Leo Tzou}
\address {Korteweg-de Vries Institute for Mathematics, University of Amsterdam, Amsterdam, Netherlands}
\email{leo.tzou@gmail.com}
\subjclass[2000]{53C20, 35R30, 35L70}
\keywords{wave front propagation, non-linear wave interaction, geodesics, Lorentzian geometry, inverse problems}
\thanks{Corresponding author: M.Nursultanov; E-mail address: medet.nursultanov@gmail.com.}
\begin{document}
\begin{abstract}
We consider an inverse problem for a non-linear hyperbolic equation. We show that conformal structure of a Lorentzian manifold can be determined by the source-to-solution map evaluated along a single timelike curve. We use the microlocal analysis of non-linear wave interaction.

\end{abstract}

\maketitle

\section{Introduction}
Since the introduction of linearization methods for recovering the background geometry from data about solutions of non-linear hyperbolic equations \cite{Kurylev:2018aa}, many works have followed \cite{ultra21,Chen2019,Chen2020,Hoop2019,Hoop2019a,Feizmohammadi2019,Hintz2021,Hintz2020,Kurylev2014,Lassas2017,Tzou2021,Uhlmann2020,Uhlmann2019,UhlmannZhang2022JDE,UhlmannSIAManal2023,Zhang2023}. We also mention works studying inverse problems for non-linear hyperbolic equations \cite{FuYao,Kian,NakamuraVashisthWatanabe2021,Romanov2023,Barreto2020InvProbIm,BarretoStefanov2022,Barreto2020,WangZhou}. For an overview of the recent progress, see \cite{LassasRio2018,UhlmannZhai2021}. In most of these cases the data acquisition geometry roughly consists of sources and measurements taken in a small, albeit open, space-time tube around a timelike curve. There has also been work where the measurements set and source set are disjoint \cite{feizmohammadi2020inverse}, though in these cases the measurements are also taken in open tubes. 

In this work we propose a model which requires less measurements to be taken in comparison to the previous results mentioned above. In particular, we will show that, to recover the background geometric structure, one only needs to measure the solution of a non-linear wave along a single timelike curve. We emphasize, however, that we still need to arrange the sources in an open tubular neighbourhood of this time-like curve.

We now give a precise formulation of our inverse problem. Let $(M,g)$ be a globally hyperbolic, $(1+3)$-dimensional Lorentzian manifold, where the metric $g$ is of signature $(-,+,+,+)$. Global hyperbolicity allows us to write 
\begin{align*}
	&M = \R \times M_0;\\
	&g = \beta(t,x')(- dt^2 + \kappa(t,x')),
\end{align*}
where $(t,x') = (x_0,x_1,x_2,x_3)$ are local coordinates on $M$, $\beta$ is a smooth positive function on $M$, and $(M_0,\kappa(t,\cdot))$ is a Rimannian manifold with the metric depending smoothly on $t\in \mathbb R$.
 For $p\in M$, we denote by $J^+(p)$ and $J^-(p)$ to be the causal past and future, respectively. We consider semilinear wave equation
 \begin{equation}\label{cubic wave}
 	\begin{cases}
 		\Box_g u+ u^3 = f, & \text{on } M,\\
 		u\mid_{ (-\infty, 0)\times M_0}= 0,
 	\end{cases}
 \end{equation}
 where 
 \begin{equation*}
 	\Box_g := (- \det g)^{-\frac{1}{2}} \partial_j ((- \det g)^{\frac{1}{2}}g^{jk}\partial_k)
 \end{equation*}
 is the wave operator on $(M,g)$. We aim to extract geometric information by measuring waves on a single point which is represented as a smooth future pointing timelike curve
 \begin{equation*}
 	\mu :[-1,1] \to M.
 \end{equation*}
 Let $(s^-, s^+)\subset\subset (-1,1)$ and let $\Omega\subset\subset (0,\infty) \times M_0$ be any open set containing $\mu([s^-,s^+])$. For all $f\in C^N_c(\Omega)$, define the single observer source-to-solution map by
$$Lf := \mu^* u_f,$$ 
where $u_f$ is the unique solution to \eqref{cubic wave}. Hence, $L$ represents the measurements of waves produced by sources supported on $\Omega$ and observed at $\mu$. Our main result is the following:

\begin{theorem}
\label{main theorem}
 The operator $f\mapsto Lf$ determines the topological, differential, and conformal structure of $\D:= J^+(\mu(s^-)) \cap J^{-}(\mu(s^+))$.
\end{theorem}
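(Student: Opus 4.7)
The plan is to reduce the inverse problem to a microlocal analysis of three-wave interactions, exploiting the cubic nonlinearity $u^3$ together with the fact that the source set $\Omega$ is open, even though the observation set $\mu([s^-,s^+])$ is only one-dimensional. First I would form a three-parameter family of sources $f=\epsilon_1 f_1+\epsilon_2 f_2+\epsilon_3 f_3$ with each $f_j$ a conormal distribution supported in $\Omega$, and compute the mixed derivative $\partial^3_{\epsilon_1\epsilon_2\epsilon_3}Lf\big|_{\epsilon=0}$. This yields, up to a nonzero constant, the pull-back by $\mu$ of the solution $w$ of $\Box_g w = v_1 v_2 v_3$ with vanishing Cauchy data in the past, where $v_j$ is the linear wave produced by $f_j$. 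Hence $L$ encodes the linear forward map together with the nonlinear interaction operator $(f_1,f_2,f_3)\mapsto \mu^*\square_g^{-1}(v_1 v_2 v_3)$.

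Next I would specialize the $v_j$ to be distorted plane waves conormal to null hypersurfaces $K_j$ with future-pointing null conormal directions $\xi_j\in T^*_{q}M$ chosen so that the $K_j$ intersect transversally at a prescribed interaction point $q\in \mathbf D$, with the covector $\xi_1+\xi_2+\xi_3$ also future-pointing and null. Using the calculus of conormal and paired Lagrangian distributions (as developed in the Kurylev--Lassas--Uhlmann framework), the product $v_1 v_2 v_3$ is a Lagrangian distribution whose wavefront set, after applying the parametrix for $\Box_g$, produces a new singularity of $w$ propagating from $q$ along the null bicharacteristic of $\xi_1+\xi_2+\xi_3$. The earlier ``two-wave'' singularities are either absent or propagate on bicharacteristics already traced by $v_j$ themselves, so the genuinely new singular support of $w$ lies on the future light cone of $q$ (and on the traces of the incoming waves). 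By observing $\mu^*w$ we can detect an arrival time $s\in[s^-,s^+]$ as a singular point of $Lf$ in $s$, and by varying the triple $(\xi_1,\xi_2,\xi_3)$ we sweep out all future null directions at $q$.

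Carrying this out as $q$ ranges over $\mathbf D$ and as we vary the incoming triple, the map $L$ determines, for every $q\in\mathbf D$, the set $\mu^{-1}(\mathcal{L}^+_g(q))\cap[s^-,s^+]$ where $\mathcal{L}^+_g(q)$ is the future null cone at $q$; in particular it determines the earliest observation time function $\mathcal{E}_\mu(q):=\inf\{s\in[s^-,s^+]:\mu(s)\in J^+(q)\}$ together with its past analogue. The final step is to reconstruct $\mathbf D$ from this family of functions. Here I would adapt the earliest-observation-time (or ``boundary distance'') reconstruction: the assignment $q\mapsto \mathcal{E}_\mu(q)$, viewed together with its refined covector data from varying $\xi_1+\xi_2+\xi_3$, embeds $\mathbf D$ into a space of functions on $[s^-,s^+]$, and from the smoothness/crossing structure of this embedding one recovers the topology, smooth atlas, and null geodesic fans at each point, hence the conformal class of $g$.

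The main obstacle I anticipate is the microlocal bookkeeping in the third step: one must verify that the new light-cone singularity produced by the transverse triple intersection is not cancelled in $w$ and that it survives the pull-back by the one-dimensional curve $\mu$. In particular the transversality of $\mu$ to the light cone $\mathcal{L}^+_g(q)$ and the absence of degenerate coincidences between interaction-generated singularities and those already present in the linearization along $\mu$ must be established; this is what restricts the reconstruction to $\mathbf D$ and drives the choice of three-wave (rather than two-wave) interactions despite the cubic nonlinearity being the lowest available.
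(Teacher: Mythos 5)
Your plan correctly identifies the general strategy of multiple linearization and the role of the cubic nonlinearity, and your anticipated obstacle in the final paragraph is exactly the crux. However, the proposal has a genuine gap at the step where you pass from the three-fold linearization to a reconstruction of $\mathbf D$.

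The three-fold linearization $\partial^3_{\epsilon_1\epsilon_2\epsilon_3}Lf|_{\epsilon=0}=\mu^* u_{123}$ tells you only at which parameter values $s\in[s^-,s^+]$ the future light cone of the interaction point $q$ meets the \emph{curve} $\mu$. This is far weaker than the earliest light observation set used in the Kurylev--Lassas--Uhlmann framework, which needs an \emph{open} observation set in order for the map $q\mapsto\mathcal{E}_V(q)$ to embed a neighbourhood of $q$; restricted to a one-dimensional $\mu$, the light observation set per point $q$ is generically a finite set of times, and the ``embeds $\mathbf D$ into a space of functions on $[s^-,s^+]$'' step has no chance of working. You also cannot recover, from $\mu^*u_{123}$ alone, the ``refined covector data'' you invoke: singularities pulled back to a $1$-dimensional curve do not tell you which covector of $L^{*,+}_qM$ carried the singularity, only that some null ray from the light cone hit $\mu$. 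That directional information is exactly what is needed, and your proposal does not explain how to obtain it.

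The paper overcomes this by going to fifth- and seventh-order linearizations with additional auxiliary sources $f_0,f_4,f_5$ (smooth bump functions $\chi_a$ supported near a point) and $f_6$ (a conormal distribution to the worldline $\T_{\tilde x'}$). The product $u_0u_4u_{123}$ microlocalizes the three-wave singularity near a chosen point $x_0$, and the product $u_5u_6u_{01234}$ at a point $\tilde x$ on the null geodesic through a chosen covector $\xi_0$ uses the worldline conormal $u_6$ to \emph{redirect} that singularity onto a null geodesic guaranteed to hit $\mu$ at $\hat x(\tilde x)$ (Lemma \ref{lem_ximu}). In this way the single observer does detect whether $\xi_0\in\WF(u_{123})$, i.e.\ whether the light cone of $q$ passed through $x_0$ in direction $\xi_0$, which is the precise covector information missing from your scheme. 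The geometric invariant so recovered is the three-to-one scattering relation of \cite{feizmohammadi2020inverse}, and the theorem is then an immediate corollary of Proposition \ref{R is scattering relation} together with the uniqueness result of that paper, rather than of an earliest-time/boundary-distance reconstruction. Your proposal omits both the mechanism for recovering directional information at a single-point observer and the passage through the three-to-one scattering relation, and neither is a formality.
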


We now provide a brief outline of our strategy. For $\epsilon = (\epsilon_0, \cdots, \epsilon_6)$, we take
\begin{equation*}
	f_{\epsilon} = \sum_{j=0}^6 \epsilon_j f_j
\end{equation*}
to be source and $u$ to be corresponding solution to \eqref{cubic wave}. Then it is easy to see that the first order linearization, $u_{j} := \partial_{\epsilon_j} u\mid_{\epsilon = 0}$, will solve the linear wave equation with source $f_j$. It is also easy to see that $u_{123}$ is also the solution of the linear wave equation but now with source $u_1u_2u_3$. It was first observed by \cite{Kurylev:2018aa} that, due to propagation of singularity, having products of the previous linearization acting as the source allows us to recover data about broken lightrays leaving then returning to the tubular neighbourhood $\Omega$.  This technique is often called multiple linearization.

Our situation is more challenging as it only provides us information about return rays along a single timelike curve, which on its own is not sufficient to deduce the background geometry. To overcome this, we introduce even higher order linearizations $u_{123jk}$ where $j,k \in\{0,4,5,6\}$ so that $u_{123}u_j u_k$ are now source terms. If we choose $f_j$ for $j\in \{0,4,5,6\}$ appropriately so that $u_0$, $u_4$, $u_5$, and $u_6$ are supported in the right places, we can make it so that information always get propagated back to the timelike curve $\mu([s_-,s_+])$. From this we obtain geometric information which is encoded in the Three-to-One Scattering Relation introduced in \cite{feizmohammadi2020inverse}. It was shown in \cite{feizmohammadi2020inverse} that this information uniquely determines the background geometry up to a conformal factor.

A special case of Theorem \ref{main theorem} for the case when the Lorentzian metric is ultrastatic was done in \cite{Tzou2021} where the geometric structure of a Riemannian manifold was recovered via measurement at a single point. In \cite{LassasLiimatainenPotenciano-MachadoTyni2022}, it was noted that for the hyperbolic case, it might be enough to measure the Dirichlet-to-Neumann map integrated against a suitable fixed function. Similar type of single point inverse problem using non-linearity in the elliptic setting was considered in \cite{salo2023inverse}. Let us also mention that linearization technique was first used in the context of elliptic inverse problem in \cite{FeizmohammadiOksanen2020} and \cite{LassasLiimatainenLinSalo2021}.

All the aforementioned works employ non-linearity as a fundamental instrument, whereas the corresponding problems for linear cases remain unsolved. This is due to the lack of uniqueness results for the linear case. Currently, established uniqueness results for linear hyperbolic equations with vanishing initial data are based on Tataru’s unique continuation theorem \cite{Tataru1995,Tataru1999}. Consequently, these results require the coefficients to be constant or real-analytic in the time variable.  We mention works \cite{BelishevKurylev1992,Helin_Lassas_Oksanen_Saksala2018,LassasNursultanovOksanenYlinen2023,LassasOksanen2010,LassasUhlmann2001} where the background geometry was recovered form solution data of linear wave equation.

\section{Notation}
Let $(M,g)$ be a globally hyperbolic, $1+3$ dimensional Lorentzian manifold. Global hyperbolicity allows us to write $M = \R \times M_0$ and $g = \beta(t,x')(- dt^2 + \kappa(t,x'))$ for a family of Riemannian metrics $\kappa(t,\cdot)$ on $M_0$. Endow $M$ with a Riemannian metric $G$ and we use the same letter to denote the Sasaki metric on $T^*M$.

Assuming that $(M,g)$ is time-oriented enables us to establish the direction of time and define time-like and causal paths that point towards the future and the past. We recall that a smooth path $\mu:(a,b) \rightarrow M$ is timelike if $g(\dot{\mu}, \dot{\mu}) < 0$ on $(a,b)$. We say that $\mu$ is causal if $g(\dot{\mu}, \dot{\mu}) \leq 0$ and $\dot{\mu} \neq 0$. For $p$, $q\in M$, $p\ll q$ means that they are distinct and there is a future pointing timelike path from $p$ to $q$. Similarly, $p < q$ means that they are distinct and there is a future pointing causal path from $p$ to $q$. We say that $p\leq q$ if $p=q$ or $p<q$. The chronological future of $p\in M$ is the set
$$
    I^+(p) := \{q\in M: p\ll q\}
$$
and causal future of $p$ is the set
$$
    J^+(p) := \{q\in M: p\leq q\}.
$$
Analogically, we set chronological past, $I^-(p)$, and causal past, $J^-(p)$. For a set $A\subset M$, we denote 
$$
    J^{\pm}(A) := \bigcup_{p\in M} J^{\pm}(p).
$$
For $W\subset M$, let
\begin{equation*}
    L^{*,+}W:=\bigcup_{p\in W} L_p^{*,+}M \subset T^*W
\end{equation*}
the bundle of future pointing lightlike covectors. Analogically, we define $L^{*,-}W$. The projection from the cotangent bundle $T^*M$ to the base point
of a vector is denoted by $\pi: T^*M \rightarrow M$.

For $p$, $q\in M$, we define the separation function as
\begin{equation*}
    \tau(p,q):= 
    \begin{cases}
        \sup_{\alpha} \int_0^1 \sqrt{g(\dot{\alpha},\dot{\alpha})} & \text{if } p<q,\\
        0 & \text{otherwise},
    \end{cases}
\end{equation*}
where the supremum is taken over all piecewise smooth causal paths $\alpha:[0,1]\rightarrow M$ from $p$ to $q$. 

For $x=(t,x')$ and $\eta \in L_x^{*,+}M$, let $s(\eta)\in (0,\infty]$ be the maximal value  for which geodesic $\gamma_\eta: [0,s(\eta)) \mapsto M$ is defined. We define the cut function by 
\begin{equation*}
	\rho(\eta) := \sup \{ s\in [0, s(\eta)):\; \tau(x, \gamma_s(\eta)) = 0\}.
\end{equation*}
We define $\rho(\eta)$ also for $\eta \in L_x^{*,-}M$ by the above expression but with respect to the opposite time orientation. 

\section{A Three-to-One Scattering Relation}

We will prove Theorem \ref{main theorem} by using the notion of a three-to-one scattering relation defined in \cite{feizmohammadi2020inverse}. Before providing the definition, we introduce necessary notations. 
Let $H$ denote the Hamiltonian vector field and $\Sigma(\Box_g)$ denote the characteristic set associated with $\sigma[\Box_g]$, that is 
\begin{equation*}
	H(x,\xi) := 2g^{ij}\xi_j\partial_{x^i} - (\partial_{x^i}g^{jk}\xi_j\xi_k)\partial_{\xi_i},
\end{equation*}
\begin{equation*}
	\Sigma(\Box_g) := \{(x,\eta)\in T^*M\setminus 0:  \; (\eta,\eta)_g = 0\}.
\end{equation*}
We denote by $\Phi_s$ the flow of $H$. For the set $K\subset \Sigma(\Box_g)$, we define the future flowout by
\begin{equation*}
	\FLO^+(K) = \left\{ (y, \eta)\in \Sigma(\Box_g): \; (y,\eta) = \Phi_s(x,\xi), \; s\in \mathbb{R}, (x,\xi)\in K, y \geq x \right\}.
\end{equation*}
Past flowout $\FLO^-(K)$ is defined analogically.

Now we are ready to recall the definition of the three-to-one scattering relation
introduced in \cite{feizmohammadi2020inverse}:

\begin{definition} 
	Let $\Omega \subset M$ be open and nonempty. 
	A relation $\mathcal R \subset  (L^{*,+} \Omega)^4$ is a three-to-one scattering relation if the following two conditions hold:
	\begin{itemize}
		\item[(R1)] If $(\xi_0, \xi_1, \xi_2, \xi_3) \in \mathcal R$
		then 
		\begin{align*}
			\pi\circ\FLO^-(\xi_0) \cap \bigcap_{j=1}^3 \pi\circ \FLO^+(\xi_j) \neq\emptyset.
		\end{align*}
            \item[(R2)] The set $\mathcal{R}$ contains all $(\xi_0, \xi_1, \xi_2, \xi_3) \in  (L^{*,+} \Omega)^4$ which satisfy 
            \begin{itemize}
			\item[(a)] The bicharacteristics through $\xi_j$ are distinct, that is, $\FLO(\xi_j) \ne \FLO(\xi_k)$ when $j \ne k$.
			\item[(b)] There are $y \in M$, $s_0 \in (-\rho(\xi_0), 0)$,
			$s_j \in (0, \rho(\xi_j))$, $j=1,2,3$, such that 
			$y = \gamma_{\xi_j}(s_j)$ for $j=0,1,2,3$.
			\item[(c)] Writing $\eta_j$ for the covector version of 
			$\dot \gamma_{\xi_j}(s_j)$, it holds that $\eta_0 \in \spn(\eta_1, \eta_2, \eta_3)$.
		\end{itemize}
	\end{itemize} 
\end{definition} 
Relation $(\mathrm{R1})$ means that if $(\xi_0,\xi_1,\xi_2,\xi_4)\in \mathcal{R}$ it is necessary for the future pointing geodesics of $\xi_1$, $\xi_2$, $\xi_3$, and the past pointing geodesic of $v_0$, to intersect at some point $y$. While, relation $(\mathrm{R2})$ means that for $(\xi_0,\xi_1,\xi_2,\xi_4)\in \mathcal{R}$ it is sufficient if the geodesics of $\xi_1$, $\xi_2$, $\xi_3$, and $\xi_0$ intersect at some point $y$ before their cut points and the velocity at $y$ of the geodesic corresponding to $\xi_0$ belongs to the span of velocities at $y$ corresponding to $\xi_1$, $\xi_2$, and $\xi_3$; see Figure \ref{fig:enter-label}.

\begin{figure}
    \centering
    \includegraphics[width=0.5\textwidth]{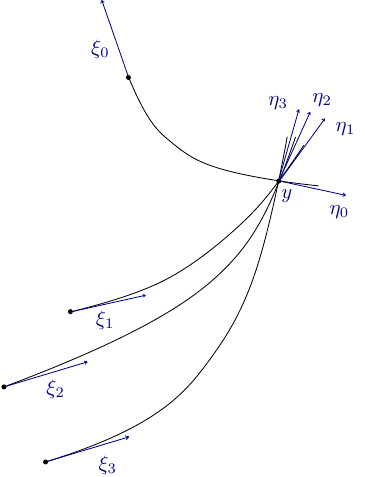}
    \caption{}
    \label{fig:enter-label}
\end{figure}

Note that since we can always consider smaller $\Omega$, we will assume without loss of generality that 
\begin{eqnarray*}
	\Omega \subset\subset  J^-(\mu(1))\backslash J^-(\mu(-1)).
\end{eqnarray*}
Therefore, for every $x = (t, x') \in \Omega$, there exists a unique $\xi^{\mu}(x) \in L^+_{x} M/\R^+$ such that $\gamma_{\xi^{\mu}(x)}([0, \rho(\xi^\mu(x)))) \cap \mu([-1,1])$ is non-empty. Furthermore, by definition of $\rho(\cdot)$ this intersection contains exactly one element which we will denote by 
\begin{eqnarray}
	\label{xhat(x0)}
	\hat x (x) := \gamma_{\xi^{\mu}(x)}([0, \rho(\xi^\mu(x)))) \cap \mu([-1,1]).
\end{eqnarray}

Denote by $\T_{x'}\subset M$ to be the codimension $3$ submanifold $\{(t,x')\mid t\in (-\infty,\infty)\}$. Clearly, for any lightlike covector $\xi\in L^{*,+}_{x_0}M$, we have that 
$$ T^*_{x}M = \spn{\xi} \oplus N^*_{x} \T_{x'}.$$
Therefore, we have the following
\begin{lemma}\label{lem_ximu}
Let $\xi\in L^{*,+}\Omega$ and $x=\pi(\xi)$, then there exists a unique $\xi'\in N^*_{x}\T_{x'}/\R^+$ and $c\in \mathbb{R}$ such that  
\begin{equation*}
	\xi^\mu(x) = c\xi + \xi'.
\end{equation*}
If $c\xi = \xi^\mu(x)$ then $\xi'$ here is understood to be the zero covector. 
\end{lemma}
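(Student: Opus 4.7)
My plan is to deduce the lemma as an immediate consequence of the direct-sum decomposition $T^*_x M = \spn\{\xi\} \oplus N^*_x \T_{x'}$ recorded in the paragraph just above the lemma. There are really only two things to do: first, justify that decomposition (which the excerpt declares ``clear'' but deserves a sentence), and second, apply it to a representative of $\xi^\mu(x)$ and verify that the resulting $\xi'$ is well-defined modulo $\R^+$.

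For the decomposition itself, I would work in the coordinates $(t,x')$ coming from the global-hyperbolicity splitting $M = \R \times M_0$. In these coordinates $T_x \T_{x'} = \spn\{\partial_t\}$, so $N^*_x \T_{x'}$ is exactly the three-dimensional subspace of covectors annihilating $\partial_t$, i.e.\ those with vanishing $dt$-component. The transversality I need is that any $\xi \in L^{*,+}_x M$ has nonzero $dt$-component, which follows from the form $g^{-1} = \beta^{-1}(-\partial_t \otimes \partial_t + \kappa^{ij} \partial_i \otimes \partial_j)$ together with positive-definiteness of $\kappa$: writing $\xi = \xi_0\, dt + \xi_i\, dx^i$, the lightlike condition gives $\xi_0^2 = \kappa^{ij}\xi_i\xi_j$, so $\xi_0 = 0$ would force $\xi = 0$. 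Hence $\spn\{\xi\} \cap N^*_x \T_{x'} = \{0\}$, and a dimension count yields the direct sum.

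With the decomposition in hand, I would fix any representative $\tilde\xi \in L^{*,+}_x M$ of the class $\xi^\mu(x)$ and write $\tilde\xi = c\xi + \xi'$ with a unique $c \in \R$ and a unique $\xi' \in N^*_x \T_{x'}$, uniqueness being immediate from the direct-sum property. Rescaling the representative $\tilde\xi \mapsto \lambda\tilde\xi$ with $\lambda > 0$ rescales both $c$ and $\xi'$ by $\lambda$, so the equivalence class $[\xi'] \in N^*_x \T_{x'}/\R^+$ depends only on the $\R^+$-class $\xi^\mu(x)$. The degenerate case $\xi' = 0$ corresponds to $\xi^\mu(x)$ being a positive multiple of $\xi$ (the sign is positive because both covectors are future-pointing lightlike), and the convention in the statement absorbs this case.

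There is no real obstacle here; the whole argument is a couple of lines of linear algebra, and the only substantive point is the transversality of $\spn\{\xi\}$ and $N^*_x \T_{x'}$, which ultimately rests on the fact that a lightlike covector cannot be orthogonal to the time direction in the slicing $M = \R \times M_0$.
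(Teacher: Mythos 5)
Your proposal is correct and takes essentially the same route as the paper, which simply records the decomposition $T^*_x M = \spn\{\xi\} \oplus N^*_x \T_{x'}$ as ``clear'' and states the lemma as an immediate consequence; you have merely filled in the coordinate computation showing a lightlike covector has nonvanishing $dt$-component, and the (routine) check that the $\R^+$-class of $\xi'$ is well-defined.
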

The last lemma allows us to define the map
\begin{align}\label{def_nu}
    & \nu: L^{*,+}_{x}\Omega/\R^+ \to N^*_{x}\T_{x'}/\R^+,\\
    \nonumber & \nu (\xi) := \xi'.
\end{align}

Given an ordered quadruple of covectors $(\xi_0, \xi_1, \xi_2, \xi_3) \in( L^{*,+}\Omega)^4$ with $\pi(\xi_0) \in\bigcap_{j=1}^3 J^+(\pi(\xi_j))$, we construct the following distributions. In what follows, if $\xi\in L^{*,+}M$ and $h>0$ we denote
\begin{align}\label{def_B}
    \B_h(\xi) := \{ \lambda \eta \in T^*M  \mid d_G(\eta, \xi) <h, \lambda \in \R^+\}. 
\end{align}
Moreover, from now on we will use notation $x_j: = \pi(\xi_j)$ for $j=0, 1,2,3.$

\subsection{Distributions $f_1$, $f_2$, and $f_3$}
To begin, we create sources $f$ that are compactly supported near a specific point in $M$ and have a wavefront set that is microlocalized near a single direction.

For  $j=1,2,3$ and $h>0$, let $\omega_j(x,\xi;h)\in \Psi^0(M)$ with a homogeneous of degree zero symbol $\omega_j(x,\xi;h)$ satisfying
\begin{equation*}
    \omega_j(x_j,\pm\xi_j;h) =1, \quad \text{and} \quad \supp(\omega_j(\cdot; \pm\xi_j, h)) \subset  \B_h(\xi_j)\cup \B_h(-\xi_j).
\end{equation*}
Define the compactly supported conormal distributions $f_j\in I(M,    \B_h(\xi_j)\cup \B_h(-\xi_j))$
\begin{eqnarray}
	\label{fj}
	f_j(\cdot; \xi_j, h) :=  \omega_j(x,D;h)\langle D\rangle^{-N}\delta_{x_j}
\end{eqnarray}
where $\delta_{x_j}$ is the Dirac delta function and $\langle D\rangle$ is an elliptic classical psedodifferential operator of order $1$. Here, $N>0$ is large enough so that $f_j\in C_c^{N'}$ is smooth as we need. Then the following result holds
\begin{lemma}
\label{linear wave}
Suppose for each $h>0$, $u_j(\cdot ; h)$ is the solution to
\begin{equation*}
    \begin{cases}
        \Box_g u_j(\cdot;\xi_j, h) = f_j(\cdot;\xi_j, h),\\
        u_j\arrowvert_{(-\infty, 0)\times M_0} = 0
    \end{cases}
\end{equation*}
then 
$$u_j\in I(T^*_{x_j}M, \FLO^+((\B_h(\xi_j)\cup \B_h(-\xi_j))\cap L_{x_j}^{*,+} M)).$$
Furthermore, $\sigma[u_j](\pm\xi) \neq 0$ for all $\xi\in \FLO^+(\xi_j)$.
\end{lemma}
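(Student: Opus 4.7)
The plan is to represent $u_j = E^+ f_j$, where $E^+$ denotes the causal (forward) fundamental solution of $\Box_g$, and then to apply the intersecting Lagrangian calculus of Melrose--Uhlmann and Greenleaf--Uhlmann in order to deduce both the Lagrangian membership of $u_j$ and the non-vanishing of its principal symbol on the flowout. To begin, I would check that $f_j$ is a properly microlocalized conormal distribution in $I(M, T^*_{x_j}M)$: the Dirac mass $\delta_{x_j}$ is conormal with respect to the fiber $T^*_{x_j}M$ with constant non-vanishing principal symbol; $\langle D\rangle^{-N}$ is elliptic classical and only shifts the order; and $\omega_j(x,D;h)$ preserves the conormal class while restricting its microsupport to $\B_h(\xi_j)\cup\B_h(-\xi_j)$. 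Because $\omega_j(x_j,\pm\xi_j;h)=1$, the principal symbol of $f_j$ is non-zero at $(x_j,\pm\xi_j)$.

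Next, I would invoke the classical description of the forward parametrix: the Schwartz kernel of $E^+$ lies in the intersecting Lagrangian class associated with the pair $(N^*\Delta,\Lambda^+)$ in $T^*(M\times M)\setminus 0$, where $\Delta\subset M\times M$ is the diagonal and $\Lambda^+$ is the forward Hamiltonian flowout of $N^*\Delta\cap(\Sigma(\Box_g)\times\Sigma(\Box_g))$ (Duistermaat--H\"ormander, Melrose--Uhlmann). Composing with the conormal input $f_j$ and tracking the two Lagrangian pieces of $E^+$ separately produces a paired Lagrangian distribution associated with $T^*_{x_j}M$ (inherited from $f_j$) and with the forward Hamiltonian flowout of $T^*_{x_j}M\cap\Sigma(\Box_g)$ (from propagation), each microlocalized by the cutoff carried by $f_j$. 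Since only lightlike directions at $x_j$ lie on $\Sigma(\Box_g)$, the flowout piece is exactly $\FLO^+((\B_h(\xi_j)\cup\B_h(-\xi_j))\cap L^{*,+}_{x_j}M)$; combined with the support condition $u_j|_{(-\infty,0)\times M_0}=0$, which forces the use of $E^+$ rather than any other parametrix, this gives the asserted Lagrangian membership.

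For the principal symbol, away from $x_j$ one has $\Box_g u_j\equiv 0$ microlocally, so $\sigma[u_j]$ solves a linear homogeneous transport equation along the Hamiltonian flow on the flowout component, and its initial value on each bicharacteristic issuing from $x_j$ is prescribed by $\sigma[f_j](\pm\xi_j)$ via the matching condition at the clean intersection of $T^*_{x_j}M$ with the flowout. Since $\sigma[f_j](\pm\xi_j)\ne 0$ and the transport is linear homogeneous, the symbol of $u_j$ cannot vanish anywhere on $\FLO^+(\xi_j)$. The main obstacle is verifying the clean-intersection and transversality hypotheses required for the paired Lagrangian calculus to apply to the composition $E^+ f_j$, so that $u_j$ actually lies in $I(T^*_{x_j}M,\FLO^+(\cdots))$ with the standard symbolic identities in force; these hypotheses follow from the transversality of $T^*_{x_j}M$ to $N^*\Delta$ and from the clean flowout structure of $\Lambda^+$, but the bookkeeping must be done carefully to close the argument.
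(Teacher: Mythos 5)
Your argument via the Melrose--Uhlmann intersecting Lagrangian calculus is exactly the machinery the paper implicitly relies on: the lemma is stated without proof, but Proposition~6.6 and relation (6.7) of Melrose--Uhlmann are cited for precisely this purpose in the proof of Proposition~\ref{boomerang}. The sketch you give --- writing $u_j = E^+ f_j$, invoking the paired Lagrangian structure of the forward fundamental solution, and deducing nonvanishing of $\sigma[u_j]$ along $\FLO^+(\xi_j)$ from the matching and transport equations --- is correct and supplies the details the paper leaves implicit.
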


\subsection{Distributions $f_0$, $f_4$, $f_5$, and $f_6$}

For each fixed $h>0$ let $a>0$ be a parameter so that $0<a<h$. We let $\chi_a(\cdot; x_0)\in C^\infty_c(B_G(x_0;a))$ satisfy $\chi_a(x_0  ;x_0) = 1$. Define 
\begin{eqnarray}
\label{f0f4}
f_0(x; x_0,a) = f_4(x; x_0,a) = \Box_g \chi_a(x;x_0)
\end{eqnarray}
 so that $\chi_a(\cdot;x_0)$ is the unique solution to
 \begin{equation*}
     \begin{cases}
         \Box_g u (\cdot; x_0,a) = f_0(\cdot; x_0,a) = f_4(\cdot; x_0,a),\\
         u\arrowvert_{(-\infty, 0)\times M_0} = 0.
     \end{cases}
 \end{equation*}

For $\tilde x = (\tilde t , \tilde x') \in \pi\circ \FLO^+(\xi_0)\cap \Omega,$
let $\chi_a(\cdot; \tilde x)\in C^\infty_c(B_G(\tilde x;a))$ satisfy $\chi_a(\tilde     x;\tilde x) = 1$. Define $f_6(\cdot ;\tilde x, a)$ by
\begin{eqnarray}
\label{f6}
f_6(\cdot ;\tilde x, a) := \Box_g\left(\chi_a  \langle D\rangle^{-N} \delta_{\T_{\tilde x'}}\right)
\end{eqnarray}
where $\delta_{\T_{\tilde x'}}$ the distribution given by integrating along $\T_{\tilde x'}$. Observe that 
\begin{eqnarray}
\label{u6}
u_6(\cdot ;\tilde x, a) = \left(\chi_a \langle D\rangle^{-N} \delta_{\T_{\tilde x'}}\right)
\end{eqnarray}
is the unique solution to 
\begin{equation*}
    \begin{cases}
        \Box_gu_6(\cdot ;\tilde x, a) = f_6(\cdot ;\tilde x, a),\\
        u_6\arrowvert_{(-\infty, 0)\times M_0} = 0.
    \end{cases}
\end{equation*}
Finally, we set
\begin{eqnarray}
\label{f5}
f_5(\cdot ;\tilde x, a) := \Box_g \chi_a(\cdot; \tilde x)
\end{eqnarray}
so that $u_5 (\cdot ;\tilde x, a) = \chi_a(\cdot; \tilde x)$ is the unique solution to the wave equation with source $f_5$. 
\subsection{Seventh-order interaction of waves for the non-linear wave equation}\label{linearisation}
Let $(\xi_0,\xi_1,\xi_2, \xi_3) \in (L^{*,+}\Omega)^4$ and
$$\tilde x = (\tilde t , \tilde x') \in \pi\circ \FLO^+(\xi_0)\cap \Omega.$$ 
For $h>0$ and $a\in (0,h)$, define sources $\{f_j\}_{j=0}^6$ as in the previous section. Next, we introduce a vector of seven variables denoted by $\epsilon = (\epsilon_0,\cdots,\epsilon_6)$. For the non-linear wave equation \eqref{cubic wave}, we denote by 
$$u_\epsilon = u_\epsilon(\cdot; \xi_0,\xi_1,\xi_2, \xi_3, \tilde x, a, h)$$
its solution when the source is given by
$$f = \sum_{j=0}^6 \epsilon_j f_j.$$
We also set
    \begin{align}\label{linearized_sol}
    &\nonumber u_j := \left.\partial_{\epsilon_j} u_\epsilon\right|_{\epsilon = 0}   &  j\in \{0,\cdots,6\}; \\
    &\nonumber u_{jk} := \left.\partial_{\epsilon_j}\partial_{\epsilon_k} u_\epsilon\right|_{\epsilon = 0}       &   j,k\in \{0,\cdots,6\};\\
    & \cdots\\
    &\nonumber u_{0123456} := \left.\partial_{\epsilon_0}\dots \partial_{\epsilon_6} u_\epsilon\right|_{\epsilon = 0}.
\end{align}
Then it follows that
\begin{equation}\label{box_uj}
    \begin{cases}
        \Box_g u_j = f_j,\\
        u_j\arrowvert_{(-\infty, 0)\times M_0} = 0,
    \end{cases}
\end{equation}
so that functions $\{u_j\}_{j = 1}^6$ coincide with the functions described in the previous section. In particular,
\begin{equation*}
    u_0 = u_4 = \chi_a(\cdot ; x_0), \qquad u_5 = \chi_a(\cdot; \tilde{x}), \qquad u_6 = \chi_a(\cdot; \tilde{x}) \langle D\rangle^{-N} \delta_{\T_{\tilde x'}}.
\end{equation*}
Similarly, one can check that $u_{jk} = 0$ for $j$, $k\in \{0,\cdots,6\}$ distinct. Moreover, for $j$, $k$, $l$, $\alpha$, $\beta\in \{0,\cdots,6\}$ distinct,
\begin{equation}\label{box u123}
    \begin{cases}
        \Box_g u_{jkl} = -6u_ju_ku_l,\\
        u_{jkl}\arrowvert_{(-\infty, 0)\times M_0} = 0,
    \end{cases}
\end{equation}
and
\begin{equation}\label{box u01234}
    \begin{cases}
        \Box_g u_{jkl\alpha\beta} = -\sum\limits_{\sigma\in S_5}u_{\sigma(j)} u_{\sigma(k)} u_{\sigma(l)\sigma(\alpha) \sigma(\beta)},\\
        u_{jkl\alpha\beta}\arrowvert_{(-\infty, 0)\times M_0} = 0,
    \end{cases}
\end{equation}
Here, $S_n$ is the permutation group on $\{j,k,l,\alpha,\beta\}$, that is a set of bijective operators from $\{j,k,l,\alpha,\beta\}$ to itself.

We will also need the  7-fold linearization $u_{0123456}$:
$$\Box_g u_{0123456} = \sum\limits_{\sigma\in S_7}\left(u_{\sigma(5)} u_{\sigma(6)} u_{\sigma(0)\sigma(1) \sigma(2)\sigma(3) \sigma(4)}+ u_{\sigma(0)} u_{\sigma(1)\sigma(2)\sigma(3)}u_{\sigma(4)\sigma(5)\sigma(6)}\right).$$

\subsection{A Three-to-One Scattering Relation}
Let $(\xi_0,\xi_1,\xi_2, \xi_3) \subset (L^{*,+}\Omega)^4$ based at points $(x_0,x_1,x_2,x_3)$, respectively, and $u_\epsilon$ be the solution of \eqref{cubic wave} described in the previous section. Let us consider the following two conditions for $(\xi_0,\xi_1,\xi_2, \xi_3)$:

\begin{enumerate}
\item\label{no return} We say that $(\xi_0, \xi_1, \xi_2,\xi_3)$ satisfies the {\em non-return} condition if 
\begin{eqnarray}
\label{no intersect mu}
\hat x(x_0) \notin \pi\circ \FLO^+(\xi_0) 
\end{eqnarray}
and 
\begin{eqnarray}
\label{distinction}
x_0 \notin \bigcup_{j=1}^3 \left(\pi\circ\FLO^+(\xi_j) \right).
\end{eqnarray}

\item \label{desirable} We say that $(\xi_0, \xi_1,\xi_2,\xi_3)$ satisfies the {\em desirable condition} if for any open set $\mathcal{O} \subset \Omega$ containing $x_0$ there exists $\tilde x\in {\mathcal O} \cap  \pi\circ\FLO^+(\xi_0)$ satisfying
\begin{equation*}
    \tilde x \neq x_0, \qquad \tilde x\notin\mu([0,1]),
\end{equation*}
\begin{eqnarray}
\label{hat x tilde x}
\hat x(\tilde x) \notin \bigcup_{j=1}^3 \left(\pi\circ\FLO^+(\xi_j) \right),
\end{eqnarray}
and that $u_\epsilon = u_\epsilon (\cdot; \xi_0, \xi_1,\xi_2, \xi_3, \tilde x, a, h)$ satisfies  
\begin{eqnarray}
\label{observed singularity}
\mu^{-1}(\hat x(\tilde x))\in \singsupp(\mu^*u_{0123456}),
\end{eqnarray}
for any sufficiently small $h>0$ and $0<a<h$.
\end{enumerate}

We now set
\begin{eqnarray}
\label{scattering relation}
R := \left\{(\xi_0, \xi_1, \xi_2,\xi_3)\mid x_0 \in \bigcap_{j=1}^3 I^+(x_j), \mbox{conditions \eqref{no return} and \eqref{desirable} are satisfied}\right\}
\end{eqnarray}

Observe that apriori this set is not uniquely determined by the source-to-solution operator $f\mapsto Lf$. This is because we do not apriori know whether condition \eqref{no return} is satisfied. We take care of this by

\begin{proposition}\label{boomerang}
    Let $\xi_0$, $\xi_1\in L^{*,+}\Omega$ such that $x_0 \in I^+(x_1)$, then the map $f\mapsto Lf$ determines whether the relation
    \begin{equation}\label{xi_0_belongs_FLO_xi_1}
        x_0 \in \pi\circ \FLO^+(\xi_1)
    \end{equation}
    is satisfied.
\end{proposition}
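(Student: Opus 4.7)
The plan is to microlocally test whether the first-order linearization $u_1 = G_+ \ast f_1(\cdot; \xi_1, h)$ develops a singularity at $x_0$, which by Lemma \ref{linear wave} is equivalent to \eqref{xi_0_belongs_FLO_xi_1}. Since $L$ only provides access to $u_1$ through its pullback along $\mu$, one must transport any singularity at $x_0$ to $\mu$ via a nonlinear interaction; I would do this using a fifth-order linearization.

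Fix $0 < a < h$ small and take $f_1 = f_1(\cdot; \xi_1, h)$ as in \eqref{fj}, and $f_0 = f_4 = \Box_g \chi_a(\cdot; x_0)$ as in \eqref{f0f4}. For an auxiliary $\tilde x = (\tilde t, \tilde x') \in \Omega \cap I^+(x_0)$, chosen off $\mu$, set $f_5 = \Box_g \chi_a(\cdot; \tilde x)$ and $f_6 = \Box_g(\chi_a(\cdot; \tilde x) \langle D\rangle^{-N}\delta_{\T_{\tilde x'}})$ as in \eqref{f5}, \eqref{f6}. Then $\mu^* u_{01456}$ is recovered from $L$ by differentiating five times in $\epsilon$ at zero. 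An inspection of \eqref{box u01234} in view of the supports of the $u_j$'s shows that only the triples $u_{014}$ and $u_{156}$ are nonzero, and the contribution of $u_{156}$ to the fifth-order source is multiplied by $u_0 u_4 = \chi_a^2(\cdot; x_0)$, whose support is disjoint from $\supp u_{156} \subset J^+(\tilde x)$ for $a$ small. Hence up to a nonzero constant,
\[
\Box_g u_{01456} = \chi_a^2(\cdot; \tilde x) \langle D\rangle^{-N}\delta_{\T_{\tilde x'}}\, u_{014}, \qquad \Box_g u_{014} = -6\chi_a^2(\cdot; x_0)\, u_1.
\]

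Next, I would split into two cases. Suppose first $x_0 \notin \pi\circ\FLO^+(\xi_1)$; then for $h, a$ small Lemma \ref{linear wave} gives $u_1$ smooth on $\supp\chi_a(\cdot; x_0)$, so $u_{014}$ is smooth on $M$ and the fifth-order source has wavefront contained in $\WF(u_6) \subset N^*\T_{\tilde x'}$, which is spacelike and hence disjoint from $\Sigma(\Box_g)$; propagation of singularities then yields $\mu^* u_{01456}$ smooth for every choice of $\tilde x$. Suppose conversely $x_0 \in \pi\circ\FLO^+(\xi_1)$, and let $\eta$ be the pushforward of $\xi_1$ to $x_0$ along $\gamma_{\xi_1}$, so that $u_{014}$ is a Lagrangian distribution with nonvanishing principal symbol on $\FLO^+(x_0, \eta)$. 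Choosing $\tilde x = \gamma_\eta(s) \in \Omega$ for $s > 0$ small and denoting by $\eta'$ the pushforward of $\eta$ to $\tilde x$, I would invoke Lemma \ref{lem_ximu} together with the observation that $N^*\T_{\tilde x'}$ is spacelike while $\xi^\mu(\tilde x)$ is lightlike to obtain the decomposition $\xi^\mu(\tilde x) = c\eta' + \xi'$ with $c \neq 0$ and $\xi' \in N^*\T_{\tilde x'}$. The wavefront calculus for products then places $(\tilde x, \xi^\mu(\tilde x))$ into $\WF(u_5 u_6 u_{014})$ with nonvanishing principal symbol; forward propagation under $\Phi_s$ carries this singularity along the bicharacteristic to $\hat x(\tilde x) \in \mu$, giving $\mu^{-1}(\hat x(\tilde x)) \in \singsupp(\mu^* u_{01456})$.

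Therefore, from $L$ one decides \eqref{xi_0_belongs_FLO_xi_1} by testing whether $\singsupp(\mu^* u_{01456})$ is empty for all $\tilde x \in \Omega \cap I^+(x_0) \setminus \mu$. The main technical obstacle will be the principal symbol computation for the triple product $u_5 u_6 u_{014}$: one must verify that the three wavefront components at $\tilde x$ meet transversally, that the symbol at $(\tilde x, \xi^\mu(\tilde x))$ is nonvanishing, and that the subsequent propagation along the bicharacteristic to $\hat x(\tilde x)$ does not cancel the singularity. This parallels the three-wave interaction analysis of \cite{feizmohammadi2020inverse} adapted to our fifth-order linearization.
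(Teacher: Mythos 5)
Your proposal is correct in its broad strokes, but it follows a genuinely different route from the paper's. The paper splits the proof of Proposition~\ref{boomerang} into two cases. When $x_0 \in \mu((-1,1))$, it needs no nonlinear interaction at all: it takes a shifted covector $\tilde\xi_1 \in \FLO^+(\xi_1) \cap T^*\Omega$ (perturbed along the flowout so that $\pi(\tilde\xi_1)$ and $x_0$ are not conjugate), and detects whether $\mu^* u_1(\cdot;\tilde\xi_1,h)$ is singular at $\mu^{-1}(x_0)$. When $x_0 \notin \mu((-1,1))$, it uses the third-order linearization $u_{156}(\cdot; \tilde\xi_1, \tilde x, a, h)$, again with the shifted source $f_1(\cdot; \tilde\xi_1, h)$: the wavefront of $u_1$ reaches $x_0$, a nearby $\tilde x$ on its flowout is chosen, and the interaction with the pointlike sources $u_5, u_6$ at $\tilde x$ converts the singularity into one that propagates to $\hat x(\tilde x)\in\mu$. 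The paper then controls non-cancellation by decomposing $u_{156}=u_{\rm sing}+u_{\rm reg}$ with a microlocal cutoff $\omega(x,D)$ isolating the covector $\hat\xi$ pointing from $\tilde x$ to $\hat x(\tilde x)$, together with a choice of $a>0$ small enough that $\pi\circ\FLO^+(\hat\xi)$ is the \emph{only} causal curve joining $\tilde x$ to $\hat x(\tilde x)$.

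Your scheme replaces the shifted covector $\tilde\xi_1$ with the extra pair $u_0 u_4 = \chi_a^2(\cdot;x_0)$, which localizes $u_1$ near $x_0$ and re-emits the wave as $u_{014}$, and you use a single uniform fifth-order linearization $u_{01456}$ that covers both cases $x_0\in\mu$ and $x_0\notin\mu$. Your bookkeeping of which triples survive is correct: for $a$ small, $u_0 u_4 u_{156}=0$ by disjoint supports, leaving $\Box_g u_{01456} = -12\, u_5 u_6 u_{014}$ with $\Box_g u_{014}=-6\chi_a^2(\cdot;x_0) u_1$. And your ``only if'' direction is clean: if $x_0\notin\pi\circ\FLO^+(\xi_1)$, then $\chi_a^2(\cdot;x_0)u_1$ is smooth for $h,a$ small, hence $u_{014}\in C^\infty$, the source $u_5 u_6 u_{014}$ has spacelike wavefront, and $\mu^* u_{01456}$ is smooth for every admissible $\tilde x$. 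What your approach buys: no case split, and the re-emission from the small ball $B_G(x_0;a)$ sidesteps the perturbation needed to avoid conjugate points between $\pi(\xi_1)$ and $x_0$. What the paper's approach buys: it stays at a lower order of linearization (first-order when $x_0\in\mu$, third-order otherwise), which makes both the product calculus and the transport-equation argument shorter.

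Do note, however, that the ``main technical obstacle'' you flag at the end is precisely where the paper invests most of its effort in Case 2: the decomposition $u_{\rm sing}+u_{\rm reg}$, the requirement $\hat x(\tilde x)\notin\pi\circ\FLO^+(\tilde\xi)$, and the shrinking of $a$ so that $\FLO^-(\xi)\cap T^*B_G(\tilde x;a)\subset\B_{\hat h/2}(\hat\xi)\cup\B_{\hat h/2}(-\hat\xi)$ for every $\xi\in L^*_{\hat x(\tilde x)}M$ whose backward flowout hits the source support. You would need the analogous argument for your fifth-order scheme: that $u_{01456}$ near $\hat x(\tilde x)$ is a conormal distribution whose symbol is given by a transport integral with strictly nonzero integrand as $a\to 0$, exactly as in the proof of Lemma~\ref{suf_cond_demo}. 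This is genuine work, not a routine verification, but it is the same work the paper does with slightly different bookkeeping.
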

In order to prove it we need the following modification of Lemma 2.12 in \cite{Tzou2021}:
\begin{lemma}\label{lemma_2.12}
    Let $S$ be hypersurface containing $x^*$. Suppose $u\in I(M, N^*S)$ is a conormal distribution and $\alpha$ be a curve intersecting $S$ transversally at $x^*$. If $\sigma[u](x^*,\xi)\neq 0$ for $\xi\in N_{x^*}^*S$, then the distribution $\alpha^*u\in D'(\mathbb R)$ is singular at $0$.
\end{lemma}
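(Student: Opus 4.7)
The result is a standard consequence of the calculus of conormal distributions; the only real content is identifying the principal symbol after pullback and then recognizing that a one-dimensional conormal distribution at a point with nonvanishing principal symbol cannot be smooth at that point. I would proceed in three steps.

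First, I would choose local coordinates $(x_1, x')$ near $x^*$ with $x^* = 0$ and $S = \{x_1 = 0\}$, so that $N^*_{x^*}S$ is spanned by $dx_1$. Transversality of $\alpha$ to $S$ at $x^*$ amounts to $\langle dx_1, \dot\alpha(0)\rangle \ne 0$, so the inverse function theorem applied to $x_1 \circ \alpha$ lets me reparametrize the curve as $\alpha(t) = (t, \beta(t))$ with $\beta$ smooth and $\beta(0) = 0$.

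Second, I would write $u$ modulo $C^\infty$ as the one-dimensional oscillatory integral
\begin{equation*}
    u(x) = (2\pi)^{-1}\int_{\mathbb{R}} e^{i x_1 \xi_1} a(x, \xi_1)\, d\xi_1,
\end{equation*}
with $a$ a classical symbol whose leading homogeneous part $a_0$ satisfies $a_0(0,\xi_1) = \sigma[u](x^*, \xi_1\, dx_1) \ne 0$ by hypothesis. Since $\alpha$ is transversal to $S$, the H\"ormander pullback theorem applies, and direct substitution yields
\begin{equation*}
    \alpha^*u(t) = (2\pi)^{-1}\int_{\mathbb{R}} e^{i t\xi_1}\, a\bigl((t,\beta(t)), \xi_1\bigr)\, d\xi_1 \quad\text{mod } C^\infty.
\end{equation*}
This exhibits $\alpha^*u$ as a conormal distribution on $\mathbb{R}$ with conormal bundle $T^*_0\mathbb{R}\setminus 0$, and its principal symbol at $(0,\xi_1)$ equals $a_0(0,\xi_1) \ne 0$.

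Finally, a classical symbol times $\delta$-type distribution at a point in $\mathbb{R}$ with nonvanishing principal symbol has $(0,\xi_1) \in \WF(\alpha^*u)$ for $\xi_1 \ne 0$; in particular $\alpha^*u$ is not smooth at $0$. The main obstacle is essentially bookkeeping: correctly invoking the pullback theorem and tracking the principal symbol through the substitution. Because the phase $x_1 \xi_1$ pulls back to $t\xi_1$ without any stationary-phase contributions---transversality removes all degeneracy---the principal symbol reads off directly, so I do not anticipate a serious difficulty.
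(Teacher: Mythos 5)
Your proposal follows the same route as the paper: choose coordinates flattening $S$ to a coordinate hyperplane, reparametrize $\alpha$ using transversality so that the first coordinate is the curve parameter, write $u$ as a one-dimensional oscillatory integral, pull back by substitution, and conclude from the nonvanishing of the (pulled-back) principal symbol. The only cosmetic difference is that the paper finishes with an explicit stationary-phase estimate on the Fourier transform of $\chi\,\alpha^*u$ to exhibit the lack of rapid decay, whereas you invoke the standard fact that a conormal distribution on $\mathbb{R}$ with nonvanishing principal symbol has $\{0\}$ in its singular support — these are the same verification phrased at different levels of explicitness.
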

\begin{proof}
    Since the intersection of $\alpha$ and $S$ is transversal, we can find the local coordinates system $z=(z_0,z_1,z_2,z_3) \in \mathbb R^{1 + 3}$ such that $S = \{z_0 = 0\}$ and
    \begin{equation*}
        \alpha(t) = (t, \alpha_1(t), \alpha_2(t), \alpha_3(t)).
    \end{equation*}
    In these coordinates, we write 
    \begin{equation*}
        u(z) = \int_{\mathbb R} e^{i\theta z_0} a(z, \theta) d\theta,
    \end{equation*}
    where the symbol $a(z, \theta)$ satisfies 
    \begin{equation}\label{non_zero_sym_conc}
        |a(x^*,\theta)| \geq C |\theta|^N
    \end{equation}
    for some $N\in \mathbb N$, $C>0$, and all $\theta>1$. Then
    \begin{equation*}
        \alpha^*u(t) \int_{\mathbb R} e^{i\theta t} \tilde a(t, \theta) d\theta,
    \end{equation*}
    where
    \begin{equation*}
        \tilde a(t, \theta) = a(\alpha(t), \theta).
    \end{equation*}
    Then, the stationary phase method for the Fourier transform of $\alpha^*u(t)$ gives
    \begin{multline*}
        \mathcal{F} [\chi\alpha^*u] (\xi) = \int_{\mathbb R} \int_{\mathbb R} e^{it\eta} \chi(t) \tilde a(t, \eta + \xi) dt d\eta = \chi(0) \tilde a(0,\xi) + R_1[\chi\alpha^*u](\xi)\\
        =\chi(0) a(x^*,\xi) + R_1[\chi\alpha^*u](\xi).
    \end{multline*}
    where $\chi \in C_0^\infty (\mathbb R)$ is an arbitrary function supported in a neighbourhood of zero and $R_1[\chi\alpha^*u]$ is the reminder containing all lower order terms. Therefore, due to \eqref{non_zero_sym_conc}, it follows that $\alpha^*u$ is singular at $0$.
\end{proof}

\begin{proof}[Proof of Proposition \ref{boomerang}]
    We examine two cases: one where $x_0$ lies on the curve $\mu((-1,1))$ and another where it does not. 
    
    \textbf{Case 1.} Assume that $x_0\in \mu((-1,1))$, that is there exists $s_0\in (-1,1)$ such that $x_0 = \mu(s_0)$. For $h>0$ and
    $$\tilde \xi_1 \in \FLO^+(\xi_1) \cap T^*\Omega,$$
    let $f_1 = f_1(\cdot; \tilde{\xi}_1, h)$ be the function defined by \eqref{fj} with $\tilde \xi_1$ in place of $\xi_1$ and $u_1(\cdot; \tilde{\xi}_1, h)$ be the corresponding solution of \eqref{box_uj}.
    
    We will show that condition \eqref{xi_0_belongs_FLO_xi_1} is satisfied if and only if there exists 
    $$\tilde \xi_1 \in \FLO^+(\xi_1) \cap T^*\Omega$$
    such that 
    \begin{equation}\label{sing_at_mus0}
        s_0 \in \singsupp(\mu^* u_1(\cdot; \tilde \xi_1, h))
    \end{equation}
    for all $h>0$. This equivalence implies that the map $f\mapsto Lf$ determines relation \eqref{xi_0_belongs_FLO_xi_1}.
    
    Suppose that $\mu(s_0)\in \pi\circ\FLO^+(\xi_1)$. We choose $\tilde\xi_1$ being small perturbation of $\xi_1$ along $\pi\circ\FLO^+(\xi_1)$ such that $\pi(\tilde\xi_1)$ and $\mu(s_0)$ are not conjugate to each other along $\pi\circ\FLO^+(\xi_1)$. Then, we know that in an open neighbourhood of $\mu(s_0)$,
    $$\FLO^+\left(\left(\B_h(\tilde \xi_1)\cup \B_h(-\tilde\xi_1)\right)\cap L^*_{\pi\left(\tilde \xi_1\right)}M\right)$$
    is the conormal bundle $N^*S$ of some lightlike hypersurface $S$. By Proposition 6.6 in \cite{Melrose:1979aa}, we know that 
    \begin{equation*}
        u_1(\cdot; \tilde \xi_1,h) \in I\left(\FLO^+\left(\left(\B_h(\tilde\xi_1)\cup \B_h(-\tilde\xi_1) \right)\cap L^*_{\pi\left(\tilde \xi_1\right)}M\right)\right)
    \end{equation*}
    near $\mu(s_0)$. Since $\mu(s_0)$ is not conjugate to $\pi(\tilde \xi_1)$ along $\FLO^+(\xi_1)$, the distribution $u_1(\cdot;\tilde\xi_1,h)\in I(N^*S)$ in a small neighbourhood of $\mu(s_0)$. Furthermore, if $\hat \xi_0 \in N^*_{\mu(s_0)} S$, then since $$\sigma[f_1(\cdot;\tilde \xi_1, h)](\tilde \xi_1) \neq 0,$$
    we have that 
    $$\sigma[u(\cdot; \tilde \xi_1, h)](\hat \xi_0) \neq 0;$$
    see relation (6.7) in \cite{Melrose:1979aa}. By Lemma \ref{lemma_2.12}, we conclude that $\mu^* u_1(\cdot;\tilde\xi_1,h)$ is singular at $s_0$. 

    Now suppose the contrary that $\mu(s_0)\notin \pi\circ \FLO^+(\xi_1)$. Assume that there exists
    $$\tilde \xi_1 \in \FLO^+(\xi_1) \cap T^*\Omega$$
    such that \eqref{sing_at_mus0} is fulfilled. Then, for sufficiently small $h>0$, it follows that 
    $$\mu(x_0)\notin \pi\circ \FLO^+\left(\left(\B_h(\tilde \xi_1)\cup \B_h(-\tilde\xi_1)\right)\cap L^*_{\pi\left(\tilde \xi_1\right)}M\right).$$
    By Theorem 23.2.9 in \cite{Hormander:2007aa}, this means that $u_1(\cdot;\tilde \xi_1,h)$ solving \eqref{box_uj} is smooth at $\mu(s_0)$. This contradicts the assertion \eqref{sing_at_mus0}.

    \textbf{Case 2.} Now we assume that $x_0\notin \mu((-1,1))$. We will show that condition \eqref{xi_0_belongs_FLO_xi_1} is satisfied if and only if there exists $$\tilde \xi_1 \in \FLO^+(\xi_1) \cap T^*\Omega$$ such that for any open neighbourhood $\mathcal{O}$ of $x_0$ and sufficiently small $h>0$ we can find $\tilde{x}\in \mathcal{O}$ so that 
    \begin{equation}\label{sing_at_xhat}
        \mu^{-1}(\hat{x}(\tilde{x})) \in  \singsupp \left(\mu^* u_{156}(\cdot; \tilde\xi_1, \tilde x, a, h)\right),
    \end{equation}
    for all $a\in (0,h)$. Recall that $u_{156}$ is the solution of \eqref{box u123}, that is,
    \begin{equation}\label{156 return}
        \begin{cases}
            \Box u_{156} = -6u_1 \chi_a \left(\chi_a  \langle D\rangle^{-N} \delta_{\T_{\tilde x'}}\right),\\
            u_{156}\mid_{(-\infty,0)\times M_0} = 0
        \end{cases}
    \end{equation}
    where the distribution $u_1 = u_1(\cdot; \tilde \xi_1, h)$ is defined as in Lemma \ref{linear wave} with $\tilde\xi_1$ in place of $\xi_1$ and $\chi_a(\cdot; \tilde x)\in C^\infty_c(B_G(\tilde x;a))$ with $\chi_a(\tilde     x;\tilde x) = 1$.

    Suppose that \eqref{xi_0_belongs_FLO_xi_1} holds. As in the first case, we choose $\tilde\xi_1$ to be a small perturbation of $\xi_1$ along $\FLO^+(\xi_1)$ such that $\tilde x_1 = \pi(\tilde\xi_1)$ and $x_0$ are not conjugate to each other along $\pi\circ\FLO^+(\xi_1)$. By Proposition 6.6 in \cite{Melrose:1979aa}, for all sufficiently small $h>0$ and sufficiently small neighbourhood $\mathcal O$ of $x_0$, the distribution $u_1(\cdot; \tilde \xi_1,h)$ belongs to $I(N^*S)$ in $\mathcal O$ for some lightlike hypersurface $S$. Find $\tilde x\in S$ so that $\tilde \xi\in N^*_{\tilde x}S$ satisfies 
    \begin{eqnarray}
    \label{not directly to mu}
    \hat x(\tilde x) \notin\pi\circ \FLO^+(\tilde \xi).
    \end{eqnarray}
    At the point $\tilde x$ there is a unique covector $\hat \xi\in L^{*,+}_{\tilde x}M$ such that $\hat x(\tilde x) \in \pi\circ\FLO^+(\hat \xi)$. Due to \eqref{not directly to mu}, we have that $\hat\xi \neq \tilde \xi.$ We decompose the solution of \eqref{156 return} as 
    $$u_{156} = u_{\rm sing} + u_{\rm reg},$$ 
    where $u_{\rm sing}$ and $u_{\rm reg}$ are the solution of the following equations
    \begin{equation}\label{ureg return}
    \begin{cases}
        \Box u_{\rm reg} = -6\left(1-\omega(x,D) \right)\left(u_1 \chi_a \left(\chi_a  \langle D\rangle^{-N} \delta_{\T_{\tilde x'}}\right)\right),\\
        u_{\rm reg}\mid_{(-\infty,0)\times M_0} = 0
    \end{cases}
\end{equation}
and
\begin{equation}\label{using return}
    \begin{cases}
        \Box u_{\rm sing} = -6\omega(x,D)\left(u_1 \chi_a \left(\chi_a  \langle D\rangle^{-N} \delta_{\T_{\tilde x'}}\right)\right),\\
        u_{\rm sing}\mid_{(-\infty,0)\times M_0} = 0.
    \end{cases}
\end{equation}
The microlocal cutoff $\omega(x,D)\in \Psi^0(M)$ is homogeneous of degree zero, supported in $\B_{\hat h}(\hat \xi)\cup \B_{\hat h}(-\hat\xi)$, and takes the value $1$ in $\B_{\hat h/2}(\hat \xi)\cup \B_{\hat h/2}(-\hat\xi)$. Since $\hat\xi \neq \tilde \xi$, we have that if $\hat h>0$ is sufficiently small,
\begin{eqnarray}
\label{tilde xi not in}
\tilde\xi\notin \B_{\hat h}(\hat \xi)\cup \B_{\hat h}(-\hat\xi).
\end{eqnarray}

We need to show that for all $a>0$ sufficiently small,
\begin{eqnarray}
\label{in using not in ureg}
\hat x(\tilde x) \notin \singsupp(u_{\rm reg}) \quad \text{and} \quad \mu^{-1}(\hat x(\tilde x)) \in \singsupp(\mu^*u_{\rm sing}).
\end{eqnarray}

We first look at the inhomogeneous term in \eqref{using return}. Since $u_1\in I(N^*S)$ near $\tilde x$ with $\sigma[u_1](\tilde x, \tilde\xi)\neq 0$, \eqref{tilde xi not in} indicates that 
$$\omega(x,D)\left(u_1 \chi_a \left(\chi_a  \langle D\rangle^{-N} \delta_{\T_{\tilde x'}}\right)\right) \in I(T^*_{\tilde x}M)$$ 
with nonvanishing symbol at $\hat \xi$. By (6.7) in \cite{Melrose:1979aa}, near $\hat x(\tilde x)$, $u_{\rm sing}$ is Langrangian distribution whose symbol along $\FLO^+(\hat \xi)$ is nonvanishing. And since the lightlike geodesic segment from $\tilde x$ to $\hat x(\tilde x)$ can be extended without creating conjugate points, $u_{\rm sing}$ is actually a conormal distribution near $\hat x(\tilde x)$. Using Lemma \ref{lemma_2.12}, we get that $\mu^{-1}(\hat x(\tilde x)) \in \singsupp(\mu^* u_{\rm sing})$. This is the second part of \eqref{in using not in ureg}

For the first part of \eqref{in using not in ureg}, we observe that $\pi\circ \FLO^+(\hat \xi)$ is the unique causal curve joining $\tilde x$ and $\hat x(\tilde x)$. Therefore, we can choose $a>0$ sufficiently small such that if $\xi\in L^*_{\hat x(\tilde x)}M$ satisfies
$$\pi\circ\FLO^-(\xi) \cap T^*B_G(\tilde x;a)\neq \emptyset,$$
then 
$$\FLO^-(\xi)\cap T^*B_G(\tilde x;a) \subset \B_{\hat h/2}(\hat \xi)\cup \B_{\hat h/2}(-\hat\xi).$$
But $\omega(x,D)$ is constructed so that the inhomogeneous term of \eqref{ureg return} is  not microlocally supported in $\B_{\hat h/2}(\hat \xi)\cup \B_{\hat h/2}(-\hat\xi)$. So by Theorem 23.2.9 in \cite{Hormander:2007aa}, $\hat x(\tilde x)\notin \singsupp(u_{\rm reg})$.

Finally, assume there exists $\tilde \xi_1 \in \FLO^+(\xi_1) \cap T^*\Omega$ such that \eqref{sing_at_xhat} holds. Then, for sufficiently small $h>0$ and all open sets $\mathcal O$ containing $x_0$ there is $\tilde x\in \mathcal O$ such that $u_{156}(\cdot; \tilde \xi_1, \tilde x, h,a)$ solving \eqref{156 return} has a singularity at $\hat x(\tilde x)$ for all $a\in (0,h)$. Since $x_0\notin \mu((-1,1))$ we limit the statement to those open sets $\mathcal O$ which do not intersect $\mu((-1,1))$. Therefore, $\tilde x\notin \mu((-1,1))$. Hence, we conclude that $u_1(\cdot; \tilde \xi_1,h)$ has a singularity singularity at some $\tilde x\in \mathcal O$ for all open sets $\mathcal O$ containing $x_0$. This means that for all sufficiently small $h>0$, $x_0\in \singsupp(u_1(\cdot; \tilde \xi_1,h))$. Therefore, 
$$x_0\in \pi\circ \FLO^+(\B_h(\xi_1) \cap L^*_{x_0}M)$$
for all $h>0$, and hence, \eqref{xi_0_belongs_FLO_xi_1} holds. 
\end{proof}


The next Proposition verifies that $\overline R$ defined in \eqref{scattering relation} is a three-to-one scattering relation as per \cite{feizmohammadi2020inverse}:
\begin{proposition}
\label{R is scattering relation}
The relation $\overline R$ defined in \eqref{scattering relation} is a three-to-one scattering relation.
\end{proposition}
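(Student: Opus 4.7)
The plan is to verify the two conditions (R1) and (R2) of the definition of a three-to-one scattering relation separately. Condition (R1) is the necessary direction — membership in $\overline{R}$ must imply the geometric intersection of the four null geodesics. Condition (R2) is the sufficient direction — the three geometric requirements (a)–(c) must force membership. Both directions hinge on a microlocal analysis of the seventh-order linearization $u_{0123456}$, whose source term (stated at the end of Section \ref{linearisation}) is a sum of products involving the lower linearizations $u_{01234}$, $u_{123}$, $u_{456}$, $u_j$.

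\textbf{Verification of (R1).} Suppose $(\xi_0,\xi_1,\xi_2,\xi_3) \in \overline{R}$. By \eqref{desirable}, for each neighbourhood $\mathcal{O}$ of $x_0$ one can find $\tilde{x} \in \mathcal{O} \cap \pi\circ\FLO^+(\xi_0)$ for which $\mu^{-1}(\hat{x}(\tilde{x})) \in \singsupp(\mu^* u_{0123456})$. I will trace this singularity backwards. By the wave-equation propagation theorem (Theorem 23.2.9 in H\"ormander), the singularity of $u_{0123456}$ over $\hat{x}(\tilde{x})$ propagates along a null bicharacteristic to a singularity in the source. Using the Lagrangian-distribution product calculus (Melrose–Ritter, Greenleaf–Uhlmann) to control the wavefront sets of the seventh-order source, and iterating backwards through equations \eqref{box u01234} and \eqref{box u123}, the only admissible origin is a common intersection point $y$ of the null geodesics of $\xi_1,\xi_2,\xi_3$ at which $\eta_0 \in \spn(\eta_1,\eta_2,\eta_3)$ and from which the null geodesic through $\xi_0$ also passes. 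The non-return condition \eqref{no intersect mu} and \eqref{distinction} eliminate the degenerate possibilities where the observed singularity would come from a proper subset of the interacting waves or from a direct return of $\xi_0$ to $\mu$. A limiting argument then yields (R1) for the closure.

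\textbf{Verification of (R2).} Conversely, assume (a), (b), (c) hold at a common point $y$ with covectors $\eta_j$. Given $\mathcal{O} \ni x_0$, choose $\tilde{x}$ on $\pi\circ\FLO^+(\xi_0)$ close to $x_0$ satisfying \eqref{hat x tilde x} and avoiding $\mu([0,1])$, which is possible by transversality of the null geodesic of $\xi_0$ to $\mu$. I compute the principal symbol of $u_{0123456}$ at the covector over $\hat{x}(\tilde{x})$ generated by the cascade $y \to x_0 \to \tilde{x} \to \hat{x}(\tilde{x})$. By Lemma \ref{linear wave}, each $u_j$ is a Lagrangian distribution with nonvanishing principal symbol along $\FLO^+(\xi_j)$. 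The span condition (c) guarantees that the triple product $u_1 u_2 u_3$ has nonvanishing conormal symbol at $(y,\eta_0)$, so $u_{123}$ inherits a nonvanishing principal symbol along the null geodesic of $\eta_0$ and in particular at $(x_0,\xi_0)$. The products $u_0^2 u_{123}$ in \eqref{box u01234} and then $u_5 u_6 u_{01234}$ in the seventh-order equation pass this singularity forward; the factor $u_6$ in \eqref{u6} contributes the conormal covector in $N^*\T_{\tilde{x}'}$, which by Lemma \ref{lem_ximu} combines with the covector along $\FLO^+(\xi_0)$ at $\tilde{x}$ to produce exactly $\xi^\mu(\tilde{x})$. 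Propagating this singularity along the null geodesic of $\xi^\mu(\tilde{x})$ brings it to $\hat{x}(\tilde{x}) \in \mu([-1,1])$, and Lemma \ref{lemma_2.12}, applied to $S$ being the lightlike hypersurface carrying this singularity and $\alpha = \mu$, ensures the singularity survives the pullback, giving \eqref{observed singularity}.

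\textbf{Main obstacle.} The central difficulty is verifying non-cancellation of principal symbols through the seven cascaded interactions. The source of $u_{0123456}$ is a sum of many permutation-indexed terms, most of which correspond to different causal chains; one must isolate the contribution of the designated chain and show that competing terms either vanish microlocally near the target covector or contribute to a different wavefront direction. This is controlled by choosing the microlocal window $h>0$ and bump scale $a \in (0,h)$ sufficiently small, so that the wavefront sets of the auxiliary distributions $u_0,u_4,u_5,u_6$ remain confined to neighbourhoods disjoint from the irrelevant directions, and by invoking \eqref{no return} and \eqref{hat x tilde x} to rule out spurious intersections. The closure $\overline{R}$ in the proposition absorbs limit cases at the boundary of the regime where direct symbol computation applies, such as when $y$ coincides with a cut point.
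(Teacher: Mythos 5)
Your overall strategy matches the paper: verify (R1) and (R2) separately, tracing singularities backwards through the cascaded linearizations for (R1), and propagating a nonvanishing principal symbol forward through the cascade for (R2). The (R1) sketch essentially captures what the paper does across Sections~5--6 (the decompositions $u_{0123456}=u_{\rm reg}+u_{\rm sing}$, $u_{01234}=v_{\rm reg}+v_{\rm sing}$ are the concrete vehicle for your ``iterate backwards'' step, and Proposition \ref{wf of u123} plus Proposition \ref{necessary condition} deliver the intersection).

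However there is a genuine gap in your (R2) argument. Conditions (a), (b), (c) do \emph{not} imply the first half of the non-return condition \eqref{no intersect mu}, i.e., they do not rule out $\hat x(x_0) \in \pi\circ\FLO^+(\xi_0)$. Your proof of (R2) implicitly assumes this non-return property when isolating the contribution of the designated interaction chain from direct propagation of $\xi_0$ to $\mu$, and your remedy of choosing $\tilde x$ ``avoiding $\mu([0,1])$'' concerns $\tilde x$, not $x_0$, so it does not address \eqref{no intersect mu}. What your argument actually proves is that tuples satisfying (a)--(c) \emph{and} \eqref{no intersect mu} are in $R$ (this is the content of Lemma \ref{suf_cond_demo}); to conclude membership in $\overline R$ for arbitrary tuples satisfying (a)--(c) one must perturb $\xi_0$ to $\xi_{0,k}$ so that \eqref{no intersect mu} holds. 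The subtlety is that the span condition (c) must be preserved under this perturbation, which forces $\xi_1,\xi_2,\xi_3$ to be perturbed in a coordinated way; the paper's Lemma \ref{lem_suff_pert}, resting on the linear-algebra Lemma \ref{lem_linalg} on the Minkowski light cone, is exactly this mechanism. Your closing remark that ``the closure $\overline R$ absorbs limit cases'' gestures at cut-point degeneracies but not at this constraint-preserving approximation, which is the real reason the conclusion is $\overline R$ rather than $R$. A secondary omission is that your (R2) symbol computation presumes the relevant flowouts are conormal bundles of lightlike hypersurfaces near the points of interest; this requires Proposition \ref{flowout is conormal}, which produces a sequence of nearby points where this structure actually holds, and it is at those perturbed points that the transport equation argument is run.
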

Theorem \ref{main theorem} is then a corollary of Proposition \ref{R is scattering relation} by using the result of \cite{feizmohammadi2020inverse}.

\section{Flowout from interaction curve}
In this section, for a spacelike curve $K\subset M$, we will identify $(N^*K \cap L^{*,+}M)/\R^+$ with
$$\{\eta\in N_y^*K \cap L_y^{*,+}M\mid y = (t,y') \in K,\ \eta = -dt + \eta',\ \eta'\in T^*_{y'}M_0,\ \|\eta'\|_{\kappa(t,\cdot)} = 1\}.$$
Let $\hat y \in K$ and suppose
\begin{equation*}
    \hat\eta\in (L^{*,+}_{\hat y}M\cap N^*_{\hat y} K)/\R^+   \qquad and \qquad \hat s\in (0,\rho(\hat \eta)).
\end{equation*}
We begin with two auxiliary results.
\begin{lemma}
\label{neighbourhood extremal}
There exists $\delta>0$ and an open subset
$$U\subset (N^*K \cap L^{*,+}M)/\R^+$$
containing $\hat\eta$ such that $\hat s+ \delta < \rho(\eta)$ for all $\eta\in U$.
\end{lemma}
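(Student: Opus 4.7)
The plan is to establish lower semicontinuity of the null cut function $\rho$ along the subfamily parameterized by $(N^*K\cap L^{*,+}M)/\R^+$, which gives the stated openness directly. Since $\hat s < \rho(\hat\eta)$, pick $\delta_0 > 0$ with $\hat s + 2\delta_0 < \rho(\hat\eta)$; the goal is then to exhibit an open neighborhood $U$ of $\hat\eta$ in the bundle on which $\rho(\eta) > \hat s + \delta_0$, which yields the lemma with $\delta = \delta_0$.

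Argue by contradiction: assume there is a sequence $\eta_n \to \hat\eta$ in $(N^*K\cap L^{*,+}M)/\R^+$ with $\rho(\eta_n) \le \hat s + \delta_0$. Continuous dependence of the geodesic flow on initial conditions gives $\pi(\eta_n) \to \hat y$ and $\gamma_{\eta_n}\to \gamma_{\hat\eta}$ uniformly on $[0,\hat s + 2\delta_0]$. In particular, setting $q_n := \gamma_{\eta_n}(\hat s + 2\delta_0)$, we have $q_n \in I^+(\pi(\eta_n))$, so there are timelike curves $\alpha_n$ from $\pi(\eta_n)$ to $q_n$ with positive Lorentzian length. Global hyperbolicity supplies, via the limit curve theorem, a subsequential limit $\alpha$ which is a causal curve from $\hat y$ to $\gamma_{\hat\eta}(\hat s + 2\delta_0)$. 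Because $\hat s + 2\delta_0 < \rho(\hat\eta)$, the endpoint is not in $I^+(\hat y)$, so $\alpha$ must be a (reparameterization of a) null pregeodesic, and the unique such pregeodesic from $\hat y$ to that endpoint before its cut point is $\gamma_{\hat\eta}$ itself.

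To finish, use the classical dichotomy that $\rho(\eta_n) = \min(t_c(\eta_n), t_x(\eta_n))$, where $t_c$ is the first conjugate parameter and $t_x$ is the first parameter at which a distinct null geodesic from $\pi(\eta_n)$ meets $\gamma_{\eta_n}$. In the subsequence with $t_c(\eta_n) \le \hat s + \delta_0$, the lower semicontinuity of the first conjugate parameter (via continuous dependence of Jacobi fields) forces a conjugate parameter of $\gamma_{\hat\eta}$ at some $s \le \hat s + \delta_0 < \rho(\hat\eta) \le t_c(\hat\eta)$, a contradiction. In the complementary subsequence, pick distinct null directions $\eta_n'$ at $\pi(\eta_n)$ and parameters $s_n \le \hat s + \delta_0$, $s_n' > 0$ with $\gamma_{\eta_n'}(s_n') = \gamma_{\eta_n}(s_n)$. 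Normalizing directions against a background Riemannian metric and extracting a convergent subsequence (using compactness of the causal diamond $J^+(\hat y)\cap J^-(\gamma_{\hat\eta}(\hat s + 2\delta_0))$ afforded by global hyperbolicity) yields a distinct null geodesic from $\hat y$ meeting $\gamma_{\hat\eta}$ at parameter $s \le \hat s + \delta_0$, again contradicting $\hat s + \delta_0 < \rho(\hat\eta)$.

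The main obstacle is the crossing case: one must rule out the second null geodesic $\eta_n'$ collapsing onto $\hat\eta$ in the limit and must guarantee uniform bounds on the auxiliary parameters $s_n'$ so that limits exist. Both are handled by fixing a normalization of null directions (which the identification in the excerpt already provides) and by compactness of the causal diamond between $\hat y$ and $\gamma_{\hat\eta}(\hat s + 2\delta_0)$.
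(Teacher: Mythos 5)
Your argument reduces, exactly as the paper's does, to the lower semi-continuity of the cut function $\rho$ on $L^{*,+}M$, applied to a sequence $\eta_n \to \hat\eta$ with $\rho(\eta_n)$ bounded above by roughly $\hat s$; the contradiction with $\hat s < \rho(\hat\eta)$ then falls out immediately. The difference is one of scope: the paper simply invokes lower semi-continuity of $\rho$ as a known fact (and leans on \cite{feizmohammadi2020inverse} elsewhere for the relevant Lorentzian cut-locus facts), whereas you attempt to re-derive it from the conjugate/crossing dichotomy. That is a legitimate thing to do, but your re-derivation has a gap you yourself flag and then do not actually close: in the crossing case, nothing prevents the second null directions $\eta_n'$ from converging to $\hat\eta$ itself, so the limit need not furnish a \emph{distinct} null geodesic from $\hat y$. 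Normalizing against a background Riemannian metric fixes the scale of $\eta_n'$ but not its direction, so it cannot rule out collapse. The standard way to handle this degeneration is to show that collapsing crossing geodesics produce, in the limit, a Jacobi field vanishing at both endpoints, i.e.\ the crossing case degenerates into the conjugate case; your sketch omits this. Since the overall logic matches the paper and the lower semi-continuity of $\rho$ is in fact true, your proposal is correct in outline, but if you insist on proving the semi-continuity yourself rather than citing it, the collapsing-geodesic subcase needs to be addressed. The intermediate limit-curve paragraph is also dispensable: it establishes that the limit of the auxiliary timelike curves is $\gamma_{\hat\eta}$ but does not by itself produce the contradiction, which ultimately comes from the dichotomy argument.
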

\begin{proof}
Suppose the contrary. Then there exist sequences $\delta_j>$ and
$$\eta_j\in (L^{*,+}_{\hat y}M\cap N^*_{\hat y} K)/\R^+$$
such that $\delta_j \to 0$, $\eta_j \to \hat{\eta}$, and
$$\delta_j + \hat s \geq \rho(\eta_j).$$
Taking the limit we get
$$\hat s \geq \liminf \rho(\eta_j) \geq \rho(\hat\eta)$$
by lower semi-continuity of $\rho(\cdot)$. This contradicts our assumption that $\hat s\in (0,\rho(\hat\eta))$.\end{proof}

\begin{lemma}\label{Sj is lightlike}
    Let 
    $$\Lambda := \FLO^+(N^*K \cap L^{*,+}M),$$
    then for any $\lambda\in \Lambda$ there exists $V\in T^*M$ an open conic neighbourhood of $\lambda$ such that $V\cap \Lambda$ is a conic subset of $N^*S$ for some lightlike submanifold $S\subset M$ of codimension $1$.
\end{lemma}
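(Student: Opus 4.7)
The plan is to show that $\Lambda$ is a smooth conic Lagrangian submanifold of $T^*M\setminus 0$ contained in $\Sigma(\Box_g)$, and then to exhibit it locally as the conormal bundle of a lightlike hypersurface via a rank analysis of the base projection $\pi|_\Lambda$. First I would verify that $\mathcal{K} := N^*K \cap L^{*,+}M$ is a smooth $3$-dimensional isotropic submanifold of $T^*M\setminus 0$: isotropy is inherited from the Lagrangian $N^*K$, and smoothness follows because $N^*K$ meets the characteristic variety $\Sigma(\Box_g)$ transversally at lightlike points. Indeed, the vertical differential of the principal symbol $p(x,\xi)=g^{ij}\xi_i\xi_j$ is the functional $w \mapsto 2\eta^\sharp(w)$ on $T^*_yM$, which restricts nontrivially to the fiber $N^*_yK$ since the lightlike vector $\eta^\sharp$ is not tangent to the spacelike curve $K$. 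Because $p\equiv 0$ on $\mathcal{K}$ and $d\pi(H) = 2\eta^\sharp$ is transverse to $T_yK$ and hence to $\mathcal{K}$, the standard Hamiltonian flowout construction gives that $\Lambda$ is a smooth $4$-dimensional conic Lagrangian submanifold of $T^*M\setminus 0$, lying in $\Sigma(\Box_g)$ because $p$ is conserved by $\Phi_s$.

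Next I would show that $\pi|_\Lambda$ has rank $3$ in a conic neighborhood of $\lambda$. Writing $\lambda = \Phi_{s_0}(\eta_0)$ with $\eta_0 \in \mathcal{K}$ and $s_0 > 0$, I would parameterize $\Lambda$ modulo the $\mathbb{R}^+$-scaling by coordinates $(t,\omega,s)$, where $t$ runs over a small interval of $K$, $\omega$ over the circle of future-pointing lightlike conormal directions at $y(t)$ (mod scaling), and $s$ is the flow parameter. Under $(t,\omega,s)\mapsto \pi\circ\Phi_s(\eta(t,\omega))$, the vectors $\partial_t, \partial_\omega, \partial_s$ push forward at $\lambda$ to two Jacobi fields $J_t(s_0), J_\omega(s_0)$ along the null geodesic $\gamma := \gamma_{\eta_0}$, corresponding respectively to variation of the base point along $K$ and of the initial lightlike direction, together with the geodesic velocity $\dot\gamma(s_0)$. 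For $s_0$ smaller than the first focal distance of $K$ along $\gamma$, these three vectors are linearly independent in $T_{\gamma(s_0)}M$, so the rank is $3$ after shrinking $V$ if necessary.

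Once rank $3$ is established on an open conic neighborhood $V$ of $\lambda$, the classical local description of conic Lagrangian submanifolds of $T^*M$ (projection of rank $n-c$ yields the conormal bundle of a codimension-$c$ submanifold) gives that $S := \pi(V\cap\Lambda)$ is a smooth codimension-$1$ submanifold of $M$ and $V\cap\Lambda\subset N^*S$. The hypersurface $S$ is lightlike because every nonzero covector of $V\cap\Lambda \subset N^*S$ is lightlike (from $V\cap\Lambda \subset \Sigma(\Box_g)$), which forces $TS$ to be degenerate for $g$. The main technical obstacle lies in Step~$2$, namely verifying rank $3$ of $\pi|_\Lambda$: it amounts to the absence of focal points of $K$ along $\gamma$ up to parameter $s_0$, a delicate matter in Lorentzian Jacobi theory handled by shrinking the neighborhood. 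The degenerate boundary case $s_0=0$ (where the $\omega$-variation does not contribute to the projection and the rank drops to $2$) indicates that the statement is to be applied at interior points of the flowout, which is the regime relevant to subsequent use.
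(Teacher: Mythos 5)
Your approach differs substantially from the paper's and contains a genuine gap at the key step. You try to establish that $\pi|_\Lambda$ has rank $3$ near $\lambda$ by a Jacobi-field/focal-point analysis, and then invoke the constant-rank description of Lagrangians. But the lemma is asserted for \emph{any} $\lambda \in \Lambda$, including points $\lambda = \Phi_{s_0}(\eta_0)$ where $s_0$ is a focal distance of $K$ along the null geodesic $\gamma_{\eta_0}$. At such a point the rank of $d(\pi|_\Lambda)$ genuinely drops, and ``shrinking $V$'' does not help: the degeneracy is located at $\lambda$ itself, so every conic neighbourhood of $\lambda$ contains it. Your closing paragraph concedes this (``it amounts to the absence of focal points\ldots handled by shrinking the neighbourhood''), but the handling offered does not work, so Step 2 as written is incomplete.

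The paper's proof sidesteps the rank question entirely. It first cites the local normal-form result (Duistermaat, Prop.~3.7.2) to write $V \cap \Lambda$ as an open conic subset of $N^*S$ for some submanifold $S$, with $\codim S = k$ a priori unknown. It then rules out $k > 1$ by a short linear-algebra argument that exploits $\Lambda \subset \Sigma(\Box_g)$: if $k>1$, then a fiber $\pi^{-1}(x) \cap (V\cap\Lambda)$ is an open subset of the $k$-plane $N^*_x S$ and so contains an open convex set; an open convex subset of a $\ge 2$-dimensional linear space must contain two linearly independent lightlike covectors together with nontrivial convex combinations, and such combinations are necessarily timelike or spacelike, contradicting $\Lambda \subset \Sigma(\Box_g)$. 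This convexity observation forces $k=1$ with no Jacobi-field input and, crucially, works at focal points as well. Your front-end discussion (that $N^*K \cap L^{*,+}M$ is a smooth $3$-dimensional isotropic manifold by transversality, that its flowout is conic Lagrangian and lies in $\Sigma(\Box_g)$, and that $S$ must be lightlike because all covectors of $N^*S$ are null) is correct and consonant with the paper's implicit reasoning; what needs to be replaced is the rank argument, substituting the fiberwise convexity constraint for the Jacobi-field count.
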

\begin{proof}
    Since $\Lambda$ is a Lagrangian manifold Proposition 3.7.2 in \cite{Duistermaat:1996aa} implies that any $\lambda\in \Lambda$ has conic neighbourhood $V\in T^*M$ such that $S:=\pi(V\cap \Lambda)$ is a $k$-dimensional submanifold of $M$ and $V\cap \Lambda$ is an open subset of $N^*S$.

    Next, we show that $k=1$. Assume that $k>0$. Let $x\in S$. Since $\Lambda\cap V$ is an open conic subset of $N^*S$, we conclude that $\pi^{-1}(x) \cap (V\cap \Lambda)$ is an open subset of $N^*S$, therefore, contains an open convex subset $U$. Since $k>1$, $U$ contains two linearly independent lightlike covectors together with their nontrivial convex combinations which are not lightlike. This contradicts to the fact that all covectors in $U\subset \Lambda$ are lightlike, so that $k=1$. 
\end{proof}

\begin{proposition}
\label{flowout is conormal}
There exist sequences
\begin{equation*}
    \eta_j \in (N_{y_k}^*K \cap L_{y_k}^{*,+}M)/\R^+  \qquad and \qquad s_j \in (0,\rho(\eta_j))
\end{equation*}
converging to $\hat \eta$ and $\hat s$, respectively, such that the following condition holds: For each $j\in \mathbb N$, we can find sufficiently small $h>0$ and open set $\oo_j\subset M$ containing $z_j:= \gamma_{\eta_j}(s_j)$ so that 
$$\FLO^+(N^*\Gamma_{j;h}\cap L^{*} M)\cap T^*\oo_{j} $$
is the conormal bundle of a lightlike hypersurface, where
\begin{equation*}
    \Gamma_{j;h} := \{y\in K\mid d_G(y,y_j)<h\}.
\end{equation*}

Furthermore, for each $j\in \nn$ there exists a conic open set $U_{j} \subset T^*M$ containing $\eta_j$ such that 
$$\pi\circ \FLO^+(N^*\Gamma_{j;h}\cap L^*M)\cap\oo_{j} \cong \left(U_j\cap N^*\Gamma_{j;h} \cap L^{*,+}M\right) /\R^+ \times (-\delta + s_j, s_j +\delta)$$
for some $\delta>0$ depending on $j\in \nn$ and $h>0$.
\end{proposition}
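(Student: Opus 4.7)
The plan is to apply Lemma \ref{Sj is lightlike} at the flowed covector $\lambda := \Phi_{\hat s}(\hat \eta)$ to get the local conormal bundle structure of the full flowout $\Lambda := \FLO^+(N^*K \cap L^{*,+}M)$, and then shrink the open set $\oo_j \subset M$ and the arc parameter $h$ simultaneously so that the restricted flowout from $N^*\Gamma_{j;h}$ inherits this structure.

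First, Lemma \ref{neighbourhood extremal} supplies $\delta > 0$ and an open neighbourhood $U \subset (N^*K \cap L^{*,+}M)/\R^+$ of $\hat\eta$ on which $\hat s + \delta < \rho(\eta)$, so the flow remains inside the injectivity domain for $s \in (0, \hat s + \delta)$. Lemma \ref{Sj is lightlike}, applied at $\lambda$, produces a conic open neighbourhood $V \subset T^*M$ of $\lambda$ and a lightlike hypersurface $S \subset M$ such that $V \cap \Lambda$ is a conic open subset of $N^*S$. Any sequences $\eta_j \to \hat \eta$ in $U$ and $s_j \to \hat s$ in $(0, \hat s + \delta)$ will do --- in fact constant sequences suffice --- but the sequential formulation is retained for flexibility in downstream applications. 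Set $\lambda_j := \Phi_{s_j}(\eta_j)$ and $z_j := \pi(\lambda_j)$, so that $\lambda_j \in V$ for large $j$.

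For each such $j$, I would choose $h > 0$ small enough that $\Gamma_{j;h}$ is a compact arc in $K$ on which the restricted flow map $(\eta, s) \mapsto \Phi_s(\eta)$ from $(N^*\Gamma_{j;h} \cap L^{*,+}M)/\R^+ \times (0, \hat s + \delta)$ into $\Lambda/\R^+$ is a local diffeomorphism at $(\eta_j, s_j)$. Then I would shrink $\oo_j$ to a neighbourhood of $z_j$ so small that (a) $T^*\oo_j \cap \Lambda \subset V$, and (b) every pair $(\eta, s) \in (N^*\Gamma_{j;h} \cap L^{*,+}M) \times (0, \hat s + \delta)$ with $\gamma_\eta(s) \in \oo_j$ lies in the local diffeomorphism neighbourhood of $(\eta_j, s_j)$. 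Condition (b) follows from compactness of $\Gamma_{j;h}$ together with continuity of the Hamiltonian flow. Under these choices $\FLO^+(N^*\Gamma_{j;h} \cap L^*M) \cap T^*\oo_j$ lies in $V \cap \Lambda \subset N^*S$, its base projection is a neighbourhood of $z_j$ in $S$, and the set coincides with $N^*(S \cap \oo_j)$, the conormal bundle of the lightlike hypersurface $S \cap \oo_j$.

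The product decomposition is then the graph picture of the same diffeomorphism: taking $U_j \subset T^*M$ to be the conic open neighbourhood of $\eta_j$ on which $(\eta, s) \mapsto \Phi_s(\eta)$ is a diffeomorphism onto its image inside $\Lambda \cap T^*\oo_j$, the inverse map identifies $\pi \circ \FLO^+(N^*\Gamma_{j;h} \cap L^*M) \cap \oo_j$ with $((U_j \cap N^*\Gamma_{j;h} \cap L^{*,+}M)/\R^+) \times (s_j - \delta, s_j + \delta)$. The main obstacle in executing the plan is step (b): ruling out distant initial covectors in $N^*\Gamma_{j;h} \cap L^{*,+}M$ whose bicharacteristics re-enter $\oo_j$ carrying covectors in $\Lambda \setminus V$, which would contaminate the restricted flowout with points off $N^*S$. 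This is disposed of by choosing $h$ first (so the starting arc is compact and clustered near $y_j$) and then $\oo_j$ small relative to the resulting flow-time separation, using that $\Lambda$ is a smooth Lagrangian through $\lambda$ and hence locally simple-sheeted over the base.
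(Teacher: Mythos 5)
Your plan correctly isolates the two inputs (Lemmas \ref{neighbourhood extremal} and \ref{Sj is lightlike}) and correctly flags the central difficulty: the flowout $\FLO^+(N^*\Gamma_{j;h}\cap L^*M)$ might re-enter $\oo_j$ through covectors of $\Lambda$ lying outside $V_j$, so that the restriction to $T^*\oo_j$ spills outside $N^*S_j$. But the argument you offer to dispose of this difficulty is essentially a restatement of the claim, and this is a genuine gap. That $\Lambda$ is ``locally simple-sheeted over the base'' near $\lambda_j$ is precisely what has to be shown: a priori $\Lambda\cap T^*\oo_j$ could have several sheets, only one of which coincides with $N^*S_j\cap T^*\oo_j$, and shrinking $\oo_j$ excludes the extra sheets only after one has shown they stay bounded away from $\lambda_j$. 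Likewise, ``choosing $h$ first and then $\oo_j$ small relative to the resulting flow-time separation'' presupposes a flow-time gap that has not been established. Compactness of $\Gamma_{j;h}$ and continuity of the flow alone cannot rule out a sequence of re-entering covectors whose flow times accumulate at $s_j$, since the set of such covectors is not closed.

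The paper supplies the missing geometric input, which your outline never invokes. Because $s_j<\rho(\eta_j)$, the lightlike geodesic segment from $y_j$ to $z_j$ is the unique causal path joining these two points (Lemma~6.5 of \cite{feizmohammadi2020inverse}). One then argues by contradiction: a putative sequence of covectors $\Phi_{r_{j;k}}(\eta_{j;k})\notin N^*S_j$ with $\pi(\eta_{j;k})\to y_j$ and $\pi\circ\Phi_{r_{j;k}}(\eta_{j;k})\to z_j$ has a subsequential limit $(\tilde\eta_j,\tilde r_j)$ with $\pi\circ\Phi_{\tilde r_j}(\tilde\eta_j)=z_j$. If $\eta_{j;k}\in U_j$ then by the shrinkage of $U_j$ and $\delta_j$ one must have $r_{j;k}\notin(s_j-\delta_j,s_j+\delta_j)$, hence $\tilde r_j\ne s_j$ and $\tilde\eta_j\ne\eta_j$ (else the null geodesic self-intersects); if $\eta_{j;k}\notin U_j$ then $\tilde\eta_j\ne\eta_j$ directly. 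Either way one obtains two distinct causal paths from $y_j$ to $z_j$, contradicting $s_j<\rho(\eta_j)$. Without this cut-point uniqueness argument, step (b) of your plan cannot be completed.

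A secondary caveat: your assertion that ``constant sequences suffice'' takes Lemma \ref{Sj is lightlike} entirely at face value, but the local conormal description of a flowout Lagrangian is guaranteed only on an open dense subset, away from focal points of $K$ along $\gamma_{\hat\eta}$; at $\hat\lambda$ itself the projection $\pi|_\Lambda$ may degenerate. The sequence $\lambda_j\to\hat\lambda$ is there to land in the good set, not merely ``for flexibility in downstream applications''.
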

\begin{proof}
Let $\lambda_j \in \Lambda$ be a sequence converging to $\hat \lambda := \Phi_{\hat s} (\hat\eta)$. By Lemma \ref{Sj is lightlike}, for each $j\in \mathbb N$, there exists an open conic set $V_j\subset T^*M$ containing $\lambda_j$ and lightlike hypersurfaces $S_j\subset M$ of codimension one such that $V_j\cap \Lambda$ is a conic open subset of $N^*S_j$. So the projection $\pi(V_j\cap \Lambda) = \oo_j \cap S_j$ for some open set $\oo_j\subset M$. We set $z_j := \pi(\lambda_j) \in S_j$ and choose sequences
\begin{equation*}
    \eta_j\in (N^*K \cap L^{*,+}M)/\R^+  \qquad and \qquad s_j >0
\end{equation*}
converging to $\hat \eta$ and $\hat s$, respectively, such that $\Phi_{s_j} (\eta_j) = \lambda_j$. Due to Lemma \ref{neighbourhood extremal}, for sufficiently large $j\in \mathbb N$, it follows that 
\begin{eqnarray}
\label{sj before cut point}
s_j < \rho(\eta_j).
\end{eqnarray}
Without loss of generality, we assume that this holds for all $j\in \mathbb N$. Therefore, by applying Lemma \ref{neighbourhood extremal} to $(\eta_j, s_j)$, we find $\delta_j>0$ and open subset 
$$U_j\subset (N^*K \cap L^{*,+}M)/\R^+$$
containing $\eta_j$ such that $s_j + \delta_j< \rho(\eta)$ for all $\eta\in U_j$. Without loss of generality, we may choose $U_j$ and $\delta_j$ to be small so that 
\begin{eqnarray}
\label{Phis in conormal}
\Phi_{s}(\eta) \in V_j \cap \Lambda = N^*S_j\cap T^*\oo_j,
\end{eqnarray} 
for all $s\in (s_j -\delta_j, s_j + \delta_j)$ and $\eta\in U_j$. Consequently,
\begin{equation}\label{on the hypersurface}
    \pi\circ\Phi_s(\eta) \in S_j\cap \oo_j,
\end{equation}
for all $s\in (s_j -\delta_j, s_j + \delta_j)$ and $\eta\in U_j$.


We now need to verify that for each $j\in \nn$ we can choose $h>0$ sufficiently small and $\tilde \oo_j\subset\subset \oo_j$ containing $z_j$ so that the entire flowout from $N^*\Gamma_{j;h} \cap L^{*,+}M$ satisfies
$$\FLO^+(N^*\Gamma_{j;h} \cap L^{*,+}M) \cap T^*\tilde \oo_{j} \subset N^*S_{j}.$$
Suppose this statement fails to hold. Then, for each $j\in\nn$, there is a sequence
$$\{\eta_{j;k}\}_{k\in \nn} \subset (N^*K \cap L^{*,+}M)/\R^+$$ 
with $\pi(\eta_{j;k})$ converging to $y_j$ and some bounded sequence $r_{j;k} \in \R$ such that
$$\pi\circ\Phi_{r_{j;k}}(\eta_{j;k}) \to z_j$$
but $\Phi_{r_{j;k}}(\eta_{j;k}) \notin N^*S_j$ for all $k\in \nn$.

Suppose that $\eta_{j;k}\in U_j$ for some $k\in\nn$. Due to \eqref{Phis in conormal}, this implies that
$$r_{j;k}\notin (s_j - \delta_j, s_j + \delta_j).$$
If for fixed $j\in \nn$, there are infinitely many $\eta_{j;k}\in U_j$, then there exists a limit point $(\tilde \eta_j, \tilde r_j)$ of the sequence $( \eta_{j;k},  r_{j;k})$ such that
\begin{equation*}
    \tilde \eta_j\in N^*_{y_j}K \cap L^{*,+}_{y_j}M\cap \bar U_j, 
    \qquad 
    \tilde r_j \notin  (s_j-\delta_j, s_j + \delta_j),
\end{equation*}
and $\pi\circ \Phi_{\tilde r_j}(\hat\eta_j) = z_j$. Clearly $\tilde \eta_j \neq \eta_j$ or we will have a self-intersecting lightlike geodesic. So we now have two distinct causal paths joining $y_j$ and $z_j$ contradicting \eqref{sj before cut point}; see Lemma 6.5 in \cite{feizmohammadi2020inverse}. Therefore, we can conclude that for each fixed $j\in \nn$ there are only finitely many $\eta_{j;k}$ in $U_{j}$.

Without loss of generality, $\eta_{j;k} \notin U_j$ for all $k\in \nn$. Then, there exists a limit point $(\tilde \eta_j, \tilde r_j)$ of the sequence $( \eta_{j;k},  r_{j;k})$ such that $\tilde \eta_j \notin U_j$. On the other hand,
$$\pi\circ \Phi_{r_{j;k}}(\eta_{j;k}) \to z_j = \pi\circ \Phi_{s_j}(\eta_j),$$
and hence, we have two distinct causal curves joining $z_j$ and $y_j$, contradicting \eqref{sj before cut point}.
\end{proof}

\section{On the regularity of the interaction of waves}
In this section, we investigate the regularity of the waves defined in Section \ref{linearisation}.

\begin{lemma}
\label{uj smooth}
Assume that \eqref{distinction} and \eqref{hat x tilde x} are satisfied. We have that for $j=1,2,3$ and $h>0$ sufficiently small, $u_j\mid_{\mathcal O}$ is smooth and that $\hat x(\tilde x) \notin\singsupp(u_j)$.  
\end{lemma}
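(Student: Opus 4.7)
The plan is to combine the wavefront description of $u_j$ coming from Lemma \ref{linear wave} with the two disjointness hypotheses \eqref{distinction} and \eqref{hat x tilde x} and a continuity argument for the lightlike geodesic flow. By Lemma \ref{linear wave}, $u_j$ is a Lagrangian distribution and, away from its source point $x_j$,
\begin{equation*}
\singsupp(u_j) \subset \pi\circ\FLO^+\bigl((\B_h(\xi_j)\cup \B_h(-\xi_j))\cap L^{*,+}_{x_j}M\bigr).
\end{equation*}
First I would observe that for $h>0$ small enough $\B_h(-\xi_j)\cap L^{*,+}_{x_j}M=\emptyset$, because $-\xi_j$ is past-pointing and the $G$-ball of radius $h$ about it cannot cross the light cone. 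Shrinking $\mathcal O$ if necessary so that $x_j\notin\overline{\mathcal O}$ (possible since $x_0\in I^+(x_j)$), the contribution of the conormal piece at $x_j$ is irrelevant, and it suffices to control the forward null flowout from a small neighbourhood of $\xi_j$ inside $L^{*,+}_{x_j}M$.

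Let $\gamma_j := \pi\circ\FLO^+(\xi_j)$, the forward lightlike geodesic from $x_j$. The hypotheses say precisely that $x_0\notin\gamma_j$ and $\hat x(\tilde x)\notin\gamma_j$. After shrinking $\mathcal O$ around $x_0$ and choosing a small closed neighbourhood $K$ of $\hat x(\tilde x)$, the compact set $C := \overline{\mathcal O}\cup K$ can be assumed disjoint from $\gamma_j$ and disjoint from $\{x_j\}$. The goal then reduces to showing that for all sufficiently small $h>0$,
\begin{equation*}
\pi\circ\FLO^+\bigl(\B_h(\xi_j)\cap L^{*,+}_{x_j}M\bigr)\cap C = \emptyset,
\end{equation*}
which, combined with the microlocal inclusion above, delivers both $u_j|_{\mathcal O}\in C^\infty$ and $\hat x(\tilde x)\notin\singsupp(u_j)$.

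To prove this emptiness, I would argue by contradiction: if it failed for every $h$, one could extract sequences $\xi_n\in L^{*,+}_{x_j}M$ with $\xi_n\to\xi_j$ and parameters $s_n\ge 0$ such that $\pi\circ\Phi_{s_n}(\xi_n)\in C$. The continuous dependence of the Hamiltonian flow on initial data then gives a contradiction \emph{provided} $s_n$ stays bounded, since any convergent subsequence would produce a point of $\gamma_j\cap C$. This boundedness is the main obstacle and the only place in the argument where more than bare continuity is needed. Here I would enlarge $C$ to a compact causal diamond $\mathcal D'\subset M$ and use the slicing $M=\R\times M_0$ together with the time-increasing property of future-directed causal geodesics: writing $x=(t,x')$ and using that $\tfrac{dt}{ds}$ admits a strictly positive lower bound on the compact set of future lightlike directions over $\mathcal D'$, the affine time spent by any future lightlike geodesic in $\mathcal D'$ is uniformly bounded by some $T<\infty$. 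Consequently $s_n\le T$ for all $n$, and the contradiction goes through.

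The one potential subtlety is the uniform time bound just described; it is exactly the non-imprisonment feature of globally hyperbolic spacetimes, and could alternatively be phrased via compactness of causal diamonds $J^+(p)\cap J^-(q)$. Once this bound is in hand, every other step is a routine application of Lemma \ref{linear wave} and continuity of the geodesic flow.
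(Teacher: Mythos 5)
Your argument is correct and follows the same route as the paper: bound $\singsupp(u_j)$ by $\pi\circ\FLO^+\bigl(\B_h(\xi_j)\cap L^{*,+}_{x_j}M\bigr)$ via Lemma~\ref{linear wave}, then show this flowout avoids $\mathcal O$ and $\hat x(\tilde x)$ once $h$ (and $\mathcal O$) are small. The paper simply asserts the latter emptiness for $h$ small, whereas you supply the details it leaves implicit — the sequence/contradiction argument and the uniform affine-time bound via compactness of causal diamonds in a globally hyperbolic manifold — and these are correct.
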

\begin{proof} By condition \eqref{distinction}, for all $h>0$ sufficiently small, we have that 
$$\pi\circ\FLO^+(\B_h(\xi_j) \cap L_{x_j}^{*,+}M)\cap\mathcal O = \emptyset.$$
By Lemma \ref{linear wave}, each $u_j$ satisfies that 
$$\singsupp(u_j ) \subset \pi\circ \FLO^+(\B_h(\xi_j) \cap L_{x_j}^{*,+}M).$$
Therefore, $\singsupp(u_j) \cap \mathcal O = \emptyset$. 

Similarly, by condition \eqref{hat x tilde x}, we have that if $h>0$ is small enough, $$\hat x(\tilde x) \notin \bigcup_{j=1}^3 \pi\circ\FLO^+(\B_h(\xi_j) \cap L_{x_j}^{*,+}M).$$
So $\hat x(\tilde x) \notin\singsupp(u_j)$.

\end{proof}

\begin{lemma}\label{uabc}
    For $a\in (0,h)$ sufficiently small, it follows that $u_{\alpha\beta\gamma} = 0$, where $\alpha$, $\beta$, $\gamma\in \{0,4,5,6\}$ are distinct.
\end{lemma}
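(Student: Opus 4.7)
The plan is to argue by a direct support inspection, using uniqueness of the Cauchy problem for the linear wave equation to conclude. Recall from \eqref{box u123} that for distinct $\alpha,\beta,\gamma\in \{0,4,5,6\}$,
\begin{equation*}
    \begin{cases}
        \Box_g u_{\alpha\beta\gamma} = -6 u_\alpha u_\beta u_\gamma,\\
        u_{\alpha\beta\gamma}\mid_{(-\infty,0)\times M_0} = 0,
    \end{cases}
\end{equation*}
so by uniqueness of solutions of the wave equation with vanishing past data, it suffices to show that $u_\alpha u_\beta u_\gamma$ vanishes identically for $a\in (0,h)$ sufficiently small.

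Next, I would record the following support information coming from the constructions in Section \ref{linearisation}: by \eqref{f0f4} and the choice of $\chi_a(\cdot;x_0)$, the functions
\begin{equation*}
    u_0 = u_4 = \chi_a(\cdot; x_0) \quad \text{are supported in } B_G(x_0; a),
\end{equation*}
while by \eqref{u6}, \eqref{f5} and the choice of $\chi_a(\cdot;\tilde x)$, the distributions
\begin{equation*}
    u_5 = \chi_a(\cdot; \tilde x), \qquad u_6 = \chi_a(\cdot; \tilde x)\,\langle D\rangle^{-N}\delta_{\T_{\tilde x'}} \quad \text{are supported in } B_G(\tilde x; a),
\end{equation*}
the latter because $\chi_a(\cdot;\tilde x)$ appears as a multiplier outside the pseudodifferential operator $\langle D\rangle^{-N}$.

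The key combinatorial observation is that any $3$-element subset of $\{0,4,5,6\}$ must contain at least one index in $\{0,4\}$ and at least one in $\{5,6\}$, since otherwise it would be contained in a two-element set. Consequently, for every admissible choice of distinct $\alpha,\beta,\gamma$, at least one factor of $u_\alpha u_\beta u_\gamma$ is supported in $B_G(x_0;a)$ and at least one in $B_G(\tilde x;a)$, and thus
\begin{equation*}
    \supp(u_\alpha u_\beta u_\gamma) \subset B_G(x_0;a) \cap B_G(\tilde x; a).
\end{equation*}
The desirable condition \eqref{desirable} provides $\tilde x \ne x_0$, so $d_G(x_0, \tilde x) > 0$. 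Therefore, choosing $a < d_G(x_0, \tilde x)/2$ makes the two balls disjoint, the product vanishes identically, and the statement follows from uniqueness. The only point to verify with any care is that $u_6$, despite involving the nonlocal operator $\langle D\rangle^{-N}$, is nevertheless supported in $B_G(\tilde x; a)$ because of the outer multiplier $\chi_a(\cdot;\tilde x)$; there is no substantive obstacle beyond this observation.
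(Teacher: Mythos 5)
Your proof is correct and takes essentially the same approach as the paper: both observe that any three distinct indices from $\{0,4,5,6\}$ must include one from $\{0,4\}$ and one from $\{5,6\}$, so the product $u_\alpha u_\beta u_\gamma$ has support in $B_G(x_0;a)\cap B_G(\tilde x;a)$, which is empty for $a$ small, and then conclude by uniqueness for the wave equation with vanishing past data. The paper phrases the combinatorics via ordering ($\alpha<\beta<\gamma$ forces $\alpha\in\{0,4\}$, $\gamma\in\{5,6\}$), but this is the same observation.
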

\begin{proof}
    Since $\alpha$, $\beta$, $\gamma$ are distinct, we may assume that $\alpha<\beta<\gamma$. In particular, $\alpha\in \{0,4\}$ and $\gamma\in \{5,6\}$. Then $\supp(u_\alpha)$ and $\supp(u_\gamma)$ do not intersect, and hence, \eqref{box u123} becomes $\Box_g u_{\alpha\beta\gamma} = 0$. Taking into account the trivial initial condition, we conclude $u_{\alpha\beta\gamma} = 0$.
\end{proof}

\begin{lemma}\label{ujab}
    Assume that \eqref{distinction} and \eqref{hat x tilde x} are satisfied. Then, for $a\in (0,h)$ sufficiently small, it follows:
    \begin{enumerate}
        \item For $j\in \{1,2,3\}$ and $\alpha$, $\beta\in \{0,4,5\}$ distinct,
        $$\WF(u_{j\alpha\beta}) = \emptyset;$$
        \item For $j\in \{1,2,3\}$ and $\alpha \in \{0,4,5\}$,
        $$\WF(u_{j\alpha 6})\subset \WF(u_6);$$ 
        \item For $j\in \{1,2,3\}$, $\alpha \in \{0,4\}$, and $\beta\in \{5,6\}$,
        $$ u_{j\alpha\beta} = 0.$$
    \end{enumerate}
\end{lemma}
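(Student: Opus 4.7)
The plan is to split into cases by which of the four indices $\{0,4,5,6\}$ the pair $(\alpha,\beta)$ is drawn from, settle item (3) first by an easy disjoint-supports argument, and use it to reduce items (1) and (2) to one residual subcase each.

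First I would dispatch item (3). For $\alpha \in \{0,4\}$ the wave $u_\alpha = \chi_a(\cdot; x_0)$ is supported in $B_G(x_0;a)$, while for $\beta \in \{5,6\}$ the wave $u_\beta$ is supported in $B_G(\tilde x; a)$. The desirable condition forces $\tilde x \neq x_0$, so for $a < \tfrac12 d_G(x_0, \tilde x)$ these balls are disjoint, $u_\alpha u_\beta \equiv 0$, and the trivial initial condition in \eqref{box u123} yields $u_{j\alpha\beta} = 0$.

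Next, item (1). Every pair $\{\alpha,\beta\} \subset \{0,4,5\}$ containing the index $5$ is already handled by item (3). The only remaining pair is $\{0,4\}$, for which $u_\alpha u_\beta = \chi_a^2(\cdot; x_0)$ is smooth and compactly supported; after shrinking $a$ so that $B_G(x_0; a) \subset \mathcal O$ and invoking Lemma \ref{uj smooth} for smoothness of $u_j|_\mathcal{O}$, the source $-6 u_j u_\alpha u_\beta$ in \eqref{box u123} is smooth and compactly supported, and the forward fundamental solution of $\Box_g$ produces a smooth $u_{j\alpha\beta}$, i.e.\ $\WF(u_{j\alpha\beta}) = \emptyset$.

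Finally, item (2). When $\alpha \in \{0,4\}$, item (3) gives $u_{j\alpha 6} = 0$, so the inclusion is trivial. The substantive case is $\alpha = 5$: here $u_5 = \chi_a(\cdot; \tilde x)$ is smooth, so $\WF(u_5 u_6) \subset \WF(u_6) \subset N^* \T_{\tilde x'}$, and shrinking $a$ so that $B_G(\tilde x; a) \subset \mathcal O$ lets me use Lemma \ref{uj smooth} to multiply by the smooth $u_j|_\mathcal{O}$ without enlarging the wavefront set, giving $\WF(u_j u_5 u_6) \subset \WF(u_6)$. The main obstacle, such as it is, comes next: to pass the inclusion through the wave equation, the wavefront set of the source must be disjoint from $\Sigma(\Box_g)$. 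This I would verify by noting that in the warped-product metric $g = \beta(-dt^2 + \kappa)$ the curve $\T_{\tilde x'}$ is integral to $\partial_t$, so any covector in $N^* \T_{\tilde x'}$ has vanishing $dt$-component and hence strictly positive squared $g$-norm, i.e.\ is spacelike and non-characteristic. Microlocal elliptic regularity for $\Box_g u_{j56} = -6 u_j u_5 u_6$, combined with the trivial Cauchy data precluding any incoming characteristic flowout, then yields $\WF(u_{j56}) \subset \WF(u_j u_5 u_6) \subset \WF(u_6)$.
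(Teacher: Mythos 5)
Your proof is correct and follows essentially the same route as the paper: item (3) via disjoint supports of $u_\alpha$ and $u_\beta$ for small $a$; item (1) via Lemma \ref{uj smooth} giving a smooth compactly supported source; and item (2) via $\WF(u_6)$ being spacelike (conormal to $\T_{\tilde x'}$) and microlocal elliptic regularity for $\Box_g$ off the characteristic set. The only cosmetic difference is that you pre-reduce items (1) and (2) by invoking item (3) to eliminate pairs with one index in $\{0,4\}$ and one in $\{5,6\}$, whereas the paper's argument for (1) treats all pairs $\{\alpha,\beta\}\subset\{0,4,5\}$ uniformly by noting that every such $u_\alpha u_\beta$ is smooth and supported in $\mathcal{O}$; both are valid and the substance is identical.
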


\begin{proof}
    $(1)$ In Section \ref{linearisation}, we established that 
    \begin{equation*}
        \Box_g u_{j\alpha\beta} = -6 u_j u_\alpha u_\beta.
    \end{equation*}
    Recall that $\tilde{x}$, $x_0 \in \mathcal{O}$ and 
    \begin{equation*}
        u_0 = u_4 = \chi_a(\cdot; x_0) \qquad u_5 = \chi_a(\cdot; \tilde{x}).
    \end{equation*}
    Therefore, for sufficiently small $a\in (0,h)$, $u_\alpha$ and $u_\beta$ are supported in $\mathcal{O}$. Hence, Lemma \ref{uj smooth} implies that $u_{j}u_{\alpha}u_{\beta}\in C^\infty(M)$, and consequently, $u_{j\alpha\beta}\in C^\infty(M)$.

    $(2)$ Similarly, we know that 
    \begin{equation*}
        \Box_g u_{j\alpha6} = -6 u_j u_\alpha u_6.
    \end{equation*}
    For sufficiently small $a\in (0,h)$, we have that $\supp({u_\alpha})\subset\mathcal{O}$ and $\supp({u_6})\subset \mathcal{O}$ compactly. By Lemma \ref{uj smooth}, $\left.u_j\right|_{\mathcal{O}} \in C^\infty (\mathcal{O})$, so that $\WF(u_j u_\alpha u_6) \subset \WF(u_6)$. Since $\WF(u_6)$ is spacelike, by ellipticity of $\Box_g$ in spacelike directions, we obtain $\WF(u_{j\alpha 6})\subset \WF(u_6)$. 

    $(3)$ By definition $\supp(u_\alpha)\subset B_a(x_0)$ and $\supp(u_\beta)\subset B_a(\tilde{x})$. Since $x_0$ and $\tilde{x}$ are distinct, for sufficiently small $a\in (0,h)$, it follows that 
    \begin{equation*}
        \Box_g u_{j\alpha\beta} = -6 u_ju_\alpha u_\beta = 0.
    \end{equation*}
    Taking into account the trivial initial condition, we obtain $u_{j\alpha\beta} = 0$.
\end{proof}

\begin{lemma}\label{ujka}
    Assume that \eqref{distinction} and \eqref{hat x tilde x} are satisfied. Then, for $a\in (0,h)$ sufficiently small,
    \begin{enumerate}
        \item For $j$, $k\in \{1,2,3\}$ distinct and $\alpha\in \{0,4,5\}$,
        $$\WF(u_{jk\alpha}) = \emptyset;$$
        \item For $j$, $k\in \{1,2,3\}$ distinct,
        $$\WF(u_{jk 6})\subset \WF(u_6).$$ 
    \end{enumerate}
\end{lemma}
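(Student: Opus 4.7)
The plan is to mirror the structure of the preceding Lemma \ref{ujab}: since $u_{jk\alpha}$ solves $\Box_g u_{jk\alpha} = -6 u_j u_k u_\alpha$ with vanishing past data, I would analyze the wavefront set of the right-hand side, then propagate back to $u_{jk\alpha}$. The key input is Lemma \ref{uj smooth}, which guarantees that $u_j|_{\mathcal{O}}$ and $u_k|_{\mathcal{O}}$ are smooth for sufficiently small $h$, together with the fact that for $a$ small enough the supports of $u_0 = u_4 = \chi_a(\cdot; x_0)$, $u_5 = \chi_a(\cdot; \tilde{x})$, and $u_6$ all lie inside $\mathcal{O}$.

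For part (1), for each $\alpha \in \{0,4,5\}$ the function $u_\alpha$ is smooth and compactly supported in a ball of radius $a$ around either $x_0$ or $\tilde{x}$. Taking $a$ small enough that this support is inside $\mathcal{O}$, Lemma \ref{uj smooth} gives that $u_j u_k u_\alpha$ is a smooth compactly supported function on $M$. Solving $\Box_g u_{jk\alpha} = -6 u_j u_k u_\alpha$ with trivial past data then yields a smooth $u_{jk\alpha}$, so $\WF(u_{jk\alpha}) = \emptyset$. For part (2), the source is $-6 u_j u_k u_6$, where $\WF(u_6) \subset N^*\T_{\tilde{x}'}$ consists of spacelike covectors — the tangent $\partial_t$ to $\T_{\tilde{x}'}$ is timelike, so its conormal is spacelike. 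Since $\supp(u_6) \subset \mathcal{O}$ for small $a$, multiplication by the locally smooth $u_j$ and $u_k$ does not introduce new wavefront, giving $\WF(u_j u_k u_6) \subset \WF(u_6)$.

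To conclude part (2), I would apply ellipticity of $\Box_g$ in spacelike directions together with Hörmander's propagation of singularities theorem: any spacelike point of $\WF(u_{jk6})$ must already lie in $\WF(u_j u_k u_6) \subset \WF(u_6)$ by ellipticity, while any lightlike point of $\WF(u_{jk6})$ would propagate along its full bicharacteristic, which stays inside $\Char(\Box_g)$ and hence never meets the spacelike $\WF(u_j u_k u_6)$; continuing this bicharacteristic into the past yields a contradiction with $u_{jk6} \equiv 0$ for $t < 0$. The main subtlety — essentially identical to the one in Lemma \ref{ujab}(2) — is this last combination of ellipticity and propagation of singularities to rule out spontaneous lightlike singularities, but it is a standard consequence of the trivial past data.
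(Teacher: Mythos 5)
Your proposal is correct and takes essentially the same approach as the paper: part (1) shows the source $u_j u_k u_\alpha$ is $C^\infty_c(M)$ via Lemma~\ref{uj smooth} and the smallness of the supports of $u_\alpha$, and part (2) localizes $\WF(u_j u_k u_6)$ inside the spacelike set $\WF(u_6)$ and invokes the non-characteristic regularity of $\Box_g$. You are somewhat more explicit than the paper's two-line proof of (2): you spell out that elliptic regularity alone only controls the non-lightlike part of $\WF(u_{jk6})$, and that ruling out lightlike singularities requires propagation of singularities along bicharacteristics combined with the vanishing of $u_{jk6}$ in the past; the paper compresses this into the phrase ``elliptic regularity,'' but the underlying argument is the same.
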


\begin{proof}
    $(1)$ By Lemma \ref{uj smooth}, $u_ju_k$ is smooth on $\mathcal{O}$. For sufficiently small $a>0$, we obtain $\supp(u_\alpha)\subset \mathcal{O}$, so that $u_ju_ku_\alpha \in C_0^\infty(M)$. Hence, since $u_{jk\alpha}$ satisfies 
    \begin{equation*}
        \Box_g u_{jk\alpha} = -6u_ju_ku_\alpha
    \end{equation*}
    and the trivial initial condition, we obtain $u_{jk\alpha} \in C^\infty(M)$.

    $(2)$ Similarly, by construction, $u_6$ is supported in $\mathcal{O}$ with spacelike wavefront set. Since $u_{jk6}$ satisfies
    \begin{equation*}
        \Box_g u_{jk6} = -6u_ju_ku_6
    \end{equation*}
    with the trivial initial condition, the elliptic regularity gives $\WF(u_{jk 6})\subset \WF(u_6)$.
\end{proof}

\begin{lemma}
    Let $\alpha \in \{0,4\}$, $\beta\in\{5,6\}$, and $m$, $n \in \{0,\cdots,6\}$ be distinct numbers. Then, for $a\in (0,h)$ sufficiently small,
    \begin{equation*}
        u_\alpha u_{mn\beta} = 0.
    \end{equation*}
\end{lemma}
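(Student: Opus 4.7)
The plan is to show that $u_{mn\beta}$ always vanishes on a neighbourhood of $\operatorname{supp}(u_\alpha) \subset B_G(x_0,a)$, by tracking the support of the source term of the wave equation for $u_{mn\beta}$ and invoking finite propagation speed together with global hyperbolicity.

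First, I would record the supports of the basic waves. Since $u_\alpha = \chi_a(\cdot;x_0)$ for $\alpha \in \{0,4\}$, we have $\supp(u_\alpha) \subset B_G(x_0,a)$. Likewise, whether $\beta = 5$ or $\beta = 6$, the distribution $u_\beta$ has its support contained in $B_G(\tilde x,a)$, because $u_5 = \chi_a(\cdot;\tilde x)$ and $u_6 = \chi_a(\cdot;\tilde x)\langle D\rangle^{-N}\delta_{\T_{\tilde x'}}$ both inherit the cutoff $\chi_a(\cdot;\tilde x)$.

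Next, I would derive the equation for $u_{mn\beta}$. If the three indices $m,n,\beta$ are all distinct, then \eqref{box u123} gives directly $\Box_g u_{mn\beta} = -6 u_m u_n u_\beta$. If instead one of $m,n$ coincides with $\beta$ (say $m = \beta$, since $m \ne n$ excludes the case $m=n=\beta$), a short Taylor expansion of $\partial_{\epsilon_\beta}^2\partial_{\epsilon_n}(u_\epsilon^3)$ shows that all terms containing the factor $u_\epsilon$ vanish at $\epsilon = 0$, leaving $\Box_g u_{\beta\beta n} = -6 u_\beta^2 u_n$; an analogous identity holds if $n = \beta$. In every case the source contains $u_\beta$ as a factor, so
\begin{equation*}
\supp\bigl(\Box_g u_{mn\beta}\bigr) \subset \supp(u_\beta) \subset B_G(\tilde x,a).
\end{equation*}

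Combined with the trivial Cauchy data $u_{mn\beta}|_{(-\infty,0)\times M_0}=0$, finite propagation speed for the wave operator $\Box_g$ on the globally hyperbolic manifold $(M,g)$ implies $\supp(u_{mn\beta}) \subset J^+(B_G(\tilde x,a))$. It remains to verify that for all sufficiently small $a>0$, $B_G(x_0,a) \cap J^+(B_G(\tilde x,a)) = \emptyset$. By the desirable condition \eqref{desirable} we have $\tilde x \ne x_0$, while $\tilde x \in \pi\circ\FLO^+(\xi_0)$ forces $\tilde x \in J^+(x_0)$. Global hyperbolicity then rules out $x_0 \in J^+(\tilde x)$, i.e.\ $x_0 \notin J^+(\tilde x)$. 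Since $J^+$ is closed on a globally hyperbolic manifold, a standard compactness argument (if no such $a$ existed, take a sequence $a_k \downarrow 0$ together with points $y_k \in B_G(x_0,a_k)\cap J^+(B_G(\tilde x,a_k))$ and pass to the limit to get $x_0 \in J^+(\tilde x)$, a contradiction) yields the desired disjointness for small $a$. Therefore $\supp(u_\alpha) \cap \supp(u_{mn\beta}) = \emptyset$, hence $u_\alpha u_{mn\beta} = 0$.

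The only mildly subtle step is the last one — verifying that the separation $x_0 \notin J^+(\tilde x)$ upgrades to the disjointness of the $a$-thickenings $B_G(x_0,a)$ and $J^+(B_G(\tilde x,a))$; everything else reduces to the support-tracking of $u_\alpha$, $u_\beta$ and a short algebraic check that the source of $u_{mn\beta}$ always carries a copy of $u_\beta$.
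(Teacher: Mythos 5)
Your proof is correct and follows essentially the same route as the paper's, which is a brief support-tracking argument: the source of $u_{mn\beta}$ carries a factor of $u_\beta$ and hence is supported in $B_G(\tilde x;a)$, so by finite propagation speed $\supp(u_{mn\beta}) \subset J^+(B_G(\tilde x;a))$, which misses $\supp(u_\alpha) \subset B_G(x_0;a)$ for $a$ small. You spell out more carefully the last step (that $x_0 \notin J^+(\tilde x)$ because $x_0 < \tilde x$ and global hyperbolicity forbids closed causal curves, then a compactness argument to pass to the $a$-thickenings), which the paper leaves implicit. One small remark: the digression about the case $m = \beta$ is unnecessary --- the multi-index notation $u_{mn\beta}$ in this paper is only defined for pairwise distinct indices (it denotes $\partial_{\epsilon_m}\partial_{\epsilon_n}\partial_{\epsilon_\beta} u_\epsilon|_{\epsilon=0}$ with $m,n,\beta$ distinct), so that branch never arises; it is harmless here, but the formula $\Box_g u_{\beta\beta n} = -6u_\beta^2 u_n$ you guess for the repeated-index case is not an object used in the paper.
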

\begin{proof}
    By the finite speed of the wave propagation, $\supp(u_{mn\beta})$ is a subset of the future causal cone of $B_G(\tilde x; a)$, and hence, does not intersect $\supp(u_\alpha)$.
\end{proof}

\begin{lemma}\label{five_WF}
    Assume that \eqref{distinction} and \eqref{hat x tilde x} are satisfied. Then, for $a\in (0,h)$ sufficiently small,
    \begin{enumerate}
        \item For $j$, $k\in \{1,2,3\}$ distinct,
        $$\WF(u_{jk045}) = \emptyset;$$
        \item  For $j$, $k\in \{1,2,3\}$ distinct and $\alpha$, $\beta\in \{0,4,5\}$ distinct,
        $$\WF(u_{jk\alpha\beta6})\subset \WF(u_6);$$
        \item For $j\in \{1,2,3\}$,
        $$\WF(u_{j0456}) \subset \WF(u_6);$$
    \end{enumerate}
\end{lemma}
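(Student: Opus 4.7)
The plan is to expand the 5-fold linearization identity \eqref{box u01234}, which writes $\Box_g u_{jkl\alpha\beta}$ as a symmetric sum of products $u_a u_b u_{cde}$ ranging over bipartitions of the five indices into a pair and a triple. I will reduce each such product to one of three shapes: identically zero, an element of $C_c^\infty(M)$, or a distribution with $\WF$ contained in $\WF(u_6)$. The triple factor is controlled by Lemmas \ref{uabc}, \ref{ujka}, and \ref{ujab}; the pair factor is controlled by Lemma \ref{uj smooth} (which makes each $u_j$, $j\in\{1,2,3\}$, smooth on $\mathcal O$) together with the disjoint-support properties that for $a$ small, $\supp u_0 = \supp u_4 \subset B_G(x_0;a)$ while $\supp u_5, \supp u_6 \subset B_G(\tilde x;a)$, and these two balls are disjoint because $x_0\neq \tilde x$. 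Since $\WF(u_6)$ is spacelike, once every RHS term is shown to have $\WF$ inside $\WF(u_6)$, ellipticity of $\Box_g$ on spacelike covectors together with the zero initial condition will yield the claimed containments for the 5-fold linearization.

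For part (1), the index set $\{j,k,0,4,5\}$ contains no $6$, so every triple is smooth or zero by Lemmas \ref{ujka}(1), \ref{ujab}(1), and \ref{uabc}. The complementary pair is either $u_0 u_4 = \chi_a^2$, a product of cutoffs based at $x_0$ and $\tilde x$ (hence zero by disjoint supports), or a conormal factor $u_j$ or $u_k$ times a smooth cutoff supported in $\mathcal O$, which is smooth by Lemma \ref{uj smooth}. Every RHS term is therefore in $C_c^\infty(M)$, so $u_{jk045}\in C^\infty(M)$ by standard well-posedness with smooth source and zero data.

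For parts (2) and (3) the index set contains $6$, so I split bipartitions according to whether $6$ lies in the triple or in the pair. When $6$ is in the triple, Lemmas \ref{ujka}(2), \ref{ujab}(2)--(3), and \ref{uabc} render the triple either zero or with $\WF\subset\WF(u_6)$, while the complementary pair consists of cutoffs (smooth, or zero by disjoint supports) or of one cutoff and one conormal factor (smooth by Lemma \ref{uj smooth}). When $6$ is in the pair, Lemmas \ref{ujka}(1), \ref{ujab}(1), and \ref{uabc} make the triple smooth or zero, and the pair is $u_6$ multiplied by either a smooth cutoff or a conormal factor that Lemma \ref{uj smooth} renders smooth on $\supp u_6 \subset \mathcal O$; in either sub-case the pair has $\WF\subset\WF(u_6)$. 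Combining, $\WF(\Box_g u_\bullet)\subset\WF(u_6)$ for $u_\bullet\in\{u_{jk\alpha\beta 6},u_{j0456}\}$, and the spacelike ellipticity argument then closes both claims. Part (3) collapses further because Lemma \ref{ujab}(3) forces four of the six $u_{j\alpha\beta}$-type triples to vanish outright, leaving only $u_{j04}$ (smooth, paired with $u_5 u_6$) and $u_{j56}$ (with $\WF\subset \WF(u_6)$, paired with $u_0 u_4 = \chi_a^2$) as surviving contributors.

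The main obstacle is bookkeeping: the symmetric sum over $\sigma\in S_5$ in \eqref{box u01234} has many terms, and it would be easy to overlook a partition whose pair couples two genuinely conormal factors $u_j, u_k$ with a rough triple, a product that is not even well-defined microlocally. I will therefore organize the enumeration by bipartition rather than by individual permutation, and verify in every case that at least one factor of the pair is a smooth cutoff supported in $\mathcal O$ so that Lemma \ref{uj smooth} applies; this is precisely what ensures the apparently dangerous product $u_j u_k$ never actually meets a non-smooth triple.
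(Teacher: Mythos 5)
Your plan reproduces the paper's proof: expand the five-fold linearization identity \eqref{box u01234}, classify each bipartition term using Lemmas \ref{uabc}, \ref{ujab}, \ref{ujka}, and \ref{uj smooth} as zero, smooth, or having wavefront inside $\WF(u_6)$, then close via microlocal regularity for $\Box_g$ in spacelike directions with zero initial data. The structure is the same.

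One factual slip in your part (1): you claim ``$u_0 u_4 = \chi_a^2$, a product of cutoffs based at $x_0$ and $\tilde x$ (hence zero by disjoint supports).'' This is wrong: by \eqref{f0f4} we have $u_0 = u_4 = \chi_a(\cdot;x_0)$, both based at the \emph{same} point $x_0$, so $u_0 u_4 = \chi_a(\cdot;x_0)^2 \ne 0$. (You treat $u_0 u_4$ correctly as a nonzero cutoff in your part (3), so part (1) looks like a slip rather than a misunderstanding.) Fortunately the error is harmless here because the complementary triple $u_{jk5}$ is smooth by Lemma \ref{ujka}(1), so the term $u_0 u_4 u_{jk5}$ is smooth regardless, and the conclusion of part (1) stands. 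Relatedly, your stated safeguard that ``at least one factor of the pair is a smooth cutoff supported in $\mathcal O$'' does not hold for the pair $\{j,k\}$; what saves that bipartition is that its complementary triple $u_{045}$ (or $u_{\alpha\beta 6}$ in part (2)) vanishes by Lemma \ref{uabc}. The correct invariant you want is: either at least one pair factor is a compactly supported smooth cutoff so that Lemma \ref{uj smooth} tames the product, or the triple factor is identically zero.
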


\begin{proof}
    $(1)$ If $a\in (0,h)$ is sufficiently small, then the supports of $u_0$ and $u_4$ do not intersect the support of $u_5$, so that
    \begin{equation*}
        u_0u_5 u_{jk4} = u_4u_5 u_{jk0} = 0.
    \end{equation*}
    Moreover, by Lemma \eqref{ujab}, 
    \begin{equation*}
        u_ju_k u_{045} = u_ju_0 u_{k45} =u_ju_4 u_{k05} = u_ku_0 u_{j45} = u_ku_4 u_{j05} = 0.
    \end{equation*}
    Therefore, \eqref{box u01234} gives
    \begin{equation*}
        \Box_g u_{jk045} = -2 ( u_{j}u_5 u_{k04} + u_{k}u_5 u_{j04} + u_0u_4u_{jk5}). 
    \end{equation*}
    Note that $u_{5}$ is supported in $\mathcal{O}$ and $u_j$, $u_k$ are smooth in $\mathcal{O}$, moreover, $u_0$, $u_4 \in C^\infty(M)$. Therefore, Lemmas \ref{ujab} and \ref{ujka} imply that the right-hand side of the last equation is smooth so that $(1)$ holds.

    $(2)$ Without lost of generality, we assume that $\alpha < \beta$ so that $\alpha\in \{0,4\}$. As is the previous part, for sufficiently small $a\in (0,h)$, it follows that
    \begin{equation*}
        u_\alpha u_6u_{jk\beta} = u_j u_\beta u_{k\alpha 6} = u_k u_\beta u_{j\alpha 6} = u_j u_k u_{\alpha \beta 6} = 0.
    \end{equation*}
    For the last term, we used Lemma \ref{uabc}. Therefore, \eqref{box u01234} becomes
    \begin{equation*}
        \Box_g u_{jk\alpha\beta 6} = -( u_j u_\alpha u_{k\beta 6} + u_j u_6 u_{k\alpha \beta} + u_k u_\alpha u_{j\beta 6} + u_k u_6 u_{j\alpha\beta} + u_\alpha u_\beta u_{jk 6} + u_\beta u_6 u_{jk\alpha})
    \end{equation*}
    As in the previous part, $u_j u_\alpha$, $u_k u_\alpha$, $u_\alpha u_\beta$, $u_{j\alpha\beta}$, and $u_{jk\alpha}$ are smooth. By Lemmas \ref{ujab} and \ref{ujka}, $\WF(u_{k\beta 6})$, $\WF(u_{jk 6})\subset \WF(u_6)$. Therefore, the wavefront set of the right-hand side belongs to $\WF(u_6)$, and hence, the ellipticity of $\Box_g$ in spacelike directions gives $(2)$.

    $(3)$ By Lemmas \ref{uabc} and \ref{ujab}, \eqref{box u01234} becomes 
    \begin{equation*}
        \Box_g u_{j0456} = -2(u_5 u_6u_{j04}  + u_0u_4u_{j56}).
    \end{equation*}
    By Lemma \ref{ujab}, $u_{j04}$ is smooth and $\WF(u_{j56})\subset \WF(u_6)$. Hence, $\WF(u_{j0456}) \subset \WF(u_6)$. Here, we also used that $u_0$, $u_4$, and $u_5$ are smooth.
\end{proof}

\begin{lemma}\label{five_zeros}
    Let $\alpha \in \{0,4\}$, $\beta\in\{5,6\}$, and $m$, $n$, $\tau$, $\gamma \in \{0,\cdots,6\}$ be distinct numbers. Then, for $a\in (0,h)$ sufficiently small and , it follows
    \begin{equation*}
        u_\alpha u_{mn\tau\gamma\beta} = 0.
    \end{equation*}
\end{lemma}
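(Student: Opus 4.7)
The plan is to argue by finite speed of propagation, tracking the support of $u_{mn\tau\gamma\beta}$ and verifying that it stays in the causal future of $\tilde x$, which for small $a$ is disjoint from the support of $u_\alpha$ near $x_0$.

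First I would expand the right-hand side of \eqref{box u01234} for the indices $m,n,\tau,\gamma,\beta$:
\begin{equation*}
    \Box_g u_{mn\tau\gamma\beta} = -\sum_{\sigma\in S_5} u_{\sigma(m)} u_{\sigma(n)} u_{\sigma(\tau)\sigma(\gamma)\sigma(\beta)}.
\end{equation*}
The key observation is that $\beta\in\{5,6\}$ appears exactly once in each summand, either as one of the two singleton indices $\sigma(m),\sigma(n)$ or as one of the three indices of the triple linearization. Recall that $u_5=\chi_a(\cdot;\tilde x)$ is supported in $B_G(\tilde x;a)$ and $u_6=\chi_a(\cdot;\tilde x)\langle D\rangle^{-N}\delta_{\T_{\tilde x'}}$ is likewise supported in $B_G(\tilde x;a)$ because of the compactly supported cutoff $\chi_a(\cdot;\tilde x)$.

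Next I would split into two cases. If $\beta\in\{\sigma(m),\sigma(n)\}$, then the product has a factor $u_\beta$, hence is supported in $B_G(\tilde x;a)$. If instead $\beta\in\{\sigma(\tau),\sigma(\gamma),\sigma(\beta)\}$, I would apply finite speed of propagation to \eqref{box u123} satisfied by $u_{\sigma(\tau)\sigma(\gamma)\sigma(\beta)}$: its source $u_{\sigma(\tau)}u_{\sigma(\gamma)}u_{\sigma(\beta)}$ contains a factor $u_\beta$ supported in $B_G(\tilde x;a)$, so the whole source is supported in $B_G(\tilde x;a)$, and thus $\operatorname{supp}(u_{\sigma(\tau)\sigma(\gamma)\sigma(\beta)})\subset J^+(B_G(\tilde x;a))$. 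In either case, each summand on the right-hand side is supported in $J^+(B_G(\tilde x;a))$, and by a second application of finite speed of propagation to $u_{mn\tau\gamma\beta}$ with trivial initial data, we conclude $\operatorname{supp}(u_{mn\tau\gamma\beta})\subset J^+(B_G(\tilde x;a))$.

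To finish, recall that $\tilde x\in \pi\circ\FLO^+(\xi_0)\cap \Omega$ with $\tilde x\neq x_0$, so $\tilde x$ lies strictly in the causal future of $x_0$. By global hyperbolicity there are no closed causal curves, hence $x_0\notin J^+(\tilde x)$; since $J^+(\tilde x)$ is closed, one can choose $a>0$ so small that $B_G(x_0;a)\cap J^+(B_G(\tilde x;a))=\emptyset$. As $u_\alpha=\chi_a(\cdot;x_0)$ is supported in $B_G(x_0;a)$, the supports of $u_\alpha$ and $u_{mn\tau\gamma\beta}$ are disjoint, so $u_\alpha u_{mn\tau\gamma\beta}=0$. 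The only delicate point is ensuring that the triple-linearization factor really does obey the support bound in the causal future of $B_G(\tilde x;a)$; this is exactly the kind of finite-speed bookkeeping already carried out in the unlabeled lemma preceding Lemma \ref{five_WF}, so no new obstruction appears.
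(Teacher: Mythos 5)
Your proposal is correct and follows essentially the same route as the paper: both arguments use the explicit form of $u_5,u_6$ to place the source for $u_{mn\tau\gamma\beta}$ in $J^+(B_G(\tilde x;a))$, invoke finite speed of propagation twice, and then choose $a$ small enough that this future cone misses $\supp(u_\alpha)\subset B_G(x_0;a)$. Your write-up is somewhat more detailed (splitting explicitly on where $\beta$ lands under $\sigma$, and spelling out why $x_0\notin J^+(\tilde x)$ via closedness of the causality relation), but there is no substantive difference in method.
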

\begin{proof}
    We recall that 
    \begin{equation*}
        \Box_g u_{mn\tau\gamma\beta} = - \sum_{\sigma\in S_4}  u_{\sigma(m)}u_\beta u_{\sigma(n)\sigma(\tau)\sigma(\gamma)}
        - \sum_{\sigma\in S_4}  u_{\sigma(m)}u_{\sigma(n)}u_{\sigma(\tau)\sigma(\gamma)\beta}
    \end{equation*}
    where $S_4$ is the permutation group on $\{m,n,\tau,\gamma\}$. The first sum is supported in $B_G(\tilde x;a)$, while the second sum is supported in the future causal cone of $B_G(\tilde x;a)$. Therefore, $\supp(u_{mn\tau\gamma\beta})$ is a subset of the future causal cone of $B_G(\tilde x;a)$. Therefore, for sufficiently small $a>0$, $\supp(u_{mn\tau\gamma\beta})$ does not intersect $\supp(u_\alpha)$. 
\end{proof}

We set $u_{\rm reg}$ to be the solution of 
\begin{eqnarray}
\label{ureg}
\Box_g u_{\rm reg} = \sum_{\sigma\in S_{7},\sigma\notin S_5} u_{\sigma(5)}u_{\sigma(6)} u_{\sigma(0)\sigma(1)\sigma(2)\sigma(3) \sigma(4)} +  \sum_{\sigma\in S_{7}} u_{\sigma(0)}u_{\sigma(1)\sigma(2)\sigma(3)}u_{\sigma(4)\sigma(5)\sigma(6)}
\end{eqnarray}
where we use $S_5\subset S_7$ to denote the subset of $S_7$ which maps $\{0,\dots, 4\}$  to itself. Then we can write
\begin{equation}\label{decomp_u0123456}
    u_{0123456} = u_{\rm reg} + u_{\rm sing},
\end{equation}
where
\begin{equation}\label{using}
    \begin{cases}
        \Box_g u_{\rm sing} = C u_{5} u_6 u_{01234},\\
        u_{\rm sing}\arrowvert_{(-\infty, 0)\times M_0} = 0,
    \end{cases}
\end{equation}
for some $C\in \mathbb{N}$.

\begin{lemma}\label{ureg_is_smooth}
    Assume that \eqref{distinction} and \eqref{hat x tilde x} are satisfied. Then, for sufficiently small $a\in(0,h)$, $u_{\rm reg}$ is smooth at $\hat{x}(\tilde x)$.
\end{lemma}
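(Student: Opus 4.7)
The plan is to show that the source on the right-hand side of \eqref{ureg}, call it $F$, has wavefront set whose forward bicharacteristic flowout under $\Box_g$ avoids $\hat x(\tilde x)$; the conclusion then follows from propagation of singularities. I will enumerate the terms in the two sums defining $F$ and apply the previous lemmas case by case. The type $(I)$ terms have the form $u_{\sigma(5)}u_{\sigma(6)}u_{\sigma(0)\sigma(1)\sigma(2)\sigma(3)\sigma(4)}$ with $\sigma\notin S_5$, so the five-tensor contains at least one index from $\{5,6\}$; the type $(II)$ terms have the form $u_{\sigma(0)}u_{\sigma(1)\sigma(2)\sigma(3)}u_{\sigma(4)\sigma(5)\sigma(6)}$.

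For type $(II)$, Lemma \ref{uabc} immediately kills any triple whose three indices all lie in $\{0,4,5,6\}$, so each surviving triple contains at least one index from $\{1,2,3\}$. This rules out $u_{123}$ ever appearing as a triple, since its companion triple would then have all indices in $\{0,4,5,6\}$. The remaining triples fall into the forms $u_{jk\alpha}$ and $u_{j\alpha\beta}$ covered by Lemmas \ref{ujab} and \ref{ujka}: each is either zero, smooth, or has wavefront contained in $\WF(u_6)$. Combining this with the smoothness of $u_0,u_4,u_5$, with $u_j\big|_{\mathcal O}\in C^\infty(\mathcal O)$ for $j\in\{1,2,3\}$ from Lemma \ref{uj smooth}, and with the disjointness $B_G(x_0;a)\cap B_G(\tilde x;a)=\emptyset$ for small $a$, every surviving type $(II)$ term has wavefront contained in $\WF(u_6)\cup\bigcup_{j=1}^{3}\WF(u_j)$, up to Hörmander pairwise sums.

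For type $(I)$, I analyze each five-tensor through \eqref{box u01234}. The same bookkeeping, invoking Lemmas \ref{uabc}, \ref{ujab}, \ref{ujka}, \ref{five_zeros} and support disjointness, shows that the source driving each five-tensor $u_{ijklm}$ occurring in $F$ is either smooth or has wavefront contained in $\WF(u_6)$; ellipticity of $\Box_g$ at spacelike covectors then transfers this to the five-tensor itself. The only exception is $u_{12356}$, whose source picks up $u_5u_6u_{123}$ with genuinely lightlike wavefront; however $u_{12356}$ enters the first sum only through $u_0u_4u_{12356}$, and $\supp(u_0u_4)\subset B_G(x_0;a)$ is disjoint from $\supp(u_{12356})\subset J^+(B_G(\tilde x;a))$ for small $a$, so the term vanishes. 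Hence every surviving type $(I)$ term also has wavefront in $\WF(u_6)\cup\bigcup_{j=1}^{3}\WF(u_j)$.

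To conclude, $\WF(u_6)$ is conormal to the timelike curve $\T_{\tilde x'}$, hence spacelike, and is based in $B_G(\tilde x;a)$, which excludes $\hat x(\tilde x)$ for small $a$; $\Box_g$ is elliptic there, so no null bicharacteristic emanates from these covectors. Each $\WF(u_j)$ lies along $\pi\circ\FLO^+(\xi_j)$, and by \eqref{hat x tilde x} the point $\hat x(\tilde x)$ is excluded from this flowout. The Hörmander pairwise sums $\WF(u_j)+\WF(u_k)$ occur only over $\pi\circ\FLO^+(\xi_j)\cap\pi\circ\FLO^+(\xi_k)$, which likewise avoids $\hat x(\tilde x)$; moreover the resulting covectors $\xi_j+\xi_k$ are non-lightlike because two distinct future-pointing lightlike covectors at a common point of a $(1{+}3)$-dimensional Lorentzian manifold satisfy $g(\xi_j,\xi_k)<0$, so $\Box_g$ is elliptic there as well. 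Propagation of singularities then gives $\hat x(\tilde x)\notin\singsupp(u_{\rm reg})$. The main obstacle is the combinatorial bookkeeping over the many permutations, each requiring a slightly different application of the preceding lemmas; the geometric content is that only the structure $u_5u_6u_{01234}$, isolated inside $u_{\rm sing}$, lets a three-wave interaction inside $u_{01234}$ propagate along $\FLO^+(\xi_0)$, meet $u_5u_6$ at $\tilde x$, and produce a singularity at $\hat x(\tilde x)$.
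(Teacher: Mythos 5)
Your proof is correct and follows essentially the same route as the paper: decompose $\Box_g u_{\rm reg}$ into the two permutation sums, reduce each term using the vanishing and wavefront-set lemmas (\ref{uabc}, \ref{ujab}, \ref{ujka}, \ref{five_WF}, \ref{five_zeros}), conclude $\WF(\Box_g u_{\rm reg})$ is contained in $\WF(u_6)\cup\bigcup_j\WF(u_j)$ plus pairwise sums, and then apply propagation of singularities together with Lemma \ref{uj smooth} and condition \eqref{hat x tilde x}. If anything, your final step is slightly more careful than the paper's one-line conclusion, since you explicitly note that the covectors in $\WF(u_6)$ and in the H\"ormander sums $\WF(u_j)+\WF(u_k)$ are non-lightlike (hence handled by elliptic regularity rather than by bicharacteristic propagation), and that none of these three pieces is based at $\hat x(\tilde x)$.
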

\begin{proof}
    Due to Lemma, we obtain
    \begin{multline*}
        \sum_{\sigma\in S_{7},\sigma\notin S_5} u_{\sigma(5)}u_{\sigma(6)} u_{\sigma(0)\sigma(1)\sigma(2)\sigma(3) \sigma(4)}  =  A\sum_{\sigma\in S_3} \left( u_{\sigma(j)}u_{\sigma(k)}u_{\sigma(l)0456}\right.\\
        \left.+ u_{\sigma(j)}u_{5}u_{\sigma(k)\sigma(l)046}
         + u_{\sigma(j)}u_{6}u_{\sigma(k)\sigma(l)045}\right),
    \end{multline*}
    for some $A\in \mathbb{N}$, where $S_3$ is the permutation group on $\{j,k,l\}$. Lemma \ref{five_WF} implies that the wavefront set of the right-hand side belongs to 
    \begin{equation*}
        \WF(u_6)\cup \WF(u_1) \cup \WF(u_2) \cup \WF(u_3) \cup \left(\bigcup_{j,k}^3 (\WF(u_j) + \WF(u_k)) \right).
    \end{equation*}
    Similarly, Lemma \ref{five_zeros} implies
    \begin{equation*}
        \sum_{\sigma\in S_{7}} u_{\sigma(0)}u_{\sigma(1)\sigma(2)\sigma(3)}u_{\sigma(4)\sigma(5)\sigma(6)} = B\sum_{\sigma\in S_3} u_{\sigma(j)} u_{\sigma(k)04} u_{\sigma(l)56}.
    \end{equation*}
    Due to Lemma \ref{ujab}, the wavefront set of the right-hand side belongs to
    \begin{equation*}
        \WF(u_6)\cup \WF(u_1)\cup \WF(u_2)\cup \WF(u_3).
    \end{equation*}
    We summarize,
    \begin{equation*}
        \WF (\Box_g u_{\rm reg}) \subset \WF(u_6)\cup \WF(u_1) \cup \WF(u_2) \cup \WF(u_3) \cup \left(\bigcup_{j,k}^3 (\WF(u_j) + \WF(u_k)) \right),
    \end{equation*}
    and hence,
    \begin{equation*}
        \WF (\Box_g u_{\rm reg}) \cap L^*M \subset \WF(u_1) \cup \WF(u_2) \cup \WF(u_3),
    \end{equation*}
    As the wavefront set of $u_1$, $u_2$, and $u_3$ are lightlike flowouts from point sources, we conclude that $u'_{\rm reg}$ solving \eqref{ureg} must satisfy
    $$\WF(u_{\rm reg}) \subset \WF(u_1)\cup \WF(u_2) \cup \WF(u_3).$$
The above inclusion in conjunction with Lemma \ref{uj smooth} completes the proof.
\end{proof}

We will also need more information about the singularity of $u_{01234}$. We set $v_{\rm reg}$ to be the solution of 
\begin{equation*}
\begin{cases}
    \Box_g v_{\rm reg} = \sum_{\sigma \in S_5\backslash S_3} u_{\sigma(0)} u_{\sigma(4)} u_{\sigma(1)\sigma(2)\sigma(3)}\\
    v_{\rm reg}\arrowvert_{(-\infty, 0)\times M_0} = 0,
\end{cases}
\end{equation*}
where $S_3\subset S_5$ is the subset of $S_5$ which maps the set $\{1,2,3\}$ to itself. Then we can write
\begin{equation}\label{decomp_u01234}
    u_{01234} = v_{\rm reg} + v_{\rm sing}
\end{equation}
where
\begin{equation*}
\begin{cases}
    \Box_g v_{\rm sing} = C u_0u_4u_{01234},\\
    v_{\rm sing}\arrowvert_{(-\infty, 0)\times M_0} = 0,
\end{cases}
\end{equation*}
for some $C\in \mathbb N$. Next, we show that $v_{\rm reg}$ is smooth near $x_0$.

\begin{lemma}
\label{vreg is smooth}

Assume that \eqref{distinction} and \eqref{hat x tilde x} are satisfied. Then there exists an open neighbourhood $\mathcal O$ of $x_0$ such that, for all $0<a<h$ sufficiently small 
$$\singsupp(v_{\rm reg}) \cap {\mathcal O}= \emptyset$$.
\end{lemma}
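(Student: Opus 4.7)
The plan is to analyze the source driving $\Box_g v_{\rm reg}$, namely
\begin{equation*}
F := \sum_{\sigma\in S_5\setminus S_3} u_{\sigma(0)} u_{\sigma(4)} u_{\sigma(1)\sigma(2)\sigma(3)},
\end{equation*}
and then invoke elliptic regularity off the characteristic set together with propagation of null bicharacteristic singularities for $\Box_g$. Since $\sigma\in S_5\setminus S_3$ forces at least one of $\sigma(1),\sigma(2),\sigma(3)$ to lie in $\{0,4\}$, the sum splits into two types. Type (A) consists of terms where exactly one of $\{0,4\}$ appears in the triple subscript, so each such term has the form $u_\alpha u_m u_{np\beta}$ with $\alpha,\beta\in\{0,4\}$ distinct and $m,n,p\in\{1,2,3\}$ distinct. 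Type (B) consists of terms where both $0$ and $4$ appear in the triple subscript, so each such term has the form $u_m u_n u_{l04}$ with $m,n,l\in\{1,2,3\}$ distinct.

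For Type (A), the factor $u_\alpha=\chi_a(\cdot;x_0)$ is smooth and supported in $B_G(x_0;a)$; taking $\mathcal{O}$ a small enough neighbourhood of $x_0$ and $a\in(0,h)$ small enough that $B_G(x_0;a)\subset \mathcal{O}$, each term vanishes outside $\mathcal{O}$. Inside $\mathcal{O}$, $u_\alpha$ is smooth by construction, $u_m\in C^\infty(\mathcal{O})$ by Lemma \ref{uj smooth} together with condition \eqref{distinction}, and $u_{np\beta}\in C^\infty(M)$ by Lemma \ref{ujka}(1); hence every Type (A) term is smooth on $M$. For Type (B), Lemma \ref{ujab}(1) gives $u_{l04}\in C^\infty(M)$, so that
\begin{equation*}
\WF(u_m u_n u_{l04}) \subset \WF(u_m)\cup \WF(u_n) \cup \bigl(\WF(u_m)+\WF(u_n)\bigr).
\end{equation*}
At a common base point, the sum $\eta_m+\eta_n$ of two future-pointing null covectors associated with distinct bicharacteristic flows satisfies $(\eta_m+\eta_n,\eta_m+\eta_n)_g=2(\eta_m,\eta_n)_g$, which vanishes only when $\eta_m$ and $\eta_n$ are collinear; since the bicharacteristics through $\xi_m$ and $\xi_n$ are distinct, this is impossible. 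Hence $\bigl(\WF(u_m)+\WF(u_n)\bigr)\cap \Sigma(\Box_g)=\emptyset$, and aggregating,
\begin{equation*}
\WF(F) \cap \Sigma(\Box_g) \subset \WF(u_1)\cup \WF(u_2)\cup \WF(u_3).
\end{equation*}

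By Lemma \ref{linear wave} each $\WF(u_j)$ is a union of forward null bicharacteristic flowouts and is therefore invariant under the forward null flow. Since $v_{\rm reg}$ vanishes in the past, H\"ormander's propagation of singularities (Theorem 23.2.9 in \cite{Hormander:2007aa}) yields $\WF(v_{\rm reg})\cap \Sigma(\Box_g) \subset \WF(u_1)\cup \WF(u_2)\cup \WF(u_3)$, while ellipticity of $\Box_g$ off $\Sigma(\Box_g)$ yields $\WF(v_{\rm reg})\setminus \Sigma(\Box_g) \subset \WF(F)\setminus \Sigma(\Box_g)$. Shrinking $\mathcal{O}$ if necessary, Lemma \ref{uj smooth} and condition \eqref{distinction} give $\WF(u_j)\cap T^*\mathcal{O}=\emptyset$ for $j=1,2,3$; the same reasoning shows $\WF(F)\cap T^*\mathcal{O}=\emptyset$ (Type (A) smooth, Type (B) having WF controlled by $\WF(u_m)\cup\WF(u_n)\cup(\WF(u_m)+\WF(u_n))$ which is empty on $\mathcal{O}$). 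Therefore $\WF(v_{\rm reg})\cap T^*\mathcal{O}=\emptyset$ and $\singsupp(v_{\rm reg})\cap \mathcal{O}=\emptyset$. The main obstacle is the exclusion of null directions from $\WF(u_m)+\WF(u_n)$: without this step a two-wave interaction at an intersection of distinct lightlike geodesics could generate a null singularity that propagates forward through $\Box_g$ and potentially reaches $x_0$; the Lorentzian computation above, combined with the distinctness of the three bicharacteristics, is what rules this out.
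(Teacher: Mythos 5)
Your proof is correct and follows essentially the same route as the paper: identify the two types of terms in the source for $\Box_g v_{\rm reg}$, bound $\WF(F)$ by $\bigcup_j \WF(u_j) \cup \bigcup_{j,k}(\WF(u_j)+\WF(u_k))$ via Lemmas \ref{ujab} and \ref{ujka}, and then combine elliptic regularity off $\Sigma(\Box_g)$ with propagation along null bicharacteristics and the fact that each $\WF(u_j)$ is a forward flowout missing $T^*\mathcal O$ by Lemma \ref{uj smooth}. The one step you spell out more fully than the paper is the Lorentzian computation $g(\eta_m+\eta_n,\eta_m+\eta_n)=2g(\eta_m,\eta_n)$ showing $(\WF(u_m)+\WF(u_n))\cap\Sigma(\Box_g)=\emptyset$; the paper leaves this implicit here (it states the equivalent fact $(\WF(u_j)+\WF(u_k))\cap L^*M\subset\WF(u_j)\cup\WF(u_k)$ only later, in the proof of Proposition \ref{necessary condition}), so your explicitness is a minor improvement rather than a different approach.
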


\begin{proof} 
The source terms on the right side are of the form $u_{jk\alpha}u_\beta u_l$ and $u_{j04} u_k u_l$
where $j,k,l\in\{1,2,3\}$ and $\alpha,\beta \in \{0,4\}$ are distinct. Using Lemma \ref{ujab} and Lemma \ref{ujka} we see that
$$\WF\left(\sum_{\sigma \in S_5\backslash S_3} u_{\sigma(0)} u_{\sigma(4)} u_{\sigma(1)\sigma(2)\sigma(3)}\right) \subset \bigcup_{j=1}^3 \WF(u_j)\cup \bigcup_{j,k=1}^3 \left(\WF(u_j) + \WF(u_k)\right).$$
By Lemma \ref{uj smooth} we have that there exists a small neighbourhood $\mathcal O$ containing $x_0$ such that 
\begin{eqnarray}
\label{no wf in T*O}
\WF(u_j)\cap T^*{\mathcal O} = \emptyset,\ j = 1,2,3.
\end{eqnarray}
Then
$$\WF\left(\sum_{\sigma \in S_5\backslash S_3} u_{\sigma(0)} u_{\sigma(4)} u_{\sigma(1)\sigma(2)\sigma(3)}\right)  \cap T^*{\mathcal O} = \emptyset.$$

So for this choice of $\mathcal O$, if $\xi\in T^*\mathcal O$ is in the wavefront of  $v_{\rm reg}$ then it must be in $L^*\mathcal O$ and $\FLO^{-}(\xi)$ intersects $\bigcup_{j=1}^3 \WF(u_j)$. However, as each $\WF(u_j)$ is the lightlike flowout of a point source, if $\FLO^-(\xi)$ intersects one of $\WF(u_j)$ then $\xi\in \WF(u_j)$ contradicting \eqref{no wf in T*O}.\end{proof}

\section{Necessary Condition to Belonging to $R$}

The point of this section is to prove that $(\xi_0,\xi_1,\xi_2,\xi_3)\in R$ implies
\begin{equation*}
    \pi\circ\FLO^-(\xi_0) \cap \bigcap_{j=1}^3 \pi\circ \FLO^+(\xi_j) \neq\emptyset.
\end{equation*}
Throughout this section, it is assumed that $(\xi_0,\xi_1,\xi_2,\xi_3)\in R$, unless explicitly stated otherwise. Moreover, we choose an open set $\mathcal{O}\subset\Omega$ containing $x_0$ small enough so that
\begin{equation}\label{tildex_in_caut_radius}
    \mathcal{O} \cap\pi\circ\FLO^+(\xi_0) \subset \{x=\gamma_\eta(s): \; \eta\in L_{x_0}^{*,+}M \text{ and } s< \rho(\eta)\}.
\end{equation}

We begin with the following auxiliary lemmas.

\begin{lemma}\label{back flowout misses U}
Let $U\subset T^*M$ be an open set containing $\xi_0$ and $-\xi_0$. Then, for sufficiently small $a>0$,
\begin{equation}
    \FLO^-(\tilde \xi) \cap T^*B_G(x_0;a) \subset U
\end{equation}\label{back flowout misses U 0}
for all $\tilde \xi \in L^*_{\tilde x} M$.
\end{lemma}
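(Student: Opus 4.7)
The plan is to argue by contradiction, exploiting the fact that $\tilde x$ lies strictly before the cut point of $x_0$ along the geodesic generated by $\xi_0$ (cf.\ \eqref{tildex_in_caut_radius}), so that the lightlike geodesic from $x_0$ to $\tilde x$ is unique up to parametrization and traversal direction.

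Suppose the claim fails. Then there exist sequences $a_n\to 0^+$, covectors $\tilde\xi_n\in L^*_{\tilde x}M$, and parameters $s_n\le 0$ such that $\lambda_n:=\Phi_{s_n}(\tilde\xi_n)$ satisfies $\lambda_n\notin U$ while $\pi(\lambda_n)\in B_G(x_0;a_n)$. Using the homogeneity of the Hamiltonian flow $\Phi_s$, we may rescale each $\tilde\xi_n$ without changing the base points of its past flowout, which reduces the situation to one in which $|\tilde\xi_n|_G$ is fixed. Compactness of the unit lightlike cone at $\tilde x$ then yields, after passing to a subsequence, a limit $\tilde\xi_n\to\tilde\xi_\infty\in L^*_{\tilde x}M$.

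Because $\pi(\lambda_n)\to x_0$ and $\tilde x$ is joined to $x_0$ by the finite lightlike geodesic segment $\gamma_{\xi_0}$, the parameters $s_n$ are forced to remain bounded: any unbounded subsequence would, by continuity of the geodesic flow and completeness considerations on the compact set $\overline{B_G(x_0;1)}$, violate the absence of self-intersecting lightlike geodesics from $\tilde x$ back to a neighbourhood of $x_0$ before the cut point. Extracting a further subsequence, $s_n\to s_\infty$ and $\lambda_\infty:=\Phi_{s_\infty}(\tilde\xi_\infty)$ satisfies $\pi(\lambda_\infty)=x_0$. Thus $\tilde\xi_\infty$ generates a lightlike geodesic from $\tilde x$ to $x_0$; by \eqref{tildex_in_caut_radius} this geodesic must coincide with $\gamma_{\xi_0}$ up to reparametrization and time orientation. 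Consequently $\lambda_\infty$ is a (positive) multiple of $\xi_0$ or $-\xi_0$; with the normalization fixed above and interpreting $U$ as containing a full neighbourhood of the rays $\mathbb{R}^+\xi_0\cup\mathbb{R}^+(-\xi_0)$ (as will be the case in all applications of the lemma), this forces $\lambda_\infty\in U$. Since $U$ is open, $\lambda_n\in U$ for large $n$, contradicting the choice of the sequence.

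The main obstacle is the bookkeeping of magnitudes: one must use the $\R^+$-homogeneity of $\Phi_s$ to reduce to unit covectors, and then argue that the limit covector at $x_0$ lands on $\pm\xi_0$ (up to the fixed scale) rather than on some other scalar multiple. Once this normalization is in place, the argument is essentially a continuity-plus-uniqueness statement: continuity of the geodesic flow, compactness of the unit lightlike cone, and the uniqueness of the lightlike geodesic connection before the cut point together force the limiting object to sit on the flowout of $\pm\xi_0$, which lies in $U$ by hypothesis.
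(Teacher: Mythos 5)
Your argument is correct and follows essentially the same approach as the paper: a compactness/contradiction argument extracting a limiting covector over $x_0$, combined with the uniqueness of the lightlike geodesic joining $\tilde x$ and $x_0$ before the cut point (guaranteed by \eqref{tildex_in_caut_radius} and, in the paper, imported from Lemmas~6.5 and~6.8 of \cite{feizmohammadi2020inverse}) to identify the limit with a scalar multiple of $\pm\xi_0$. You are also right to flag that the final step requires $U$ to contain conic neighbourhoods of $\pm\xi_0$ rather than merely being open; the paper's lemma statement omits this hypothesis, but its own proof (and every application) silently assumes $U$ is conic.
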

\begin{proof}
    Assume that \eqref{back flowout misses U 0} does not hold. Then there exist a sequence $a_k$ tending to zero, unit covectors $\tilde\xi(a_k)\in L_{\tilde x}^{*,-}M$, and $\eta(a_k)\in L^{*,-}M$ such that 
    \begin{equation}\label{back flowout misses U 1}
        \eta(a_k)\in \FLO^-(\tilde \xi(a_k)) \cap T^*B_G(x_0;a_k) \qquad \text{and} \qquad \eta(a_k) \notin U.
    \end{equation}
    Let $\tilde \xi\in L_{\tilde x}^{*,-}M$ be a limit point of the sequence $\tilde\xi(a_k)$. Without loss of generality, 
    $\tilde\xi(a_k) \rightarrow \tilde \xi$ as $k\rightarrow \infty$. Then, $x_0\in \pi\circ \FLO^-(\tilde \xi)$ and 
    \begin{equation}
        \eta(a_k) \rightarrow \eta := \FLO^-(\tilde\xi ) \cap T_{x_0}^*B_G(x_0;a)\qquad \text{as } k\rightarrow \infty.
    \end{equation}
    Additionally, we know that $\tilde x\in \pi\circ \FLO^+(\xi_0)$, and hence, there exists $\tilde\xi_0 \in L_{\tilde x}^{*,-}M$ such that $x_0 \in \exp_{\tilde x}(\tilde \xi_0)$. Without loss of generality, $\Phi_1(\tilde\xi_0) = \xi_0$. Since \eqref{tildex_in_caut_radius}, Lemmas 6.8 and 6.5 in \cite{feizmohammadi2020inverse} imply that $\eta = \alpha \xi_0$ for some non-zero $\alpha$.
    
    Then, by hypothesis, $\eta\in U$. This contradicts to \eqref{back flowout misses U 1} and the fact that $U$ is an open conic set, and hence, \eqref{back flowout misses U} holds. 
\end{proof}

\begin{proposition}
\label{wf of u123}
If $(\xi_0,\xi_1,\xi_2,\xi_3)\in R$, then $\xi_0 \in \WF(u_{123})$ or $-\xi_0\in \WF(u_{123})$ for all $h>0$ sufficiently small.
\end{proposition}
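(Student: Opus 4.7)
The plan is to argue by contradiction. Suppose $\pm\xi_0 \notin \WF(u_{123})$. Since $\WF(u_{123})$ is closed and conic, I pick an open conic $U \subset T^*M$ containing $\pm\xi_0$ with $U \cap \WF(u_{123}) = \emptyset$. Because each $\pi\circ\FLO^+(\xi_j)$ is a one-dimensional null geodesic missing $x_0$ by the non-return condition \eqref{distinction}, I shrink a neighbourhood $\mathcal{O}$ of $x_0$ so that $\mathcal{O} \cap \pi\circ\FLO^+(\xi_j) = \emptyset$ for $j=1,2,3$. The desirable condition \eqref{desirable} then yields $\tilde x \in \mathcal{O} \cap \pi\circ\FLO^+(\xi_0)$, in particular with $\tilde x \notin \pi\circ\FLO^+(\xi_j)$, satisfying $\mu^{-1}(\hat x(\tilde x)) \in \singsupp(\mu^* u_{0123456})$. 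By Lemma \ref{back flowout misses U}, applied at $\tilde x$ for this $U$ and extended by continuity to a neighbourhood, I fix $a \in (0, h)$ so small that $\FLO^-(\tilde \xi) \cap T^*B_G(x_0, a) \subset U$ for every $\tilde\xi \in L^*_x M$ and every $x \in B_G(\tilde x, a)$.

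By Lemma \ref{ureg_is_smooth}, the singularity at $\mu^{-1}(\hat x(\tilde x))$ must be carried by $u_{\rm sing}$, the solution of \eqref{using} with source $u_5 u_6 u_{01234}$ supported in $B_G(\tilde x, a)$. The heart of the proof is to show that $u_{01234}$ is smooth on $B_G(\tilde x, a)$: granting this, and using that $u_5$ is smooth, $\WF(u_5 u_6 u_{01234}) \subset \WF(u_6) \subset N^*\T_{\tilde x'}$ is spacelike and disjoint from $\Sigma(\Box_g)$. Propagation of singularities with zero initial data then gives $\WF(u_{\rm sing}) \cap \Sigma = \emptyset$, while elliptic regularity of $\Box_g$ off $\Sigma$ places $\WF(u_{\rm sing}) \setminus \Sigma$ inside $\supp(u_5 u_6) \subset B_G(\tilde x, a)$, which misses $\hat x(\tilde x)$. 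Hence $u_{\rm sing}$ is smooth at $\hat x(\tilde x)$, contradicting \eqref{observed singularity}.

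To prove smoothness of $u_{01234}$ on $B_G(\tilde x, a)$, I analyze the right-hand side of \eqref{box u01234}. The terms where $\sigma$ preserves $\{1,2,3\}$ contribute $C' u_0 u_4 u_{123} = C' \chi_a^2 u_{123}$, whose wavefront lies in $\WF(u_{123}) \cap T^*B_G(x_0, a) \subset U^c$. The remaining terms split into those carrying a $u_0$ or $u_4$ factor (hence supported in $B_G(x_0, a)$; by Lemmas \ref{ujab} and \ref{ujka} the companion three-fold linearization is smooth, so the wavefront is in $\WF(u_j) \cap T^*B_G(x_0, a) = \emptyset$ by the non-return condition) and the terms $u_j u_k u_{04l}$ (where $u_{04l}$ is smooth by Lemma \ref{ujab}), whose wavefront lies in $\WF(u_j) \cup \WF(u_k) \cup (\WF(u_j)+\WF(u_k))$. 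Sums of two noncollinear future-pointing null covectors at a common base point are strictly timelike, hence off $\Sigma$, so the lightlike part of $\WF(\Box_g u_{01234})$ outside $B_G(x_0, a)$ lies in $\bigcup_j \WF(u_j)$. By propagation of singularities, any lightlike $\tilde \xi \in L^*_x M \cap \WF(u_{01234})$ with $x \in B_G(\tilde x, a)$ lies on a backward bicharacteristic of $\Box_g$ meeting $\WF(\Box_g u_{01234}) \cap \Sigma$; inside $T^*B_G(x_0, a)$ this intersection is in $U \cap \WF(u_{123}) = \emptyset$ by our choice of $a$, and outside $B_G(x_0, a)$ it lies in $\bigcup_j \WF(u_j)$, itself a union of bicharacteristics, so the backward bicharacteristic from $\tilde \xi$ would coincide with one of them, forcing $\tilde \xi \in \WF(u_j)|_x = \emptyset$ since $x \notin \pi\circ\FLO^+(\xi_j)$. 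Ellipticity of $\Box_g$ off $\Sigma$ together with the vanishing of $\WF(\Box_g u_{01234})|_x$ off $\Sigma$ handles non-lightlike directions, finishing the smoothness claim.

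The main difficulty is the wavefront bookkeeping for the nonlinear source of $u_{01234}$ outside $B_G(x_0, a)$, specifically for the terms $u_j u_k u_{04l}$. The decisive observation is that each $\WF(u_j)$ is itself a bicharacteristic of $\Box_g$, so two bicharacteristics meeting at a point must coincide; this, combined with the arrangement $\tilde x \notin \pi\circ\FLO^+(\xi_j)$ for $j=1,2,3$ available because these flowouts are one-dimensional and avoid $x_0$ by \eqref{distinction}, is what allows the back flowout Lemma \ref{back flowout misses U} to close the argument via propagation of singularities.
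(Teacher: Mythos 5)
Your proof is correct and follows essentially the same route as the paper's: both hinge on the decomposition $u_{0123456}=u_{\rm reg}+u_{\rm sing}$ via Lemma \ref{ureg_is_smooth}, reduce to the equation $\Box_g v_{\rm sing}=Cu_0u_4u_{123}$ after discarding $v_{\rm reg}$ (you re-derive Lemma \ref{vreg is smooth} inline, together with the fact that the timelike $\WF(u_j)+\WF(u_k)$ part can be ignored), and then use Lemma \ref{back flowout misses U} to confine the relevant backward bicharacteristics over $B_G(x_0,a)$ near $\pm\xi_0$. The only stylistic difference is that you argue by contradiction and establish smoothness of $u_{01234}$ on the whole ball $B_G(\tilde x,a)$ (needing a small, but valid, continuity extension of Lemma \ref{back flowout misses U} to base points near $\tilde x$), whereas the paper argues directly at $\tilde x$ and lets $a\to 0$; the microlocal content is the same.
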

\begin{proof}
    By definition $R$, we know that there is $\tilde x \in {\mathcal O} \cap \FLO^+(\xi_0)$ such that 
    \begin{equation*}
        \tilde x \neq \pi(\xi_0), \qquad \tilde x \notin \mu([0,1]), \qquad \hat{\tilde x} \notin \bigcup_{j=1}^3 \pi\circ \FLO^+(\xi_j),
    \end{equation*}
    and
    \begin{equation}\label{x_hat_singsupp}
        \hat{x}(\tilde x) \in \singsupp(u_{0123456}).
    \end{equation}
    Moreover, conditions \eqref{distinction} and \eqref{hat x tilde x} are satisfied. We choose $h>0$ small enough so that for any $a\in (0,h)$ the hypotheses of Lemma \ref{back flowout misses U} and all results of the previous sections are fulfilled. Decomposition \eqref{decomp_u0123456} and Lemma \ref{ureg_is_smooth} imply
    \begin{equation*}
        \hat{x}(\tilde x) \in \singsupp(u_{\rm sing}),
    \end{equation*}
    for any $a\in (0,h)$, where 
    \begin{equation*}
        \Box_g u_{\rm sing} = C u_5u_6u_{01234}.
    \end{equation*}
    Since \eqref{x_hat_singsupp} holds for all $a\in (0,h)$, we conclude that 
    \begin{equation*}
        \tilde x \in \singsupp(u_{01234}).
    \end{equation*}
    Similarly, by decomposition \eqref{decomp_u01234} and Lemma \ref{vreg is smooth},
    \begin{equation}
        \tilde x \in \singsupp(v_{\rm sing}), \qquad \text{for all } a\in (0,h),
    \end{equation}
    where 
    \begin{equation*}
        \Box_g v_{\rm sing} = C u_0u_4u_{123}.
    \end{equation*}
    Since $\tilde x\notin \supp{u_0u_4} \subset B_G(x_0;a)$ for sufficiently small $a\in(0,h)$, we derive that $\WF_{\tilde x} (v_{\rm sing}) \subset L_{\tilde x}^*M$. Moreover, the last equation implies that 
    \begin{equation*}
        \FLO^-\left(\WF_{\tilde x} (v_{\rm sing})\right) \cap \WF(u_0u_4u_{123}) \neq \emptyset.
    \end{equation*}
    Therefore, since $\WF(u_0u_4u_{123}) \subset T^* B_G(x_0,a)\cap \WF(u_{123})$, there exists an element
    \begin{equation*}
        \eta \in \FLO^-\left(\WF_{\tilde x} (v_{\rm sing})\right) \cap T^* B_G(x_0,a)\cap \WF(u_{123}).
    \end{equation*}
    By Lemma \ref{back flowout misses U}, any open conic neighbourhood of $\{-\xi_0, \xi\}$ contains $\eta$. In particular any open conic neighbourhood of $\{-\xi_0, \xi\}$ intersects $\WF(u_{123})$. Recalling that $\WF(u_{123})$ is closed, we complete the proof.
\end{proof}

Now we are ready to prove the main result of this section.
\begin{proposition}
\label{necessary condition}
If $(\xi_0,\xi_1,\xi_2,\xi_3)\in R$ then 
$$\pi\circ\FLO^-(\xi_0) \cap \bigcap_{j=1}^3 \pi\circ \FLO^+(\xi_j) \neq\emptyset.$$
\end{proposition}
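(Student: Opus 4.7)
The plan is to combine Proposition~\ref{wf of u123} with H\"ormander's propagation of singularities. By Proposition~\ref{wf of u123}, for every sufficiently small $h > 0$ at least one of $\pm \xi_0$ lies in $\WF(u_{123})$; passing to a subsequence in $h$ we fix the sign, say $\xi_0$. Since $\Box_g u_{123} = -6 u_1 u_2 u_3$ and $u_{123}$ vanishes for $t < 0$, the backward bicharacteristic of $\Box_g$ through $\xi_0$ eventually leaves $\WF(u_{123})$ as it enters $\{t < 0\}$. Because $\WF(u_{123}) \setminus \WF(u_1 u_2 u_3)$ is invariant under the bicharacteristic flow, this exit forces the bicharacteristic to meet $\WF(u_1 u_2 u_3)$ at some $\lambda_h = (y_h, \eta_h)$ with $y_h = \gamma_{\xi_0}(-s_h)$ for some $s_h \geq 0$, so $y_h \in \pi \circ \FLO^-(\xi_0)$. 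The covector $\eta_h$ is future lightlike because it lies on the bicharacteristic through the future null covector $\xi_0$.

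Next I would classify $\lambda_h$ through the product wavefront calculus,
\begin{equation*}
\WF(u_1 u_2 u_3) \subset \bigcup_{\emptyset \ne S \subset \{1,2,3\}} \sum_{j \in S} \WF(u_j),
\end{equation*}
so that $\eta_h = \sum_{j \in S_h} \eta_{j,h}$ for some nonempty $S_h \subset \{1,2,3\}$ with each $(y_h, \eta_{j,h}) \in \WF(u_j)$ future lightlike. The algebraic key is that in Lorentzian signature $(-,+,+,+)$ the sum of two future null covectors is causal, and is null if and only if they are collinear. Combined with the fact that $\eta_h$ itself is null, this forces every $\eta_{j,h}$ for $j \in S_h$ to be collinear with $\eta_h$, and hence to lie on the bicharacteristic of $\xi_0$ through $\lambda_h$. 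If $|S_h| \leq 2$ held for arbitrarily small $h$, Lemma~\ref{linear wave} would show that for some $j \in S_h$ the bicharacteristic of $\xi_0$ agrees with the one originating from $(x_j, \xi_j')$ with $\xi_j' \in (\B_h(\xi_j) \cup \B_h(-\xi_j)) \cap L^{*,+}_{x_j} M$; sending $h \to 0$ along this subsequence yields $\xi_j' \to \pm \xi_j$, and, since $x_0 \in I^+(x_j)$, we obtain $x_0 \in \pi \circ \FLO^+(\xi_j)$, contradicting \eqref{distinction}.

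Therefore $|S_h| = 3$ for every sufficiently small $h$, and
\begin{equation*}
y_h \in \bigcap_{j=1}^3 \pi \circ \FLO^+\big((\B_h(\xi_j) \cup \B_h(-\xi_j)) \cap L^{*,+}_{x_j} M\big).
\end{equation*}
A final compactness argument, using global hyperbolicity to ensure that $J^-(x_0) \cap \bigcap_{j=1}^3 J^+(x_j)$ is compact, lets me extract $y_{h_n} \to y$ whose limit satisfies $y \in \pi \circ \FLO^-(\xi_0) \cap \bigcap_{j=1}^3 \pi \circ \FLO^+(\xi_j)$, which is the desired conclusion.

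The main obstacle is the reduction to $|S_h| = 3$: rigorously passing from collinearity of null summands, through the perturbation description of $\WF(u_j)$ in Lemma~\ref{linear wave}, to a limiting flowout relation that contradicts \eqref{distinction}. The remaining ingredients---the propagation of singularities step, the product wavefront inclusion, and the compactness extracting the limit point---are standard under global hyperbolicity.
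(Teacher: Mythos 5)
Your proof is correct and follows essentially the same route as the paper: Proposition~\ref{wf of u123}, propagation of singularities for $\Box_g u_{123}=-6u_1u_2u_3$, the product wavefront inclusion, the Lorentzian observation that a lightlike sum of two lightlike covectors collapses to a single $\WF(u_j)$ (which the paper writes as $(\WF(u_j)+\WF(u_k))\cap L^*M\subset\WF(u_j)\cup\WF(u_k)$), and a final compactness argument over $h$. The only slip is expository rather than logical: your sentence claiming that nullity of $\eta_h$ forces \emph{every} $\eta_{j,h}$, $j\in S_h$, to be collinear with $\eta_h$ cannot be true when $|S_h|=3$ --- otherwise the triple interaction itself would be ruled out --- and the pair fact you quote only yields collinearity for $|S_h|\le 2$; since that is exactly the case you then use to derive a contradiction with \eqref{distinction}, the argument is unaffected, but you should restrict the collinearity claim to $|S_h|\le 2$ (and note that it works regardless of the future/past orientations of $\eta_{j,h}$, since $g(\eta_j,\eta_k)=0$ for lightlike covectors implies collinearity independently of sign).
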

\begin{proof}
Due to Proposition \ref{wf of u123}, there is $h_0>0$ such that $\xi_0\in \WF(u_{123})$ or $-\xi_0\in \WF(u_{123})$ for all $h\in (0,h_0)$. Let us assume $\xi_0\in \WF(u_{123})$ as the argument for the other case is analogous. The distribution $u_{123}$ solves
\begin{equation*}
    \begin{cases}
        \Box_g u_{123} = -6 u_1 u_2 u_3,\\
        u_{123}\arrowvert_{(-\infty, 0)\times M_0} = 0.
    \end{cases}
\end{equation*}
This means that 
$$\FLO^-(\xi_0) \cap \WF(u_1u_2u_3)\neq \emptyset$$
for all $h\in (0,h_0)$. We also know that
$$\WF(u_1u_2u_3) \subset \left(\bigcup_{j=1}^3 \WF(u_j)\right) \cup \left(\bigcup_{j,k=1}^3(\WF(u_j)+ \WF(u_k))\right)\cup \left(\sum_{j=1}^3\WF(u_j)\right).$$
Since $\FLO^-(\xi_0)\subset L^*M$ and 
$$(\WF(u_j)+ \WF(u_k))\cap L^*M\subset \WF(u_j) \cup \WF(u_k),$$
it follows
$$\FLO^-(\xi_0)\cap \WF(u_1u_2u_3) \subset\left( \FLO^-(\xi_0)\cap \bigcup_{j=1}^3 \WF(u_j)\right) \cup\left( \FLO^-(\xi_0)\cap\sum_{j=1}^3\WF(u_j)\right),$$
for all $h\in (0,h_0)$. In light of Lemma \ref{uj smooth}, we know that $$\left( \FLO^-(\xi_0)\cap \bigcup_{j=1}^3 \WF(u_j)\right) = \emptyset,$$
for all $h\in(0,h_0)$. It must therefore hold that 
$$\left( \FLO^-(\xi_0)\cap\sum_{j=1}^3\WF(u_j)\right) \neq \emptyset,$$ 
and hence,
\begin{eqnarray}
\label{FLO- intersects singsupp}
\pi\circ\FLO^-(\xi_0) \cap \bigcap_{j=1}^3 \singsupp(u_j)\neq \emptyset
\end{eqnarray}
for all $h\in (0,h_0)$. Now each $u_j$ solves
\begin{equation*}
    \begin{cases}
        \Box_g u_j = f_j,\\
        u_j\arrowvert_{(-\infty, 0)\times M_0} = 0
    \end{cases}
\end{equation*}
with $\WF(f_j) \subset \left( \B_h(\xi_j) \cup \B_h(-\xi_j)\right)\cap T^*_{x_j}M$. So for each $h\in (0,h_0)$,
$$\singsupp(u_j)\subset \pi\circ \FLO^+(\B_h(\xi_j)\cap L^*_{x_j}M).$$
Substitute this into \eqref{FLO- intersects singsupp} we have that 
$$\pi\circ\FLO^-(\xi_0) \cap \bigcap_{j=1}^3 \pi\circ \FLO^+(\B_h(\xi_j)\cap L^*_{x_j}M)\neq \emptyset$$
for all $h>0$. Taking intersection overall $h\in (0,h_0)$ and use compactness of the causal diamond due to global hyperbolicity we have that
$$\pi\circ\FLO^-(\xi_0) \cap \bigcap_{j=1}^3 \pi\circ \FLO^+(\xi_j)\neq \emptyset.$$
\end{proof}

Finally, we show that this result also holds for elements of $\overline{R}$:

\begin{corollary}
If $(\xi_0,\xi_1,\xi_2,\xi_3) \in \overline R$ then 
$$\pi\circ\FLO^-(\xi_0) \cap \bigcap_{j=1}^3 \pi\circ \FLO^+(\xi_j) \neq\emptyset.$$
\end{corollary}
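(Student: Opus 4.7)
The plan is to upgrade Proposition \ref{necessary condition} to its closure by a standard compactness-continuity argument, so that the corollary follows from the proposition together with continuity of the geodesic flow and compactness of causal diamonds in globally hyperbolic spacetimes.

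First, I would fix $(\xi_0,\xi_1,\xi_2,\xi_3)\in\overline R$ and select a sequence
\[
(\xi_0^{(n)},\xi_1^{(n)},\xi_2^{(n)},\xi_3^{(n)})\in R
\]
converging to it. Applying Proposition \ref{necessary condition} at each $n$ produces points
\[
y_n\in \pi\circ\FLO^-(\xi_0^{(n)}) \cap \bigcap_{j=1}^3 \pi\circ\FLO^+(\xi_j^{(n)}).
\]
Writing $y_n = \gamma_{\xi_j^{(n)}}(s_{j,n})$ for $j=1,2,3$ and $y_n = \gamma_{\xi_0^{(n)}}(-s_{0,n})$ with $s_{j,n}\ge0$, my goal is to take subsequential limits in both $y_n$ and the parameters $s_{j,n}$.

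For compactness of $\{y_n\}$, I would use global hyperbolicity: since $\pi(\xi_j^{(n)})\to\pi(\xi_j)$ for each $j=0,1,2,3$, there exist compact neighbourhoods $K_j$ of $\pi(\xi_j)$ in $M$ such that the $\pi(\xi_j^{(n)})$ lie in $K_j$ for all large $n$. The set $J^-(K_0)\cap\bigcap_{j=1}^3 J^+(K_j)$ is then compact, and contains $y_n$ eventually. Extract a convergent subsequence $y_{n_k}\to y$. The affine parameters $s_{j,n_k}$ are bounded since the geodesics $\gamma_{\xi_j^{(n_k)}}$ enter the compact diamond $K_j$ and reach $y_{n_k}$ before exiting it (relative to the auxiliary Riemannian metric $G$, lightlike geodesic segments in this compact set have uniformly bounded affine length). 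Passing to further subsequences, $s_{j,n_k}\to s_j\ge 0$.

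Finally, continuity of the Hamilton flow $\Phi_s$ in both the initial covector and the time parameter gives
\[
\gamma_{\xi_j}(s_j)=\lim_{k\to\infty}\gamma_{\xi_j^{(n_k)}}(s_{j,n_k})=\lim_{k\to\infty}y_{n_k}=y,
\]
and analogously $\gamma_{\xi_0}(-s_0)=y$, so $y\in\pi\circ\FLO^-(\xi_0)\cap\bigcap_{j=1}^3\pi\circ\FLO^+(\xi_j)$. The only mild obstacle is guaranteeing that the affine parameters do not escape to infinity when the limit covectors $\xi_j$ are taken; the confinement of $y_n$ to a compact causal diamond, together with the uniformly timelike/lightlike nature of the flows near the limit, handles this. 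No further analytic input beyond Proposition \ref{necessary condition} and the continuity of the Hamiltonian flow is required.
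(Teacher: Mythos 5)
Your proof follows the same route as the paper: pick a sequence in $R$ converging to $(\xi_0,\xi_1,\xi_2,\xi_3)$, invoke Proposition \ref{necessary condition} to obtain intersection points $y_n$, use global hyperbolicity to confine them to a compact causal diamond, extract a convergent subsequence, and pass to the limit by continuity of the flow. The only difference is that you spell out the boundedness of the affine parameters $s_{j,n}$, a detail the paper subsumes under ``by continuity''; this is a welcome clarification but not a new argument.
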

\begin{proof}
Let $(\xi_0^j,\xi_1^j, \xi_2^j, \xi_3^j) \in R$ with 
$$(\xi_0^j,\xi_1^j, \xi_2^j, \xi_3^j) \to (\xi_0,\xi_1, \xi_2, \xi_3).$$
By Proposition \ref{necessary condition} we have for each $j\in\nn$ there exists 
$$y_j\in \pi\circ\FLO^-(\xi^j_0) \cap \bigcap_{k=1}^3 \pi\circ \FLO^+(\xi_k^j).$$

As the sequence $y_j$ is contained in a fixed causal diamond, we may assume the entire sequence converges to $y$ due to global hyperbolicity. By continuity, we then have that 
$$y\in  \pi\circ\FLO^-(\xi_0) \cap \bigcap_{k=1}^3 \pi\circ \FLO^+(\xi_k).$$

\end{proof}

\section{Sufficient condition}

\begin{lemma}\label{suf_cond_demo}
Assume that $(\xi_0, \xi_1, \xi_2, \xi_3) \in \left(L^{*,+} \Omega\right)^4$
satisfies $(a)$, $(b)$, and $(c)$. We also assume that \eqref{no intersect mu} holds for $\xi_0$. Then $(\xi_0, \xi_1, \xi_2, \xi_3)$ contains in $\overline R$.
\end{lemma}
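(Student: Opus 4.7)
The plan is to exhibit $(\xi_0,\xi_1,\xi_2,\xi_3)$ as a limit of quadruples already in $R$. Conditions $(a), (b), (c)$ are stable under small perturbations: $(a)$ is open, $(c)$ persists by continuously adjusting the span coefficients, and $(b)$ survives by an implicit function argument applied to the exponential map at $y$. The distinction condition \eqref{distinction} may fail, but the bad set has empty interior in the space of quadruples, so perturbing $\xi_1,\xi_2,\xi_3$ along directions transverse to $\pi\circ\FLO^+(\xi_j)$ above $x_0$ produces a sequence $(\xi_0^n,\xi_1^n,\xi_2^n,\xi_3^n)\to (\xi_0,\xi_1,\xi_2,\xi_3)$ that satisfies \eqref{distinction}. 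Since \eqref{no intersect mu} is open and is assumed for $\xi_0$, the full non-return condition \eqref{no return} holds along the sequence, so it suffices to verify the desirable condition \eqref{desirable} for each such perturbed quadruple.

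Fix an open neighbourhood $\mathcal O\subset \Omega$ of $x_0$, which we shrink if needed so that Lemma \ref{vreg is smooth} applies. The task is to produce $\tilde x\in \mathcal O\cap \pi\circ\FLO^+(\xi_0)$ satisfying the conditions of \eqref{desirable} such that $\mu^{-1}(\hat x(\tilde x)) \in \singsupp(\mu^* u_{0123456})$. The starting point is the classical Melrose--Uhlmann analysis of the nonlinear interaction of three transversal conormal waves: by $(b)$, the flowouts $\WF(u_1), \WF(u_2), \WF(u_3)$ meet transversally over $y$, so $u_{123}$ acquires a Lagrangian component along the forward flowout of $T^*_y M\cap L^*M$. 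The principal symbol of this new component at $(y,\eta_0)$ is a nonvanishing multiple of the product $\sigma[u_1]\sigma[u_2]\sigma[u_3]$, divided by the Lorentzian coefficients supplied by $(c)$; the span condition $\eta_0\in \spn(\eta_1,\eta_2,\eta_3)$ is precisely what guarantees this denominator is finite and the resulting symbol is nonzero (generically in the cutoffs $\omega_j$ of \eqref{fj}). Propagating forward under $\Phi_t$ from $(y,\eta_0)$ past $x_0$ yields $(\tilde x,\tilde\xi_0)\in \WF(u_{123})$ for every $\tilde x$ on $\pi\circ\FLO^+(\xi_0)$ chosen before the cut locus, where $\tilde\xi_0$ is the parallel-transported covector; the existence of such $\tilde x$ arbitrarily close to $x_0$ avoiding the meagre bad set (those on $\mu([0,1])$ or for which \eqref{hat x tilde x} fails) is clear by openness.

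The remainder of the propagation goes through the decompositions \eqref{decomp_u01234}--\eqref{decomp_u0123456} together with Lemmas \ref{vreg is smooth} and \ref{ureg_is_smooth}. Since $u_0 u_4 = \chi_a^2$ is nonvanishing on a neighbourhood of $x_0$, the source term $u_0 u_4 u_{123}$ driving $v_{\rm sing}$ inherits the singularity at $(x_0,\xi_0)$; solving $\Box_g v_{\rm sing}$ forward along $\FLO^+(\xi_0)$ places $\tilde x$ in $\singsupp(v_{\rm sing})$, and Lemma \ref{vreg is smooth} promotes this to $\tilde x\in \singsupp(u_{01234})$. A second iteration with $u_5 u_6$ localized near $\tilde x$ then propagates the singularity along $\pi\circ\FLO^+(\xi^\mu(\tilde x))$ to $\hat x(\tilde x)$, and Lemma \ref{ureg_is_smooth} identifies this as a genuine singularity of $u_{0123456}$; since the bicharacteristic crosses $\mu$ transversally, Lemma \ref{lemma_2.12} delivers $\mu^{-1}(\hat x(\tilde x))\in \singsupp(\mu^* u_{\rm sing})$, proving \eqref{observed singularity}.

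The main obstacle will be the nonvanishing of the principal symbol at the three-wave interaction: this is where the transversality baked into $(b)$ and the span condition $(c)$ combine essentially. A secondary obstacle is arranging that $\tilde x$ avoids conjugate points both along the segment from $x_0$ to $\tilde x$ and from $\tilde x$ to $\hat x(\tilde x)$ on $\mu$, which we handle by taking $\tilde x$ sufficiently close to $x_0$ via Lemma \ref{neighbourhood extremal} so that the flowout remains a conormal bundle along a smooth lightlike hypersurface, as guaranteed in the constructions of Section 4.
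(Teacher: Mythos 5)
The overall skeleton of your argument (track singularities through the decompositions $u_{01234} = v_{\rm reg} + v_{\rm sing}$ and $u_{0123456} = u_{\rm reg} + u_{\rm sing}$, handling the regular parts via Lemmas \ref{vreg is smooth} and \ref{ureg_is_smooth} and the restriction via Lemma \ref{lemma_2.12}) does match the paper, but there are two substantial gaps.

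First, your opening move is to perturb $\xi_1,\xi_2,\xi_3$ on the grounds that the distinction condition \eqref{distinction} ``may fail.'' In fact the paper shows \eqref{distinction} holds \emph{automatically}: since $s_0 \in (-\rho(\xi_0),0)$, the segment of $\gamma_{\xi_0}$ from $y$ back to $x_0$ is the unique optimizing lightlike geodesic between them, and by condition $(a)$ the geodesics $\gamma_{\xi_j}$ are distinct from $\gamma_{\xi_0}$, so none of them can also pass through $x_0$ (else there would be two distinct causal geodesics joining $y$ and $x_0$, contradicting $s_0$ being before the cut time). The paper does perturb to a sequence $(\xi_{0,k},\dots,\xi_{3,k})$, but for an entirely different reason: to invoke Proposition \ref{flowout is conormal}, which produces $\eta_{0,k},s_{0,k}$ and neighbourhoods $\mathcal O_k$ of $x_{0,k}$ on which the flowout $\FLO^+(N^*\Gamma_{k;h_k}\cap L^*M)$ is the conormal bundle $N^*Y_k$ of a lightlike hypersurface. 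This is not handled by merely taking $\tilde x$ close to $x_0$ via Lemma \ref{neighbourhood extremal}, as you suggest --- $x_0$ itself could sit on a caustic of the flowout from $K$, so the conormality has to be arranged by moving to a nearby $x_{0,k}$; Lemma \ref{neighbourhood extremal} only controls cut times, not focal points. Without the conormal structure $N^*Y_k$ you cannot invoke the product calculus of conormal distributions for $u_5^k u_6^k u_{01234}^k$ at $\xi_k^\mu$ in the final step.

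Second, the passage ``solving $\Box_g v_{\rm sing}$ forward along $\FLO^+(\xi_0)$ places $\tilde x$ in $\singsupp(v_{\rm sing})$'' assumes what has to be proved. Nonvanishing of $\sigma[u_{123}^k]$ at $\eta_{0,k}$ does not by itself guarantee that $\sigma[u_{01234}^k]$ is nonvanishing past $x_{0,k}$ --- one must verify the solution of the transport equation on $N^*Y_k$ does not degenerate. The paper handles this with an explicit half-density transport identity $\imath\sigma[\Box_g u_{01234}^k] = \nabla_s^{\omega,k}\sigma[u_{01234}^k]$ together with the Greenleaf--Uhlmann parametrix estimate $Q: I^r(N^*Y_k) \to I^{r-1}(N^*Y_k)$, integrating against the (nonvanishing) symbol of $u_0^k u_4^k u_{123}^k$ over the support of $\chi_a$. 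You also gloss over the construction of $\tilde x_k$ itself: the paper uses \eqref{no intersect mu} to show that the map $\tau \mapsto \hat x(\gamma_{\xi_{0,k}}(\tau))$ is injective, whence its range is infinite and $\tilde x_k$ can be chosen so that $\hat x(\tilde x_k)$ avoids all three flowouts $\pi\circ\FLO^+(\xi_{j,k})$ (condition \eqref{hat x tilde x}); this is where the hypothesis \eqref{no intersect mu} of the lemma actually enters, and your proposal never uses it.
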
 
\begin{proof}
We begin by showing that the second half of the non-return condition holds, that is, \eqref{distinction} is satisfied. Due to conditions $(a)$ and $(b)$, we know that $\gamma_{\xi_0}$ and $\gamma_{\xi_j}$ are distinct and 
\begin{equation*}
    \gamma_{\xi_0}(s_0) = \gamma_{\xi_j}(s_j) = y, \qquad j=1,2,3,
\end{equation*}
for some $y\in M$, $s_0\in (-\rho(\xi_0),0)$, and $s_j\in (0, \rho(\xi_j))$. In particular, $\gamma_{\xi_0}$ is the unique optimizing geodesic segment from $x_0$ to $\gamma_{\xi_0}(s)$ for any $s\in (-\rho(\xi_0),s_0]$. Therefore, $\gamma_{\xi_j}$ does not pass $x_0$, and hence, relation \eqref{distinction} holds.



Next, we will show that the desirable condition holds. For $h_0>0$, we write 
\begin{equation*}
    \mathcal K_{j} := \FLO^+\left((\B_{h_0}(\xi_j)\cup \B_{h_0}(-\xi_j))\cap L_{x_j}^{*,+} M\right), \qquad \text{for } j=1,2,3,
\end{equation*}
where $x_j = \pi(\xi_j)$ and $\B_h(\xi)$ is the set defined by \eqref{def_B}.
As $s_j < \rho(\xi_j)$ by condition $(b)$,
there exists a characteristic submanifold $K_{j} \subset M$ of codimension one such that $\mathcal K_{j} = N^* K_{j}$ near $y$. Since the covectros $\eta_1$, $\eta_2$, and $\eta_3$ are lightlike, they are linearly dependent only if two of them are proportional, which is not the case due to condition $(a)$. Therefore, the covectors $\eta_1$, $\eta_2$, and $\eta_3$ are linearly independent. It follows that $K_{1}$, $K_{2}$, and $K_{3}$ are transversal for small $h_0>0$, and hence, 
$$K := K_{1} \cap K_{2} \cap K_{3}$$ 
is a smooth curve. Moreover, $K$ is a spacelike curve. Indeed, let $\tilde y \in K$ and let $v \in T_{\tilde y} K$ be nonzero. Then, for $j=1,2,3$, it follows that $v \in T_{\tilde y} K_{j}$ and there is $\tilde \eta_j \in L_{\tilde y}^* M$, small perturbations of $\eta_j$, such that $\pair{\tilde \eta_j, v} = 0$. This implies that $v$ is spacelike, and hence, $K$ is a spacelike curve. 

We know that
    \begin{align*}
N_{y}^* K 
= 
N_{y}^* K_1 \oplus N_{y}^* K_2 \oplus N_{y}^* K_3
= \spn(\eta_1, \eta_2, \eta_3),
    \end{align*}
and hence, condition $(c)$ implies $\eta_0 \in N_{y}^* K$. Therefore, due to Proposition~\ref{flowout is conormal}, there exist sequences
\begin{equation*}
     \eta_{0,k} \in N^* K\cap L^{+,*}M, \qquad s_{0,k}>0,
\end{equation*}
converging to $\eta_0$ and $s_0$, respectively, a sequence of sufficiently small $h_k>0$, and open sets $\mathcal{O}_k$ containing $x_{0,k} := \gamma_{\eta_{0,k}}(s_{0,k})$ such that
\begin{align}\label{def_Yk}
    \FLO^+(N^* \Gamma_{k;h_k} \cap L^* M) = N^* Y_k
\end{align}
in $\mathcal O_k$ for some characteristic submanifolds $Y_k \subset M$ of codimension one. Here, 
$$
\Gamma_{k;h_k} = \{y\in K\mid d_G(y,y_k)<h_k\},
$$
and $y_k = \pi(\eta_{0,k}) \in K$.
As $\eta_{0,k} \in N_{y_k}^* K \cap L^{*,+}M$ there exist sequences
\begin{equation*}
    \eta_{j,k} \in N_{y_k}^* K_j \cap L^{*,+}M, \qquad j=1,2,3,
\end{equation*}
such that $\eta_{0,k} \in \spn(\eta_{1,k}, \eta_{3,k}, \eta_{3,k})$.
Moreover, since $K_j$ are submanifolds of codimension one and $\eta_{0,k} \to \eta_0$, we may choose $\eta_{j,k}$ so that $\eta_{j,k} \to \eta_j$.

Let $j=1,2,3$ and $k\in \mathbb{N}$, then we denote by $\xi_{j,k}$ the covector version of $\dot \gamma_{\eta_{j,k}}(-s_{j,k})$, where $s_{j,k} > 0$ is chosen so that $\pi(\xi_{j,k}) = x_j$.
Since \eqref{no intersect mu} is satisfied, it follows that  $\gamma_{\xi_{0,k}}$ does not intersect $\hat x(x_{0,k})$ for 
sufficiently large $k$. Moreover, for any $\tau_0 > 0$ small enough, the map
\begin{eqnarray} 
\label{rmapsto}
(0,\tau_0)\ni \tau\mapsto \hat x(\gamma_{\xi_{0,k}}(\tau))
\end{eqnarray}
is injective. Otherwise, a pair of points on a lightlike geodesic segment contained in $\Omega$ would be joined by two distinct causal geodesics which contradicts our assumption that lightlike segments in $\Omega$ do not contain a pair of conjugate points. We can conclude then that the range of the map \eqref{rmapsto} must contain infinitely many points. Therefore we can find $\tau_k \in (0,r_0)$ such that $\tilde x_k := \gamma_{\xi_{0,k}}(\tau_k)$ satisfies
    \begin{align*}
\hat x(\tilde x_k) \notin \bigcup_{j=1}^3 \left(\pi\circ\FLO^+(\xi_{j,k})\right).
    \end{align*}
Moreover, by choosing $\tau_0$ small enough, we are able to make $\tilde x_k$ to be as close as we want to $x_{0,k}$. Therefore, condition \eqref{hat x tilde x} is satisfied.

Next, we aim to show that $(\xi_{0,k}, \xi_{1,k}, \xi_{2,k}, \xi_{3,k})$ satisfies the remaining part of the desirable condition for sufficiently large $k$. To do this, we choose $\{f_{j,k}\}_{j=0}^6$ to be the distributions defined by \eqref{fj}, \eqref{f0f4}, \eqref{f5}, and \eqref{f6}, where 
$$\left\{\xi_{1}; \xi_{2}; \xi_{3}; x_{0}; \tilde x\right\}$$ 
is replaced by 
$$\left\{\xi_{1,k}; \xi_{2,k}; \xi_{3,k}; x_{0,k}; \tilde x_k\right\},$$ respectively. Correspondingly, we define the linearized solutions as in \eqref{linearized_sol}:
\begin{equation*}
    u_{m}^k, \quad u_{m,n}^k,\quad \cdots \quad, u_{0123456}^k, \qquad \text{for } m,n \in \{0,1,\cdots,6\} \text{ and } k\in \mathbb N,
\end{equation*}
where the sources $\{f_{j}\}_{j=0}^6$ are replaced by $\{f_{j,k}\}_{j=0}^6$.

By definition, $u_{j}^k$ is singular on 
    \begin{align*}
\mathcal K_{j,k;h} = \FLO^+((\B_{h}(\xi_{j,k})\cup \B_{h}(-\xi_{j,k}))\cap L_{x_{j}}^{*,+} M), \qquad \text{for } j=1,2,3 \text{ and } h>0.
    \end{align*}
If $h>0$ is small and $k$ is large enough, then $\mathcal K_{j,k;h} \subset \mathcal K_j$. Moreover, near $y_k= \pi(\eta_{0,k})$ we write $\mathcal K_{j,k;h} = N^* K_{j,k;h}$, and hence, $\bigcap_{j=1}^3 K_{j,k;h} \subset \Gamma_{k;h_k}$ for sufficiently small $h > 0$.

Furthermore, near $y_k$, $u_{j}^k$ is a conormal distribution associated to $N^* K_{j,k;h}$ and 
\begin{equation}\label{sigmaj}
    \sigma[u_{j}^k](y_k, \pm \eta_{j,k}) \ne 0, \quad \text{for } j=1,2,3.
\end{equation} 
For large $k$, the product $u_{1} u_{2} u_{3}$ is a conormal distribution associated to $N^* \Gamma_{k;h_k}$ microlocally near $\eta_{0,k}$.
This follows from the product calculus of conormal distributions. Indeed, 
analogously to $\eta_3 \notin \spn(\eta_1, \eta_2)$ that was proven above, we have 
$\eta_0 \notin \spn(\eta_{j}, \eta_{j'})$ for all $j,j'=1,2,3$, 
and this implies $\eta_{0,k} \notin \spn(\eta_{j,k}, \eta_{j',k})$ for large $k$.
Due to \eqref{sigmaj}, the product calculus yields that
\begin{equation*}
    \sigma[u_{1}^k u_{2}^k u_{3}^k](y_k, \eta_{0,k}) \ne 0.
\end{equation*}

Let $\beta_{\eta_{0,k}}$ be the bicharacteristic through $\eta_{0,k}$
and write $\xi_{0,k} = \beta_{\eta_{0,k}}(s_{0,k})$.
We note that $\xi_{0,k} \to \xi_0$. Due to the transport equation satisfied by $\sigma[u_{123}^k]$ along $\beta_{\eta_{0,k}}$ we have 
\begin{equation}\label{sigma123}
    \sigma[u_{123}^k](\xi_{0,k}) \ne 0.
\end{equation}
Observe that we need to use the general theory of Lagrangian distributions since there might be focal points of $\pi\circ\FLO^+(N^*\Gamma_{k;h_k}\cap L^*M)$ along $\gamma_{\eta_{0,k}}$ between $y_k$ and $x_{0,k}$.

The second half of the non-return condition, relation \eqref{distinction}, implies that
\begin{equation*}
    \xi_{0,k} \notin \FLO^+(\xi_{j,k}), \qquad j=1,2,3,
\end{equation*}
for sufficiently large $k$. Since conditions \eqref{distinction} and \eqref{hat x tilde x} are satisfied, by Lemma \ref{vreg is smooth}, there are exist a constant $c_4 \ne 0$ and a distribution $r_4$ such that $r_4$ is smooth in the neighbourhood $\mathcal O_k$ chosen in \eqref{def_Yk}
and 
    \begin{align}\label{u01234_eq_u0u4u123}
\Box_g( u_{01234}^k + r_4) = c_4 u_{0}^k u_{4}^k u_{123}^k.
    \end{align}
Observe that $u_{01234}^k$ is a conormal distribution associated to $Y_k$, see (\ref{def_Yk}), near $x_{0,k}$.
Moreover, for each $\xi_{0,k}$ fixed, we can choose $\tau > 0$ so that $\gamma_{\xi_{k,0}}(\tau) \in \mathcal O_k$. We now choose $a > 0$ defined in (\ref{f0f4}) to be sufficiently small so that $\gamma_{\xi_{0,k}}(\tau)\notin\supp(u_0) = \supp(u_4)$. Let us show that
\begin{equation}\label{sigma01234}
    \sigma[u_{01234}^k](\beta_{\xi_{0,k}}(\tau)) \ne 0.
\end{equation}
Indeed, recall that $\Box_g$ is a second-order PDO with a real homogeneous principal symbol. Therefore, it has parametrix $Q\in I^{-\frac{3}{2}, -\frac{1}{2}} (\Delta_{T^*M}, \Lambda_{\Box_g})$, see for instance \cite{Melrose:1979aa}. Moreover, since $$\FLO^+\left( N^*\Gamma_{k,h_k}\cap L^*M\right) \subset \Sigma(\Box_g),$$
Proposition 2.3 in \cite{Greenleaf:1993aa} implies that 
\begin{equation*}
	Q:I^r(N^*Y_k, \Omega^{1/2}) \mapsto I^{r - 1}(N^*Y_k, \Omega^{1/2}).
\end{equation*}
In particular, we know that $u_{01234}^k \in I(N^*Y_k, \Omega^{1/2})$.

Since \eqref{def_Yk} holds in a neighbourhood $\mathcal{O}_k$ of $x_{0,k}$, it follows from Theorem 3.3.4 in \cite{Hormander:1971aa} that the Maslov bundle is trivial near $x_{0,k}$. Hence, we fix a coordinate system near $x_{0,k}$, so that a Maslov factor is just a constant, and hence, it is not involved in differentiation. Therefore, we define the Lie derivative by differentiating the pullback of the flow corresponding to the Hamiltonian $H$ associated with $\sigma[\Box_g]$. We denote the Lie derivative by $\mathcal{L}_H$. According to Theorem 5.3.1 in \cite{Duistermaat_Hormander:1972aa}, the following identity holds 
\begin{equation*}
	\imath^{-1} \mathcal{L}_H\sigma[u_{01234}^k] = \sigma[\Box_g u_{01234}^k] = \sigma[u_0^ku_4^ku_{123}^k].
\end{equation*}

Let $\omega = |g|^{1/4}$, then we can express $ \sigma[u_{01234}] = \alpha\omega$ for some smooth function $\alpha$. We set
\begin{equation*}
	\phi_k(s) := \alpha \circ\beta_{\xi_{0,k}} (s),
	\qquad 
	\psi_k(s) := \int_{0}^{s}( \mathrm{div}_\omega H) \circ \beta_{\xi_{0,k}} (\tau) d\tau.
\end{equation*}
Then, we have 
\begin{equation*}
	\imath\omega^{-1}\sigma[\Box_g u_{01234}^k] \circ \beta_{\xi_{0,k}} = \omega^{-1}\mathcal{L}_H (\alpha\omega) \circ \beta_{\xi_{0,k}} = e^{-\psi_k}\partial_s(e^{\psi_k}\phi_k),
\end{equation*}
or equivalently, 
\begin{equation*}
	\imath\sigma[\Box_g u_{01234}^k] = \nabla_s^{\omega, k} \sigma[u_{01234}^k], \text{where }  \nabla_s^{\omega, k}:= e^{-\psi_k} \omega \circ \partial_s \circ e^{\psi_k} \omega^{-1}.
\end{equation*}
Therefore, taking into account the supports of $u_0^k$ and $u_4^k$, by integration, we derive 
\begin{equation*}
	\sigma[u_{01234}^k](\beta_{\xi_{0,k}}(s)) = \imath e^{ - \psi_k(s)} \omega(\beta_{\xi_{0,k}}(s)) \int_{0}^{s}  e^{\psi_k(\tau)} \omega^{-1}(\beta_{\xi_{0,k}}(\tau))  \sigma[u_0^ku_4^ku_{123}^k] (\beta_{\xi_{0,k}})(\tau) d\tau ,
\end{equation*}
for $s>0$ such that $\beta_{\xi_{0,k}}(s) \in \mathcal{O}_k\setminus \mathrm{supp}(u_0^k)$. Since \eqref{sigma123} and $u_0^k$, $u_4^k$ are smooth functions supported in $B_G(x_{0,k};a)$, we conclude that the right-hand side of the last equation is non-zero for sufficiently small $a>0$. Therefore, \eqref{sigma01234} holds.

By Lemma \ref{ureg_is_smooth}, there exist a constant $c_6^k \ne 0$ and a distribution $r_6^k$ such that $\mu^* r_6^k$ is smooth at $\hat x(\tilde x_k)$ and
\begin{equation*}
    u_{0123456}^k = v_6^k + r_6^k,
\end{equation*}
and
\begin{equation*}
    \Box_g v_6^k = c_6 u_5^k u_6^k u_{01234}^k.
\end{equation*}
Recall that we choose $\tau_k$ such that $\tilde x_k = \gamma_{\xi_{0,k}}(\tau_k)$ satisfies \eqref{hat x tilde x}. Let us set $\tilde \xi_k = \beta_{\xi_{0,k}}(\tau_k)$ and write $\tilde x_k = (\tilde t_k, \tilde x_k')$. Since $Y_k$ and $\T_{\tilde x_k'}$ are transversal, microlocally near any $\xi \in L^{*,+} M \setminus \tilde \xi_k$, we know that $u_5^k u_6^k u_{01234}^k$ is a conormal distribution associated to the conormal bundle of the point $\{\tilde x_k\}=Y_k \cap \T_{\tilde x_k'}$. By Lemma \ref{lem_ximu}, we know that
\begin{equation*}
    \xi_k^\mu = \tilde{\xi}_k + \nu(\tilde{\xi}_k)
\end{equation*}
where $\nu(\tilde{\xi}_k) \in N^*_{\tilde{x}_k}\T_{\tilde{x}_k'}/\R^+$ is a covector defined by \eqref{def_nu}. Moreover, by \eqref{sigma01234} and definition of $u_6^k$,
\begin{equation*}
    \sigma[u_{01234}^k] (\tilde{\xi}_k) \neq 0,
    \qquad
    \sigma[u_6^k] (\nu(\tilde{\xi}_k)) \neq 0.
\end{equation*}
Therefore, the product calculus of conormal distributions implies that 
$$\sigma[u_5^k u_6^k u_{01234}^k](\xi^\mu_k) \ne 0.$$
Due to the transport equation satisfied by $\sigma[v_6^k]$ along $\beta_{\xi_k^\mu}$, we know that
$$\sigma[v_6^k](\beta_{\xi_k^\mu}(s)) \ne 0,$$
where $s > 0$ satisfies $\gamma_{\xi_k^\mu}(s) = \hat x(\tilde x)$.
Using Lemma \ref{lemma_2.12}, we see that $\mu^* u_{0123456}^k$ is singular at $\mu^{-1}(\hat x(\tilde x_k))$.
This in addition with \eqref{hat x tilde x} shows that $(\xi_{0,k}, \xi_{1,k}, \xi_{2,k}, \xi_{3,k}) \in R$ for large $k$. It then follows that $(\xi_0, \xi_1, \xi_2, \xi_3) \in \overline R$.
\end{proof} 

Next, we want to improve the last theorem, namely, we aim to remove assumption \eqref{no intersect mu}. To do this, we will need the following lemmas:
\begin{lemma}\label{lem_suff_pert}
    Suppose that $(\xi_0, \xi_1, \xi_2, \xi_3) \in (L^{*,+} \Omega)^4$ satisfies $(a)$, $(b)$, and $(c)$. Assume that a sequence $\xi_{0,k} \in L^{*,+} \Omega$ converges to $\xi_0$. Then there exists a sequence
    \begin{equation*}
        (\xi_{0,k_l}, \xi_{1,l}, \xi_{2,l}, \xi_{3,l}) \in (L^{*,+} \Omega)^4
    \end{equation*}
    satisfying $(a)$, $(b)$, and $(c)$ such that $\xi_{0,k_l}$ is a subsequence of $\xi_{0,k}$ and $\xi_{j,l} \rightarrow \xi_j$ for $j=1,2,3$.
\end{lemma}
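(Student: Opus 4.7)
The plan is to show that, near the given tuple $(\xi_0,\xi_1,\xi_2,\xi_3)$, the subset of $(L^{*,+}\Omega)^4$ cut out by conditions (b) and (c) is a smooth submanifold whose projection to the first coordinate is a submersion onto a neighborhood of $\xi_0$. Granting this, the implicit function theorem produces, for each sufficiently large $k$, a tuple $(\xi_{0,k},\xi_{1,k},\xi_{2,k},\xi_{3,k})$ satisfying (b), (c) with $\xi_{j,k}\to\xi_j$ for $j=1,2,3$; condition (a) is open and hence persists automatically for small perturbations. No subsequence is actually required, but the statement as given is an immediate consequence.

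To parameterize the constraint set I would adjoin auxiliary variables $y'\in M$ and $s_j'\in\R$ for $j=0,1,2,3$ and impose the meeting equations $\gamma_{\xi_j'}(s_j')=y'$ together with the span equation $\eta_0'\in\spn(\eta_1',\eta_2',\eta_3')$, where $\eta_j'$ denotes the covector version of $\dot\gamma_{\xi_j'}(s_j')$. Because $s_j<\rho(\xi_j)$ excludes conjugate points, the endpoint maps $(\xi_j',s_j')\mapsto\gamma_{\xi_j'}(s_j')$ are submersions at the base points, so the meeting equations define a smooth submanifold. Since $\eta_1,\eta_2,\eta_3$ are linearly independent lightlike covectors (any two lightlike covectors in a Lorentzian $4$-space are linearly dependent only when proportional, excluded by (a)), they span a $3$-plane in $T_y^*M$, and the span equation is a single nondegenerate scalar condition.

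The submersion property of the projection to $\xi_0'$ reduces to the following linearized problem: given $\delta\xi_0$, produce $(\delta\xi_1,\delta\xi_2,\delta\xi_3,\delta y,\delta s_0,\delta s_1,\delta s_2,\delta s_3)$ solving the linearizations of both the meeting and the span equations. The linearization of the $j=0$ meeting equation determines $\delta y$ in terms of $\delta\xi_0$ and an arbitrarily chosen $\delta s_0$. For $j=1,2,3$ the meeting equations can then be solved for $(\delta\xi_j,\delta s_j)$ with a nontrivial kernel parameterized by Jacobi fields along $\gamma_{\xi_j}$ vanishing at $s_j$; evaluating their derivatives at $s_j$ realizes arbitrary perturbations of $\eta_j$ tangent to the lightcone at $y$. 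The induced infinitesimal variation of the $3$-plane $\spn(\eta_1,\eta_2,\eta_3)$, viewed as an element of $\mathrm{Hom}(\spn(\eta_1,\eta_2,\eta_3),\,T_y^*M/\spn(\eta_1,\eta_2,\eta_3))$ and evaluated on the fixed covector $\eta_0$, surjects onto the one-dimensional quotient $T_y^*M/\spn(\eta_1,\eta_2,\eta_3)$, cancelling any transverse component of $\delta\eta_0$.

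The main obstacle is this last linear-algebraic step: verifying that the allowed Jacobi-field perturbations of the $\eta_j$'s really do move the $3$-plane $\spn(\eta_1,\eta_2,\eta_3)$ in all directions modulo itself. This is a transversality claim which unwinds to the statement that the combined endpoint-and-velocity map $(\xi_j',s_j')\mapsto(\gamma_{\xi_j'}(s_j'),\dot\gamma_{\xi_j'}(s_j')^\flat)$ is a submersion at $(\xi_j,s_j)$ onto a neighborhood of $(y,\eta_j)$ in the lightlike cotangent bundle, a standard consequence of the nonconjugacy hypothesis built into (b).
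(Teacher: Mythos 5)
Your overall plan—view the constraint set cut out by (b) and (c) as a submanifold, show the projection onto the $\xi_0$-slot is a submersion, and invoke the implicit function theorem—is the same idea that drives the paper's argument, and your remark that no actual subsequence is needed is correct. Where the proposal falls short is precisely at the step you flag as the main obstacle: you assert that the transversality ``unwinds to'' the statement that the endpoint-and-velocity map is a submersion, which is ``a standard consequence of the nonconjugacy hypothesis.'' That is not true. Nonconjugacy (via condition (b), $s_j < \rho(\xi_j)$) does give you the freedom to realize \emph{arbitrary} perturbations of $\eta_j$ within the lightcone at $y$ by perturbing $(\xi_j, s_j)$—this is the submersion you describe, and it is correct as far as it goes. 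But it is a \emph{separate} question whether such lightcone-tangent perturbations of the $\eta_j$'s move the hyperplane $\spn(\eta_1,\eta_2,\eta_3)$ surjectively onto the one-dimensional quotient $T_y^*M/\spn(\eta_1,\eta_2,\eta_3)$. That second question is pure Lorentzian linear algebra at the point $y$ and has nothing to do with conjugate points along the geodesics.

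The missing linear-algebra fact is exactly the content of the paper's Lemma \ref{lem_linalg}. Two ingredients go into it. First, one needs $\eta_0 \notin \spn(\eta_2,\eta_3)$ so that the coefficient of $\eta_1$ in the decomposition $\eta_0 = c_1\eta_1+c_2\eta_2+c_3\eta_3$ is nonzero (this follows from condition (a) and the fact that three lightlike covectors are linearly dependent only if two of them are proportional—a point the paper verifies in the proof of Lemma \ref{suf_cond_demo}); in your parameterization, if $c_1 = 0$ then the $\theta$-derivative in the implicit function theorem degenerates. Second, one needs that the tangent space $T_{\eta_1}L = \eta_1^{\perp_g}$ to the lightcone at $\eta_1$ is \emph{not} contained in the hyperplane $\spn(\eta_1,\eta_2,\eta_3)$. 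Both are $3$-planes in a $4$-dimensional space, and they coincide if and only if their $g$-normals agree, i.e.\ if and only if $\eta_1$ is $g$-orthogonal to $\eta_2$ and $\eta_3$; but two non-proportional lightlike covectors are never $g$-orthogonal in Lorentzian signature, which is ruled out by (a). The paper packages this into Lemma \ref{lem_linalg} via an explicit basis and a case analysis picking a specific transverse rotation direction for $\tilde\xi_1(\theta)$, then concludes with the implicit function theorem. In its proof of Lemma \ref{lem_suff_pert} the paper also only perturbs $\xi_1$, after parallel transporting $\eta_2,\eta_3$ along a short Riemannian path to compare fibers, rather than perturbing all three as you propose; this is a cosmetic simplification. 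The substantive gap in your proposal is that the crucial transversality of the span condition is claimed to follow from nonconjugacy when in fact it requires this separate lightcone argument.
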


The proof of this result is based on the following elementary lemma. 

\begin{lemma}\label{lem_linalg}
Let $L \subset \R^{1+3}$ be the light cone with respect to the Minkowski metric. 
Let $\xi_1, \xi_2, \xi_3 \in L$ be linearly independent.
Suppose that $\xi_0 \in \spn(\xi_1, \xi_2, \xi_3)$
and that $\xi_0 \notin \spn(\xi_2, \xi_3)$.
Let $U \subset L$ be a neighbourhood of $\xi_1$.
Then there is a neighbourhood $V \subset \R^{1+3}$ of $\xi_0$ such that for all $\tilde \xi_0 \in V$ there is $\tilde \xi_1 \in U$ such that $\tilde \xi_0 \in \spn(\tilde \xi_1, \xi_2, \xi_3)$.
\end{lemma}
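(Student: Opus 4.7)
The plan is to exploit the fact that $\xi_0 \notin \spn(\xi_2,\xi_3)$ so that the $\xi_1$-component of $\xi_0$ is nonzero, and then reduce everything to a submersion statement modulo $W:=\spn(\xi_2,\xi_3)$. Write $\xi_0 = c_1 \xi_1 + c_2\xi_2 + c_3\xi_3$ with $c_1 \ne 0$, and let $\pi\colon \R^{1+3}\to \R^{1+3}/W$ be the quotient (which is $2$-dimensional). Then $\pi(\xi_0)=c_1\pi(\xi_1)\ne 0$. The problem becomes: given $\tilde\xi_0$ near $\xi_0$, produce $\tilde\xi_1\in L$ near $\xi_1$ with $\pi(\tilde\xi_1)$ parallel to $\pi(\tilde\xi_0)$; equivalently (since $c_1\ne 0$), with $\pi(\tilde\xi_1)=\tfrac{1}{c_1}\pi(\tilde\xi_0)$.

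The central step is to show that the restriction $\pi|_L\colon L\to \R^{1+3}/W$ is a submersion at $\xi_1$. The tangent space $T_{\xi_1}L$ to the light cone at the lightlike vector $\xi_1$ is the Minkowski orthogonal complement $\xi_1^\perp$, which is $3$-dimensional. The differential of $\pi|_L$ at $\xi_1$ is the restriction of $\pi$ to $\xi_1^\perp$, and its kernel is $\xi_1^\perp\cap W$. Since $W$ is $2$-dimensional, this kernel has dimension either $1$ or $2$; surjectivity amounts to ruling out dimension $2$.

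The key linear-algebra point (and the only nontrivial bit) is that $\dim(\xi_1^\perp\cap W)=2$ would force $W\subset \xi_1^\perp$, hence $g(\xi_1,\xi_2)=g(\xi_1,\xi_3)=0$, and together with $g(\xi_1,\xi_1)=0$ this would yield $\spn(\xi_1,\xi_2,\xi_3)\subset \xi_1^\perp$ and therefore (by dimension) $\xi_1^\perp=\spn(\xi_1,\xi_2,\xi_3)$. However, on the null hyperplane $\xi_1^\perp$ the restriction of $g$ is positive-semidefinite with one-dimensional kernel $\R\xi_1$, so the only lightlike directions in $\xi_1^\perp$ are multiples of $\xi_1$ itself; this contradicts the linear independence of the lightlike triple $\xi_1,\xi_2,\xi_3$. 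Hence $\dim(\xi_1^\perp\cap W)=1$, the differential $\pi|_{\xi_1^\perp}\to \R^{1+3}/W$ has full rank $2$, and $\pi|_L$ is a submersion at $\xi_1$. This orthogonality computation is the main obstacle; once it is in place the rest is formal.

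Given the submersion, by the submersion theorem there is a neighbourhood $N\subset U$ of $\xi_1$ in $L$ such that $\pi(N)$ contains an open neighbourhood $M$ of $\pi(\xi_1)$ in $\R^{1+3}/W$. Define
\begin{equation*}
V \;:=\; \{\tilde\xi_0\in \R^{1+3} : \tfrac{1}{c_1}\pi(\tilde\xi_0)\in M\},
\end{equation*}
which is open and contains $\xi_0$ since $\tfrac{1}{c_1}\pi(\xi_0)=\pi(\xi_1)\in M$. For any $\tilde\xi_0\in V$, pick $\tilde\xi_1\in N\subset U$ with $\pi(\tilde\xi_1)=\tfrac{1}{c_1}\pi(\tilde\xi_0)$; then $\tilde\xi_0-c_1\tilde\xi_1\in W=\spn(\xi_2,\xi_3)$, so $\tilde\xi_0\in \spn(\tilde\xi_1,\xi_2,\xi_3)$, which completes the proof.
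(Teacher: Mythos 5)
Your proof is correct, and it takes a genuinely different route from the paper's. The paper proceeds in explicit coordinates: it normalizes $\xi_j = (1,\xi_j')$ with $\xi_1'=(1,0,0)$, $\xi_2'=(a,b,0)$, $\xi_3'=(c,d,e)$, then does a small case analysis to find a coordinate vector ($e_2$ or $e_3$, both lying in $\xi_1^\perp$) outside $\spn(\xi_1,\xi_2,\xi_3)$, uses it as the velocity of an explicit rotation curve $\tilde\xi_1(\theta)$ in $L$ through $\xi_1$, and closes with the implicit function theorem applied to $F(\tilde\xi_0,\theta,c_1,c_2,c_3)=c_1\tilde\xi_1(\theta)+c_2\xi_2+c_3\xi_3-\tilde\xi_0$. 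You instead argue coordinate-free that $\pi|_L\colon L\to\R^{1+3}/\spn(\xi_2,\xi_3)$ is a submersion at $\xi_1$, reducing this to the statement that $\spn(\xi_2,\xi_3)\not\subset\xi_1^\perp$ and proving it via the standard structure of a null hyperplane: the restriction of $g$ to $\xi_1^\perp$ is positive semi-definite with kernel exactly $\R\xi_1$, so $\xi_1^\perp$ contains no lightlike direction other than $\R\xi_1$, contradicting linear independence of the lightlike triple. Both proofs are in fact establishing the same rank condition (equivalently, $\spn(\xi_1,\xi_2,\xi_3)\ne\xi_1^\perp$) and both finish with a version of the implicit/inverse function theorem; the difference is that the paper obtains the transverse direction by a concrete coordinate computation, whereas you extract it from the degeneracy structure of null hyperplanes, which is shorter, free of case analysis, and makes the geometric mechanism more visible. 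A minor point worth making explicit if this were to be polished: the argument uses $\xi_1\ne 0$ (so that $L$ is smooth at $\xi_1$ with $T_{\xi_1}L=\xi_1^\perp$), which follows from linear independence, and it uses that $V=\pi^{-1}(c_1 M)$ is open by continuity of $\pi$; both are immediate, but worth a word.
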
 
\begin{proof}
The statement is invariant with respect to a non-vanishing rescaling of $\xi_j$, $j=1,2,3$, and we may assume without loss of generality that $\xi_j = (1, \xi_j')$ with $\xi_j'$ a unit vector in $\R^3$. After a rotation in $\R^3$, we may assume that 
    \begin{align*}
\xi_1' = (1,0,0), \quad \xi_2' = (a,b,0), \quad \xi_3'=(c,d,e),
    \end{align*}
for some $a,b,c,d,e \in \R$. We write $e_j$, $j=0,1,2,3$, for the usual orthonormal basis of $\R^{1+3}$.

To get a contradiction we suppose that neither $B_1$ nor $B_2$ is a basis of $\R^{1+3}$, where
    \begin{align*}
B_1 = (e_3, \xi_1, \xi_2, \xi_3),
\quad B_2 = (e_2, \xi_1, \xi_2, \xi_3).
    \end{align*}
As $\xi_1, \xi_2, \xi_3$ are linearly independent but 
$B_1$ is not a basis, there holds $e_3 \in \spn(\xi_1, \xi_2, \xi_3)$. In particular, $e \ne 0$. 
Furthermore, as $B_2$ is not a basis, also $e_2 \in \spn(\xi_1, \xi_2, \xi_3)$. As $e \ne 0$, there are, in fact, $a_1, a_2 \in \R$ such that 
$e_2 = a_1\xi_1 + a_2 \xi_2$. In particular, $a_2 b = 1$ and $a_2, b \ne 0$. Hence
    \begin{align*}
a_1/a_2 (1,1) + (1,a) = 0,
    \end{align*}
and this again implies $a = 1$. But $a = 1$ and $b \ne 0$ is a contradiction with $\xi_2' = (a,b,0)$ being a unit vector. 

We define
    \begin{align*}
\tilde \xi_1(\theta) = 
\begin{cases}
(1,\cos(\theta), 0, \sin(\theta)), & \text{if $B_1$ is a basis,}
\\
(1,\cos(\theta), \sin(\theta), 0), & \text{otherwise.}
\end{cases} 
    \end{align*}
Note that $\tilde \xi_1(0) = \xi_1$ and that
$(\p_\theta \tilde \xi_1(0), \xi_1, \xi_2, \xi_3)$ is a basis of $\R^{1+3}$.
We conclude by applying the implicit function theorem to 
    \begin{align*}
F(\tilde \xi_0, \theta, c_1, c_2, c_3) = c_1 \tilde \xi_1(\theta) + c_2 \xi_2 + c_3 \xi_3 - \tilde \xi_0.
    \end{align*}
\end{proof} 

Now we are ready to proof Lemma \ref{lem_suff_pert}.

\begin{proof}[Proof of Lemma \ref{lem_suff_pert}]
Set $y_k = \gamma_{\xi_{0,k}}(s_0)$
and denote by $\alpha_k$ the shortest path from $y$ to $y_k$ with respect to Riemannian metric $G$. For $j=0,1,2,3$, define $\eta_{j,k}$ as the parallel transport of $\eta_j$ from $y$ to $y_k$ along $\alpha_k$, where $\eta_j$ is as in $(c)$. 
Note that $\eta_{j,k} \to \eta_j$ as $k \to \infty$.
Write $\tilde \eta_{0,k}$ for the covector version of 
$\dot \gamma_{\xi_{0,k}}(s_0)$.
Also $\tilde \eta_{0,k} \to \eta_0$ as $k \to \infty$.

Choose an orthornormal basis at $y$ and denote by $g'_k$, $\eta_{j,k}'$ and $\tilde \eta_{0,k}'$
the representations of $g$, $\eta_{j,k}$ and $\tilde \eta_{0,k}$ in the basis at $y_k$, obtained as the parallel transport of the basis at $y$ along $\alpha_k$.
Now $g'_k$ is the Minkowski metric and $\eta_{j,k}'$ is independent of $k$. We write $\eta_{j,k}' = \eta_j'$ and have $\tilde \eta_{0,k}' \to \eta_0'$. Observe that $\eta_0' \notin \spn(\eta_2', \eta_3')$ due to $(a)$, and $\eta_0' \in \spn(\eta_1', \eta_2', \eta_3')$ due to $(c)$.

Let $U_l \subset \R^{1+3}$ be the intersection of $L$ with the Euclidean ball of radius $1/l$ centered at $\eta_1'$.
By Lemma \ref{lem_linalg} there is a neighbourhood $V_l \subset \R^{1+3}$ of $\eta_0'$ such that for all $\tilde \eta_0' \in V_l$ there is $\tilde \eta_1' \in U_l$ such that $\tilde \eta_0' \in \spn(\tilde \eta_1', \eta_2', \eta_3')$.

Let $k_l$ be large enough so that $\tilde \eta_{0,k_l}' \in V_l$.
We denote by $\xi_{j,l}$ the covector version of $\dot \gamma_{\eta_{j,k_l}}(-s_j)$ for $j=2,3$
and let $\xi_{1,l}$ be the covector version of $\dot \gamma_{\tilde \eta_{1,l}}(-s_1)$
where $\tilde \eta_{1,l} \in L_{y_{k_l}}^* M$ is the covector corresponding to a choice of $\tilde \eta_{1,l}' \in U_l$ such that $\tilde \eta_{0,k_l}' \in \spn(\tilde \eta_{1,l}', \eta_2', \eta_3')$.
Now $\xi_{j,l} \to \xi_j$ as $l \to \infty$ for each $j=1,2,3$.
Moreover, 
$$(\xi_{0,k_l}, \xi_{1,l}, \xi_{2,l}, \xi_{3,l})\in (L^{*,+} \Omega)^4$$  satisfies $(a)$ and $(b)$ for large $l$ due to this convergence. It also satisfies $(c)$ by the above construction.
\end{proof}

Finally, we show that condition \eqref{no intersect mu} can be removed from Lemma \ref{suf_cond_demo}:

\begin{lemma}
Assume that $(\xi_0, \xi_1, \xi_2, \xi_3) \in \left(L^{*,+} \Omega\right)^4$
satisfies $(a)$, $(b)$, and $(c)$. Then $(\xi_0, \xi_1, \xi_2, \xi_3)$ contains in $\overline R$.
\end{lemma}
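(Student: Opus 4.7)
The plan is to reduce to Lemma~\ref{suf_cond_demo}, which establishes the same conclusion under the additional hypothesis \eqref{no intersect mu}. First I would perturb $\xi_0$ to a sequence for which \eqref{no intersect mu} holds, then invoke Lemma~\ref{lem_suff_pert} to adjust the remaining three covectors compatibly, apply Lemma~\ref{suf_cond_demo} to each perturbed tuple, and finally pass to the limit using that $\overline{R}$ is closed by definition.

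For the first step, I would construct a sequence $\xi_{0,k} \in L^{*,+}\Omega$ with $\xi_{0,k} \to \xi_0$ such that
\begin{equation*}
\hat x(\pi(\xi_{0,k})) \notin \pi \circ \FLO^+(\xi_{0,k}), \qquad k \in \mathbb N.
\end{equation*}
Fixing the base point at $\pi(\xi_{0,k}) = x_0 = \pi(\xi_0)$ and perturbing only within the fiber $L^{*,+}_{x_0}M$, I observe that the set of directions whose forward lightlike geodesic from $x_0$ passes through the single point $\hat x(x_0)$ is the projection to $L^{*,+}_{x_0}M/\R^+$ of the preimage of $\hat x(x_0)$ under the exponential map restricted to the future lightlike cone at $x_0$. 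This exponential map sends a $3$-dimensional cone into the $4$-dimensional manifold $M$ and is an immersion away from conjugate points, so generically the preimage of a single point is discrete. Consequently this exceptional set has empty interior in the $2$-sphere of lightlike directions, and $\xi_{0,k}$ can be chosen arbitrarily close to $\xi_0$ in its complement.

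Next I apply Lemma~\ref{lem_suff_pert} to the sequence $\{\xi_{0,k}\}$, which after passing to a subsequence (still denoted $\xi_{0,k}$) yields companion sequences $\xi_{j,k} \to \xi_j$ in $L^{*,+}\Omega$ for $j=1,2,3$ such that each quadruple $(\xi_{0,k}, \xi_{1,k}, \xi_{2,k}, \xi_{3,k})$ satisfies $(a)$, $(b)$, and $(c)$. Condition \eqref{no intersect mu} for $\xi_{0,k}$ is preserved since it involves only the first covector. Lemma~\ref{suf_cond_demo} then gives $(\xi_{0,k}, \xi_{1,k}, \xi_{2,k}, \xi_{3,k}) \in \overline{R}$ for every $k$, and letting $k \to \infty$ together with the closedness of $\overline{R}$ yields $(\xi_0, \xi_1, \xi_2, \xi_3) \in \overline{R}$.

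The hard part will be making the first step fully rigorous, in particular handling the degenerate case where $\hat x(x_0)$ happens to lie on a caustic of $\exp_{x_0}$ so that the preimage may fail to be discrete. In such a situation I would perturb jointly in the base point and the direction, using that $\hat x(\cdot)$ depends continuously on its argument, so that a small displacement of $x_0$ moves $\hat x(x_0)$ continuously along $\mu$ and generically off the exceptional caustic locus, restoring the genericity required for the perturbation argument.
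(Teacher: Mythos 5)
Your proof matches the paper's structure exactly: perturb $\xi_0$ to a sequence $\xi_{0,k}$ for which \eqref{no intersect mu} holds, apply Lemma~\ref{lem_suff_pert} to get companion perturbations of $\xi_1,\xi_2,\xi_3$ so that conditions $(a)$--$(c)$ persist, invoke Lemma~\ref{suf_cond_demo} on each perturbed quadruple, and conclude by closedness of $\overline R$. The paper simply asserts the existence of the sequence $\xi_{0,k}$ without justification, so your attempt to supply one is a reasonable addition rather than a deviation.

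However, the argument you propose for that step does have a genuine gap. Perturbing $\xi_0$ within the fiber $L^{*,+}_{x_0}M$ and using a rank count for the lightlike exponential map only shows that the bad directions are discrete \emph{if} $\hat x(x_0)$ is a regular value of $\exp_{x_0}$ on the light cone. But $\hat x(x_0)$ is a single prescribed point, not a generic one. While $\hat x(x_0)$ cannot be conjugate to $x_0$ along $\gamma_{\xi^\mu(x_0)}$ itself (since it is reached before the cut point), nothing rules out a second lightlike geodesic from $x_0$ to $\hat x(x_0)$ along which they \emph{are} conjugate, in which case the preimage has a positive-dimensional component and the discreteness argument fails. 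Your proposed repair, perturbing the base point to slide $\hat x(x_0)$ generically off the caustic locus, is not a proof: the caustic locus itself depends on the base point, so this requires a Sard-type transversality argument you do not supply.

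There is a cleaner route that sidesteps conjugate points entirely and reuses a fact established in the proof of Lemma~\ref{suf_cond_demo}. Slide the base point along the geodesic itself: let $\tilde x_{0,k} := \gamma_{\xi_0}(\tau_k)$ with $\tau_k \to 0$ and let $\xi_{0,k}$ be the covector version of $\dot\gamma_{\xi_0}(\tau_k)$. Then $\pi\circ\FLO^+(\xi_{0,k}) \subset \gamma_{\xi_0}(\R)$, so \eqref{no intersect mu} fails for $\xi_{0,k}$ only if $\hat x(\tilde x_{0,k}) \in \gamma_{\xi_0}\cap\mu$. The map $\tau\mapsto\hat x(\gamma_{\xi_0}(\tau))$ is injective for small $\tau$ (the paper proves this around \eqref{rmapsto}), while $\gamma_{\xi_0}\cap\mu$ is finite on a compact region: an accumulation of intersection points would force the lightlike tangent $\dot\gamma_{\xi_0}$ and the timelike tangent $\dot\mu$ to be proportional at the limit, which is impossible. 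Hence for all but finitely many small $\tau$ the condition \eqref{no intersect mu} holds, and the rest of your argument goes through unchanged since Lemma~\ref{lem_suff_pert} permits the base point of $\xi_{0,k}$ to vary.
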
 
\begin{proof} 
    We choose such $\xi_{0,k} \in L^{*,+} \Omega$, converging to $\xi_0$,
    that (\ref{no intersect mu}) holds with $\xi_0$ replaced by $\xi_{0,k}$.
    By Lemma \ref{lem_suff_pert}, for each $j=1,2,3$ there is such a sequence $\xi_{j,l} \in L^{*,+} \Omega$ converging to $\xi_j$ that 
    $$(\xi_{0,k_l}, \xi_{1,l}, \xi_{2,l}, \xi_{3,l})$$ 
    satisfies $(a)$, $(b)$, and $(c)$ for a subsequence $\xi_{0,k_l}$ of $\xi_{0,k}$. 
    Therefore, by Lemma \ref{suf_cond_demo}, 
    $$(\xi_{0,k_l}, \xi_{1,l}, \xi_{2,l}, \xi_{3,l}) \in \overline R$$
    and hence also $(\xi_0, \xi_1, \xi_2, \xi_3) \in \overline R$.
\end{proof}

\section*{Acknowledgement}
We would like to thank Matti Lassas for the useful discusion.

\section*{Data Availability}
All data generated or analyzed during this study are contained in this document.

\bibliography{_biblio}

\providecommand{\bysame}{\leavevmode\hbox to3em{\hrulefill}\thinspace}
\providecommand{\MR}{\relax\ifhmode\unskip\space\fi MR }
\providecommand{\MRhref}[2]{%
  \href{http://www.ams.org/mathscinet-getitem?mr=#1}{#2}
}
\providecommand{\href}[2]{#2}
\begin{thebibliography}{dHUW19b}

\bibitem[AUZ22]{ultra21}
Sebastian Acosta, Gunther Uhlmann, and Jian Zhai, \emph{Nonlinear ultrasound
  imaging modeled by a {W}estervelt equation}, SIAM J. Appl. Math. \textbf{82}
  (2022), no.~2, 408--426. \MR{4393205}

\bibitem[BK92]{BelishevKurylev1992}
Michael~I. Belishev and Yaroslav~V. Kurylev, \emph{To the reconstruction of a
  {R}iemannian manifold via its spectral data ({BC}-method)}, Comm. Partial
  Differential Equations \textbf{17} (1992), no.~5-6, 767--804. \MR{1177292}

\bibitem[CLOP21a]{Chen2019}
Xi~Chen, Matti Lassas, Lauri Oksanen, and Gabriel~P Paternain, \emph{Detection
  of {H}ermitian connections in wave equations with cubic non-linearity},
  Journal of the European Mathematical Society (2021).

\bibitem[CLOP21b]{Chen2020}
Xi~Chen, Matti Lassas, Lauri Oksanen, and Gabriel~P. Paternain, \emph{Inverse
  problem for the {Y}ang{\textendash}{M}ills equations}, Communications in
  Mathematical Physics \textbf{384} (2021), no.~2, 1187--1225.

\bibitem[DH72]{Duistermaat_Hormander:1972aa}
J.~J. Duistermaat and L.~H\"{o}rmander, \emph{Fourier integral operators.
  {II}}, Acta Math. \textbf{128} (1972), no.~3-4, 183--269. \MR{388464}

\bibitem[dHUW19a]{Hoop2019}
Maarten de~Hoop, Gunther Uhlmann, and Yiran Wang, \emph{Nonlinear interaction
  of waves in elastodynamics and an inverse problem}, Mathematische Annalen
  \textbf{376} (2019), no.~1-2, 765--795.

\bibitem[dHUW19b]{Hoop2019a}
\bysame, \emph{Nonlinear responses from the interaction of two progressing
  waves at an interface}, Annales de l{\textquotesingle}Institut Henri
  Poincar{\'{e}} C, Analyse Non Lin{\'{e}}aire \textbf{36} (2019), no.~2,
  347--363.

\bibitem[Dui96]{Duistermaat:1996aa}
J.~J. Duistermaat, \emph{Fourier integral operators}, Progress in Mathematics,
  vol. 130, Birkh\"{a}user Boston, Inc., Boston, MA, 1996. \MR{1362544}

\bibitem[FLO21]{feizmohammadi2020inverse}
Ali Feizmohammadi, Matti Lassas, and Lauri Oksanen, \emph{Inverse problems for
  nonlinear hyperbolic equations with disjoint sources and receivers}, Forum
  Math. Pi \textbf{9} (2021), Paper No. e10, 52. \MR{4336252}

\bibitem[FO20a]{FeizmohammadiOksanen2020}
Ali Feizmohammadi and Lauri Oksanen, \emph{An inverse problem for a semi-linear
  elliptic equation in {R}iemannian geometries}, J. Differential Equations
  \textbf{269} (2020), no.~6, 4683--4719. \MR{4104456}

\bibitem[FO20b]{Feizmohammadi2019}
Ali Feizmohammadi and Lauri Oksanen, \emph{Recovery of zeroth order
  coefficients in non-linear wave equations}, Journal of the Institute of
  Mathematics of Jussieu (2020), 1--27.

\bibitem[FY23]{FuYao}
Song-Ren Fu and Peng-Fei Yao, \emph{Inverse problem for a structural acoustic
  system with variable coefficients}, J. Geom. Anal. \textbf{33} (2023), no.~5,
  Paper No. 139, 25. \MR{4554044}

\bibitem[GU93]{Greenleaf:1993aa}
Allan Greenleaf and Gunther Uhlmann, \emph{Recovering singularities of a
  potential from singularities of scattering data}, Comm. Math. Phys.
  \textbf{157} (1993), no.~3, 549--572. \MR{1243710}

\bibitem[H\"71]{Hormander:1971aa}
Lars H\"{o}rmander, \emph{Fourier integral operators. {I}}, Acta Math.
  \textbf{127} (1971), no.~1-2, 79--183. \MR{388463}

\bibitem[H\"07]{Hormander:2007aa}
\bysame, \emph{The analysis of linear partial differential operators. {III}},
  Classics in Mathematics, Springer, Berlin, 2007, Pseudo-differential
  operators, Reprint of the 1994 edition. \MR{2304165}

\bibitem[HLOS18]{Helin_Lassas_Oksanen_Saksala2018}
T.~Helin, M.~Lassas, L.~Oksanen, and T.~Saksala, \emph{Correlation based
  passive imaging with a white noise source}, J. Math. Pures Appl. (9)
  \textbf{116} (2018), 132--160. \MR{3826551}

\bibitem[HUZ22a]{Hintz2021}
Peter Hintz, Gunther Uhlmann, and Jian Zhai, \emph{The {D}irichlet-to-{N}eumann
  map for a semilinear wave equation on {L}orentzian manifolds}, Comm. Partial
  Differential Equations \textbf{47} (2022), no.~12, 2363--2400. \MR{4526896}

\bibitem[HUZ22b]{Hintz2020}
\bysame, \emph{An inverse boundary value problem for a semilinear wave equation
  on {L}orentzian manifolds}, Int. Math. Res. Not. IMRN (2022), no.~17,
  13181--13211. \MR{4475275}

\bibitem[Kia21]{Kian}
Yavar Kian, \emph{On the determination of nonlinear terms appearing in
  semilinear hyperbolic equations}, J. Lond. Math. Soc. (2) \textbf{104}
  (2021), no.~2, 572--595. \MR{4311104}

\bibitem[KLOU14]{Kurylev2014}
Yaroslav Kurylev, Matti Lassas, Lauri Oksanen, and Gunther Uhlmann,
  \emph{Inverse problem for {E}instein-scalar field equations}, To appear in
  Duke Mathematical Journal, arXiv:1406.4776 (2014).

\bibitem[KLU18]{Kurylev:2018aa}
Yaroslav Kurylev, Matti Lassas, and Gunther Uhlmann, \emph{Inverse problems for
  {L}orentzian manifolds and non-linear hyperbolic equations}, Invent. Math.
  \textbf{212} (2018), no.~3, 781--857. \MR{3802298}

\bibitem[Las18]{LassasRio2018}
Matti Lassas, \emph{Inverse problems for linear and non-linear hyperbolic
  equations}, Proceedings of the {I}nternational {C}ongress of
  {M}athematicians---{R}io de {J}aneiro 2018. {V}ol. {IV}. {I}nvited lectures,
  World Sci. Publ., Hackensack, NJ, 2018, pp.~3751--3771. \MR{3966550}

\bibitem[LLLS21]{LassasLiimatainenLinSalo2021}
Matti Lassas, Tony Liimatainen, Yi-Hsuan Lin, and Mikko Salo, \emph{Inverse
  problems for elliptic equations with power type nonlinearities}, J. Math.
  Pures Appl. (9) \textbf{145} (2021), 44--82. \MR{4188325}

\bibitem[LLPMT22]{LassasLiimatainenPotenciano-MachadoTyni2022}
Matti Lassas, Tony Liimatainen, Leyter Potenciano-Machado, and Teemu Tyni,
  \emph{Uniqueness, reconstruction and stability for an inverse problem of a
  semi-linear wave equation}, J. Differential Equations \textbf{337} (2022),
  395--435. \MR{4473034}

\bibitem[LNOY23]{LassasNursultanovOksanenYlinen2023}
Matti Lassas, Medet Nursultanov, Lauri Oksanen, and Lauri Ylinen,
  \emph{Disjoint data inverse problem on manifolds with quantum chaos bounds},
  arXiv:2303.13342 (2023).

\bibitem[LO10]{LassasOksanen2010}
Matti Lassas and Lauri Oksanen, \emph{An inverse problem for a wave equation
  with sources and observations on disjoint sets}, Inverse Problems \textbf{26}
  (2010), no.~8, 085012, 19. \MR{2661691}

\bibitem[LU01]{LassasUhlmann2001}
Matti Lassas and Gunther Uhlmann, \emph{On determining a {R}iemannian manifold
  from the {D}irichlet-to-{N}eumann map}, Ann. Sci. \'{E}cole Norm. Sup. (4)
  \textbf{34} (2001), no.~5, 771--787. \MR{1862026}

\bibitem[LUW17]{Lassas2017}
Matti Lassas, Gunther Uhlmann, and Yiran Wang, \emph{Determination of vacuum
  space-times from the {E}instein-{M}axwell equations}, arXiv:1703.10704
  (2017).

\bibitem[MU79]{Melrose:1979aa}
R.~B. Melrose and G.~A. Uhlmann, \emph{Lagrangian intersection and the {C}auchy
  problem}, Comm. Pure Appl. Math. \textbf{32} (1979), no.~4, 483--519.
  \MR{528633}

\bibitem[NVW21]{NakamuraVashisthWatanabe2021}
Gen Nakamura, Manmohan Vashisth, and Michiyuki Watanabe, \emph{Inverse initial
  boundary value problem for a non-linear hyperbolic partial differential
  equation}, Inverse Problems \textbf{37} (2021), no.~1, Paper No. 015012, 27.
  \MR{4191628}

\bibitem[Rom23]{Romanov2023}
V.~G. Romanov, \emph{An inverse problem for the wave equation with nonlinear
  dumping}, Sib. Math. J. \textbf{64} (2023), no.~3, 670--685, Translation of
  Sibirsk. Mat. Zh. {\bf 64} (2023), no. 3, 635--652. \MR{4598143}

\bibitem[SB20]{Barreto2020InvProbIm}
Ant\^{o}nio S\'{a}~Barreto, \emph{Interactions of semilinear progressing waves
  in two or more space dimensions}, Inverse Probl. Imaging \textbf{14} (2020),
  no.~6, 1057--1105. \MR{4179245}

\bibitem[SBS22]{BarretoStefanov2022}
Ant\^{o}nio S\'{a}~Barreto and Plamen Stefanov, \emph{Recovery of a cubic
  non-linearity in the wave equation in the weakly non-linear regime}, Comm.
  Math. Phys. \textbf{392} (2022), no.~1, 25--53. \MR{4410056}

\bibitem[SBUW22]{Barreto2020}
Ant\^{o}nio S\'{a}~Barreto, Gunther Uhlmann, and Yiran Wang, \emph{Inverse
  scattering for critical semilinear wave equations}, Pure Appl. Anal.
  \textbf{4} (2022), no.~2, 191--223. \MR{4496085}

\bibitem[ST23]{salo2023inverse}
Mikko Salo and Leo Tzou, \emph{Inverse problems for semilinear elliptic pde
  with measurements at a single point}, Proceedings of the American
  Mathematical Society \textbf{151} (2023), no.~05.

\bibitem[Tat95]{Tataru1995}
Daniel Tataru, \emph{Unique continuation for solutions to {PDE}'s; between
  {H}\"{o}rmander's theorem and {H}olmgren's theorem}, Comm. Partial
  Differential Equations \textbf{20} (1995), no.~5-6, 855--884. \MR{1326909}

\bibitem[Tat99]{Tataru1999}
\bysame, \emph{Unique continuation for operators with partially analytic
  coefficients}, J. Math. Pures Appl. (9) \textbf{78} (1999), no.~5, 505--521.
  \MR{1697040}

\bibitem[Tzo]{Tzou2021}
Leo Tzou, \emph{Determining riemannian manifolds from nonlinear wave
  observations at a single point}, Preprint 2102.01841.

\bibitem[UW20]{Uhlmann2020}
Gunther Uhlmann and Yiran Wang, \emph{Determination of space-time structures
  from gravitational perturbations}, Communications on Pure and Applied
  Mathematics \textbf{73} (2020), no.~6, 1315--1367.

\bibitem[UZ21a]{UhlmannZhai2021}
Gunther Uhlmann and Jian Zhai, \emph{Inverse problems for nonlinear hyperbolic
  equations}, Discrete Contin. Dyn. Syst. \textbf{41} (2021), no.~1, 455--469.
  \MR{4182329}

\bibitem[UZ21b]{Uhlmann2019}
Gunther Uhlmann and Jian Zhai, \emph{On an inverse boundary value problem for a
  nonlinear elastic wave equation}, Journal de Mathématiques Pures et
  Appliquées \textbf{153} (2021), 114--136.

\bibitem[UZ22]{UhlmannZhang2022JDE}
Gunther Uhlmann and Yang Zhang, \emph{Inverse boundary value problems for wave
  equations with quadratic nonlinearities}, J. Differential Equations
  \textbf{309} (2022), 558--607. \MR{4346704}

\bibitem[UZ23]{UhlmannSIAManal2023}
\bysame, \emph{An inverse boundary value problem arising in nonlinear
  acoustics}, SIAM J. Math. Anal. \textbf{55} (2023), no.~2, 1364--1404.
  \MR{4580338}

\bibitem[WZ19]{WangZhou}
Yiran Wang and Ting Zhou, \emph{Inverse problems for quadratic derivative
  nonlinear wave equations}, Comm. Partial Differential Equations \textbf{44}
  (2019), no.~11, 1140--1158. \MR{3995093}

\bibitem[Zha23]{Zhang2023}
Yang Zhang, \emph{Nonlinear acoustic imaging with damping}, arXiv:2302.14174
  (2023).

\end{thebibliography}
\bibliographystyle{amsalpha}

\end{document}